\def\newaliasedtheorem#1[#2]#3{
  \newaliascnt{#1@alt}{#2}
  \newtheorem{#1}[#1@alt]{#3}
  \expandafter\newcommand\csname #1@altname\endcsname{#3}
}
\newsavebox{\measure@tikzpicture}
\numberwithin{equation}{section}
\newcommand{\A}{\mathcal{A}}
\newcommand{\weak}{\overset{*}{\rightharpoonup}}
\newcommand{\R}{\mathbb{R}}
\newcommand{\Z}{\mathbb{Z}}
\newcommand{\T}{\mathbb{T}}
\newcommand{\E}{\mathds{E}}
\newcommand{\N}{\mathbb{N}}
\newcommand{\D}{\mathbb{D}}
\newcommand{\eps}{\varepsilon}
\DeclareMathOperator{\Ker}{Ker}
\DeclareMathOperator{\cof}{cof}
\DeclareMathOperator{\rank}{rank}
\DeclareMathOperator{\Id}{Id}
\DeclareMathOperator{\dv}{div}
\DeclareMathOperator{\tr}{tr}
\DeclareMathOperator{\spt}{supp}
\DeclareMathOperator{\loc}{loc}
\DeclareMathOperator{\Sym}{Sym}
\DeclareMathOperator{\dist}{dist}
\theoremstyle{plain}
\newtheorem{Teo}{Theorem}[section]
\newtheorem{QUE}[Teo]{Question}
\newtheorem{prop}[Teo]{Proposition}
\newtheorem{Cor}[Teo]{Corollary}
\theoremstyle{definition}
\newtheorem{Def}[Teo]{Definition}
\theoremstyle{remark}
\newtheorem{rem}[Teo]{Remark}
\title{Fine properties of symmetric and positive matrix fields with bounded divergence}
\author{Luigi De Rosa}
\address{L.D.R.: Department Mathematik und Informatik, Universit\"at Basel, Spiegelgasse 1,
4051 Basel, Switzerland}
\email{luigi.derosa@unibas.ch}
\author{Riccardo Tione}
\address{R.T.: Max Planck Institute for Mathematics in the Sciences, Inselstrasse 22, 04103 Leipzig, Germany}
\email{riccardo.tione@mis.mpg.de}
\begin{document}

\begin{abstract}
This paper is concerned with various fine properties of the functional 
\[
\D(A) \doteq \int_{\T^n}{\det}^\frac{1}{n-1}(A(x))\,dx
\]
introduced in \cite{SER}. This functional is defined on $X_p$, which is the cone of matrix fields $A \in L^p(\T^n;\Sym^+(n))$ with $\dv(A)$ a bounded measure. We start by correcting a mistake we noted in our \cite[Corollary 7]{USC}, which concerns the upper semicontinuity of $\D(A)$ in $X_p$. We give a proof of a refined correct statement, and we will use it to study the behaviour of $\D(A)$ when $A \in X_\frac{n}{n-1}$, which is the critical integrability for $\D(A)$. One of our main results gives an explicit bound of the measure generated by $\D(A_k)$ for a sequence of such matrix fields $\{A_k\}_k$. In particular it allows us to characterize the upper semicontinuity of $\D(A)$ in the case $A \in X_\frac{n}{n - 1}$ in terms of the measure generated by the variation of $\{\dv A_k\}_k$.  We show by explicit  example that this characterization fails in $X_p$ if $p<\frac{n}{n-1}$.  As a by-product of our characterization we also recover and generalize a result of P.-L. Lions \cite{PLL3,PLL4} on the lack of compactness in the study of Sobolev embeddings. Furthermore, in analogy with Monge-Amp\`ere theory, we give sufficient conditions under which $\det^\frac{1}{n-1}(A) \in \mathcal{H}^1(\T^n)$ when $A \in X_\frac{n}{n - 1}$, generalising the celebrated result of S. M\"uller \cite{MULDETPOS} when $A=\cof D^2\varphi$, for a convex function $\varphi$.
\end{abstract}

\maketitle

\par
\medskip\noindent
\textbf{Keywords:} Matrix-fields, determinants, quasiconcavity, concentration phenomena, lack of compactness.
\par
\medskip\noindent
{\sc MSC (2020):  15B48, 39B42, 39B62.
\par
}

\par
\medskip\noindent

\section{Introduction}

Great interest has been given in recent years to the study of the following question. Consider a differential operator of order $k$ with constant coefficients of the form
\[
\mathcal{A} \doteq \sum_{|\alpha|= k} A_{\alpha}\partial_\alpha,\quad  \; A_\alpha \in \R^{N \times n},
\]
where
\[
\mathcal{A} : C^\infty_c(\R^m;\R^n) \to C^\infty_c(\R^m;\R^N).
\]
To every such operator we can associate the so-called \emph{wave cone}
\[
\Lambda_{\A} \doteq  \bigcup_{\xi \in \R^m\setminus\{0\}}\Ker(\mathbb{A}(\xi)) =  \{\eta \in \R^n: \exists \xi \in \R^m\setminus\{0\} \text{ s.t. } \mathbb{A} (\xi)(\eta) = 0\},
\]
where 
$$
\mathbb{A}(\xi)=(2\pi i)^k\sum_{|\alpha|=k} A_\alpha \xi^\alpha,
$$
with the usual multi-index notation $\xi^\alpha=\xi_1^{\alpha_1}\xi_2^{\alpha_2}\dots \xi_m^{\alpha_m}$. Deep results concerning measures $\mu$ satisfying $\A(\mu) = 0$ which highlight the importance of $\Lambda_\A$  were recently shown in \cite{GUIANN, RDHR}.  One of the key properties of $\Lambda_\A$ is its connection to the construction of irregular solutions to the system $\A(u) = 0$. Indeed, for every direction $\eta\in \Lambda_{\mathcal A}$, by definition there exists $\xi \in \R^m$ such that $\mathbb{A}(\xi)(\eta) = 0$. Then, given any Lipschitz profile $\varphi : \R \to \R$, the map $u(x) \doteq \varphi((x,\xi))\eta$ solves $\mathcal{A}(u) = 0$ in the sense of distributions, see for instance \cite[Sec. 4]{ST}.  Given the fact that we can choose \emph{any} profile $\varphi$, one cannot expect any kind of elliptic regularization for general solutions $u$ whose image is not disjoint from the wave cone. Thus, the question we are concerned with is whether maps $u \in L^1$ satisfying
\begin{equation}\label{distcone}
\A(u) = 0 \text{ and }\dist(u,\Lambda_\A) > 0
\end{equation}
enjoy better regularity properties, for instance $u \in L^{p}$ for $p > 1$ or at least $f(u) \in L^1$ for some superlinear function $f$. Examples of this phenomenon were found and used recently in \cite{HIGHER, GRS, CT}.  An interesting contribution to this question has been given in the seminal paper \cite{SER}, where D. Serre showed the following \emph{quasiconcavity inequality} for $A \in L^1(\T^n;\Sym^+(n))$ with $\dv A = 0$
 \begin{equation}\label{impineq}
\int_{\T^n}{\det}^\frac{1}{n-1}(A(x))\,dx \le {\det}^\frac{1}{n-1}\left(\int_{\T^n}A(x)\,dx\right),
\end{equation}
where we adopted the standard convention $(\dv A)_i=\partial_j A_{ij}$ in defining the row-wise divergence of a matrix field $A$. This result in particular implies that
\[
{\det}^\frac{1}{n-1}(A) \in L^1(\T^n).
\]
Since $\Lambda_{\dv} = \{M \in \R^{n\times n}: \rank(M) \le n -1\}$, we see that, if $\dist(A,\Lambda_{\dv}) \ge \eps|A|$, then
\[
|A|^\frac{n}{n-1} \le C(\eps){\det}^\frac{1}{n-1}(A) \in L^1,
\]
and hence \eqref{impineq} gives us another example of this improvement in integrability, which fits in the framework we described before.
\\
\\
Another consequence of the quasiconcavity inequality \eqref{impineq} is the upper semicontinuity of the functional 
\begin{equation}\label{Definition_main_functional}
\D(A) \doteq \int_{\T^n}\text{det}^\frac{1}{n-1}(A)\,dx,
\end{equation}
 as we showed in \cite{USC}.  The interest for the functional $\D$ stems from the fact that divergence-free matrix fields taking values in $\Sym^+(n)$ are ubiquitous in fluid dynamics and the calculus of variations, as explained in \cite{SER}. Its upper semicontinuity has recently been used in \cite{USC, SER20} for studying the multi-dimensional Burgers equation. From a more theoretical point of view, the quasiconcavity estimate \eqref{impineq} extends classical inequalities, such as the isoperimetric inequality and Sobolev inequalities. We show this connection in detail in Section \ref{addit}.\\
 
 Classically, instead of quasiconcavity, mathematicians studied \emph{quasiconvex} functionals, and the lower semicontinuity of the associated energy. In \cite{FM}, I. Fonseca and S. M{\"u}ller showed that energies of the form
\begin{equation}\label{energy}
\E_f(u) \doteq \int_{\T^n}f(u(x))\,dx
\end{equation} 
are weakly lower-semicontinuous on $L^q(\T^n;\R^N)\cap\ker(\mathcal{A})$, $p<q$, provided $\mathcal{A}$ satisfies Murat's constant rank condition (see \cite{FM} or \cite{MURCOM} for the definition), $f$ satisfies $|f(x)| \lesssim 1 + |x|^p$ and is $\mathcal{A}$-quasiconvex, i.e.
\begin{equation}\label{quasiconvexity_ineq}
f(A) \le \int_{\T^n}f(A + z(x))\,dx, \qquad \forall A \in \R^{N}, \forall z \in C^\infty(\T^n;\R^m) \text{ with } \mathcal{A}z = 0.
\end{equation}
For further recent developments, see \cite{AB,KR,DM,REL,WIE,GR} and references therein.
\\
\\
The higher integrability result and the upper (or lower) semicontinuity result can seem, at first, unrelated to each other, but this is not the case. Indeed, let us recall the strategy of \cite{FM} to show a weak upper semicontinuity result for $\E_f$ along a sequence $\{u_k\}_k \subset \Ker\A$ weakly converging to $u \in L^q$. Notice that, since
\[
|f(x)| \lesssim 1 + |x|^p, \quad p<q,
\] 
the sequence $\{f(u_k)\}_k$ weakly converges in $L^r$ for some $r > 1$ to a function $F$. Then the idea is, for a.e. $a \in \T^n$, to substitute the sequence $\{u_k\}_k$ with another sequence $\{v_k\}_k$ such that $v_k = u(a) + w_k$ with $w_k \rightharpoonup 0$ in $L^q$, and
\[
\int_{\T^n}f(u(a) + w_k(x))dx \to F(a).
\]
If we do so for a sequence $\{w_k\}_k\subset \Ker \mathcal{A}$, then we can use the quasiconcavity inequality, i.e. the concave counterpart of \eqref{quasiconvexity_ineq}, to obtain
\[
\int_{\T^n}f(u(a) + w_k(x))dx \le f(u(a)), \quad \forall k \in \N.
\]
Letting $k \to +\infty$, we obtain $F(a) \le f(u(a))$ for almost every $a\in\T^n$, which, upon integrating, gives us the required upper semicontinuity. In this process we have used crucially the fact that $\{f(u_k)\}_k$ converges weakly to $F$. If instead we had $q =p$, then $\{f(u_k)\}_k$ would simply be bounded in $L^1$. In this case, $\{f(u_k)\}_k$ might in principle create concentration phenomena in the limit which can prevent weak upper semicontinuity. However, as it has been recently noted in \cite{GR2}*{Thm.  4.8}, the strong convergence of $\{\mathcal{A}u_k\}_k$ prevents interior concentrations, and in turn implies weak upper semicontinuity in the critical case $p=q$. When $q<p$, this is hopeless, as we prove for instance in Proposition \ref{counter} below. More generally, if one knows that for suitable maps $\{u_k\}_k$ and $u$, for instance satisfying \eqref{distcone}, the sequence $\{f(u_k)\}_k$ does \emph{not} create concentration, then one may still wonder if upper semicontinuity of $\E_f$ still holds along $\{u_k\}_k$.
\\
\\
To summarize, we are interested in better integrability properties of maps satisfying \eqref{distcone} and weak upper semicontinuity of certain energies of the form \eqref{energy}. We can now start stating our results. Define $\Sym^+(n)$ to be the cone of  positive semi-definite matrices in the space of $n\times n$ symmetric matrices $\Sym(n)$. For $p \ge 1$, set
\[
X_p\doteq\{ A\in L^p(\T^n;\Sym^+(n)) \, : \, \dv A\in \mathcal{M}(\T^n;\R^n)\}.
\]
We will say that $A_k\rightharpoonup A$ in $X_p$ if
\[
A_k \rightharpoonup A \text{ in }L^p \text{ and } \dv(A_k) \overset{*}{\rightharpoonup} \dv(A),
\] 
where $\overset{*}{\rightharpoonup}$ denotes the usual weak-* convergence of measures as linear functionals on the space of continuous functions. See Section \ref{Sec:notations} below for the precise definition and for all the notations used here and throughout the paper. In what follows $\D=\D(\cdot)$ will always denote the functional \eqref{Definition_main_functional} defined above. As said, having the quasiconcavity inequality \eqref{impineq} means that one can hope for an upper semicontinuity result. This has been in fact shown in \cite{USC}, which we recall here.
\begin{Teo}[\cite{USC}*{Thm. 2}]\label{t:usc_supercritical}
Let $p > \frac{n}{n - 1}$ and $\{A_k\}_k\subset X_p$ be such that $A_k \rightharpoonup A$ in $X_p$. Then we have
\[
\limsup_{k\to \infty} \D (A_k)\leq \D(A).
\]
\end{Teo}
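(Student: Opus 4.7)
The plan is to extract a weak limit of the nonlinear quantity $\det^{1/(n-1)}(A_k)$ and control it pointwise by $\det^{1/(n-1)}(A)$ via a blow-up at Lebesgue points, where the desired bound will follow from Serre's quasiconcavity inequality \eqref{impineq} applied on small balls. The supercritical exponent $p>\frac{n}{n-1}$ is what prevents concentration of $\det^{1/(n-1)}(A_k)$: this is precisely the difference with the critical case $p=\frac{n}{n-1}$ addressed later in the paper.

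\paragraph{Step 1: equi-integrability.} Since $\det^{1/(n-1)}(M)\le C|M|^{n/(n-1)}$ on $\Sym^+(n)$, the sequence $\{\det^{1/(n-1)}(A_k)\}_k$ is bounded in $L^{p(n-1)/n}(\T^n)$ with $p(n-1)/n>1$. Up to a subsequence (along which $\limsup_k \D(A_k)$ is attained) it converges weakly in $L^{p(n-1)/n}$, hence in $L^1$, to some $F\ge 0$. It therefore suffices to show the pointwise bound $F(x_0)\le \det^{1/(n-1)}(A(x_0))$ for $\mathcal{L}^n$-a.e.\ $x_0\in\T^n$.

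\paragraph{Step 2: choice of a good point and good radii.} Fix $x_0\in\T^n$ which is simultaneously a Lebesgue point of $A$ and of $F$, and at which the $(n-1)$-dimensional upper density of the Radon measure $|\dv A|$ vanishes; this set has full Lebesgue measure. Choose $r_j\downarrow 0$ with $|\dv A|(\partial B_{r_j}(x_0))=0$, so that the weak-$*$ convergence $\dv A_k\weak \dv A$ upgrades, along a diagonal subsequence in $k$ and $j$, to
\[
|\dv A_k|(B_{r_j}(x_0))\;=\;o(r_j^{\,n-1}),
\]
which is the quantitative form in which the divergence will be negligible.

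\paragraph{Step 3: localization and divergence correction.} For each admissible $(k,j)$, produce a symmetric matrix field $\tilde A_{k,j}$ on the rescaled torus $r_j\T^n$ (identified with a neighborhood of $x_0$) by cutting off $A_k$ in an annulus near $\partial B_{r_j}(x_0)$ and subtracting an $\Sym(n)$-valued corrector $E_{k,j}$ that absorbs the residual divergence. Solving a Bogovski\u{i}-type problem for the corrector yields the scaling estimate
\[
\|E_{k,j}\|_{L^p(B_{r_j}(x_0))}\;\lesssim\; r_j^{\,1+n/p}\,\frac{|\dv A_k|(B_{2r_j}(x_0))}{r_j^{\,n}},
\]
which, combined with Step 2, is $o(r_j^{\,n/p})$. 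Since $\fint_{B_{r_j}(x_0)} A_k\to A(x_0)$ as first $k\to\infty$ and then $j\to\infty$, at generic $x_0$ where $A(x_0)$ is strictly positive definite the field $\tilde A_{k,j}$ stays in $\Sym^+(n)$ for large $k,j$. (At points where $\det A(x_0)=0$ the target inequality is trivial.)

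\paragraph{Step 4: conclusion via Serre.} Applying \eqref{impineq} to $\tilde A_{k,j}$ on $r_j\T^n$ gives
\[
\fint_{B_{r_j}(x_0)}{\det}^{1/(n-1)}(\tilde A_{k,j})\,dx\;\le\;{\det}^{1/(n-1)}\!\left(\fint_{B_{r_j}(x_0)}\tilde A_{k,j}\,dx\right).
\]
The $L^p$ control on $E_{k,j}$ together with the equi-integrability from Step 1 let us replace $\tilde A_{k,j}$ by $A_k$ in both sides modulo vanishing errors. Passing first $k\to\infty$ (using $\det^{1/(n-1)}(A_k)\rightharpoonup F$ and $A_k\rightharpoonup A$) and then $j\to\infty$ (using the Lebesgue property at $x_0$ and continuity of $\det^{1/(n-1)}$ on $\Sym^+(n)$) yields $F(x_0)\le\det^{1/(n-1)}(A(x_0))$, which after integration proves the theorem.

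\paragraph{Main obstacle.} The delicate point is Step 3: producing a divergence-free correction that stays in the PSD cone while being $L^p$-small. Both properties rely on the twofold use of $p>\frac{n}{n-1}$: the exponent ensures that the Bogovski\u{i} corrector beats the natural $L^{n/(n-1)}$-scaling of $|A_k|$, and it ensures the equi-integrability needed to pass to the limit in the nonlinear term. Any weaker integrability leaves room for the concentration phenomena which, as the later counterexample in the paper shows, genuinely obstruct upper semicontinuity.
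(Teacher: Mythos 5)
Your blueprint -- blow up at Lebesgue points of $A$, exploit $p>\tfrac{n}{n-1}$ to get a weak $L^1$ limit $F$ of $\det^{1/(n-1)}(A_k)$, and reduce everything to a pointwise bound $F(x_0)\le\det^{1/(n-1)}(A(x_0))$ obtained by running a quasiconcavity estimate on rescaled balls -- is the same as the paper's. The execution of Step~3, however, has a genuine gap that the paper's construction is specifically designed to avoid.

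The paper never modifies $A_k$ into a \emph{divergence-free} field. It sets
$B_{k,R}(x)=\varphi(x)\,A_k(a+Rx)+(1-\varphi(x))\,A(a)$,
a pointwise convex combination of two positive semidefinite matrices, so $B_{k,R}\in\Sym^+(n)$ for free. The nonzero divergence of $B_{k,R}$ is then fed into the Monge-Amp\`ere test-function argument (the same mechanism underlying Serre's proof of \eqref{impineq}), where it shows up as the terms $III_{j,m}$ and $IV_{j,m}$ in \eqref{IIII} and is estimated to vanish in the limit. You instead propose to subtract a symmetric Bogovski\u{\i} corrector $E_{k,j}$ and apply \eqref{impineq} verbatim to $\tilde A_{k,j}=(\text{cut-off of }A_k)-E_{k,j}$. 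The step that fails is the assertion that $\tilde A_{k,j}$ ``stays in $\Sym^+(n)$ for large $k,j$'': the only control you have on $E_{k,j}$ is an $L^p$ bound, and $L^p$ smallness gives no pointwise information whatsoever. Even after the harmless reduction to $A_k\ge\eps\Id_n$ (the paper's Subsection~\ref{pos}), and even if $A(x_0)$ is strictly positive definite, the corrector can and generically will push $\tilde A_{k,j}$ out of the PSD cone on a set of positive measure -- and then \eqref{impineq} simply does not apply. There is no choice of $p$ that saves this, because the obstruction is pointwise, not integrability. This is exactly why the paper keeps the cut-off as a \emph{convex combination} rather than a replacement, and why it never subtracts anything from $A_k$.

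A secondary imprecision in Step~2: the density condition must be imposed on $\nu=\text{w*-}\lim_k|\dv A_k|$, not on $|\dv A|$. Weak-$*$ convergence $\dv A_k\weak\dv A$ does not give $|\dv A_k|\weak|\dv A|$; in general $\nu\ge|\dv A|$ with strict inequality. The paper works with the Lebesgue decomposition $\nu=g\,dx+\nu^s$ and chooses $a$ a Lebesgue point of $g$ and a zero-$n$-density point of $\nu^s$, after which the estimate in its Step~5 produces the required $o(r^{n-1})$ without invoking vanishing $(n-1)$-density anywhere. This part is fixable, but the PSD issue in Step~3 is not, at least not along the Bogovski\u{\i} route as written.
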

The most general version of this upper semicontinuity result is in the following:
\begin{Cor}[\cite{USC}*{Cor. 7}]\label{tbc}
Fix $p\geq 1$. Let $\{A_k\}_{k\in \N}\subset X_p$ be such that $A_k\rightharpoonup A$ in $X_p$ and assume $\{\det^{\frac{1}{n - 1}}(A_k)\}_{k \in \N}$ is equi-integrable. Then
\[
\limsup_{k \to \infty}\D(A_k) \le \D(A).
\]
\end{Cor}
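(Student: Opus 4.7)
The plan is to combine the supercritical statement of Theorem~\ref{t:usc_supercritical} with a blow-up localization at Lebesgue points of $A$. By the Dunford--Pettis theorem applied to the equi-integrable family $\{\det^{1/(n-1)}(A_k)\}_k$, I would pass to a subsequence along which $\det^{1/(n-1)}(A_k) \rightharpoonup g$ weakly in $L^1(\T^n)$, for some nonnegative $g \in L^1(\T^n)$. Then $\lim_k \D(A_k) = \int_{\T^n} g\,dx$, and by Lebesgue differentiation of both $g$ and $A$, it is enough to prove the pointwise bound $g(x_0) \le \det^{1/(n-1)}(A(x_0))$ at almost every $x_0 \in \T^n$.

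Fix such a Lebesgue point $x_0$ (also demanding that $|\dv A|$ has no atom at $x_0$). For $r > 0$ small, I would rescale via $B^r_k(y) \doteq A_k(x_0 + r y)$ to a unit-side cube identified with $\T^n$, so that $\dv B^r_k = r\,(\dv A_k)(x_0 + r\cdot)$ has total variation of order $r\,\|\dv A_k\|_{\mathcal{M}}$, vanishing as $r\to 0$ uniformly in $k$. After modifying $B^r_k$ in a thin annulus near $\partial\T^n$ to obtain a periodic $\Sym^+(n)$-valued field $\tilde B^r_k$, one can invoke Serre's inequality \eqref{impineq} (after an additional mollification placing $\tilde B^r_k$ into $X_q$ for some $q > \frac{n}{n-1}$, so that Theorem~\ref{t:usc_supercritical} applies). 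Taking $k\to\infty$ and then $r\to 0$, using $\int_{\T^n}\tilde B^r_k \to A(x_0)$ by the Lebesgue point property together with continuity of $\det^{1/(n-1)}$ on $\Sym^+(n)$, yields $g(x_0) \le \det^{1/(n-1)}(A(x_0))$. Integration over $\T^n$ then gives the claim.

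The main obstacle is the boundary modification producing $\tilde B^r_k$ from $B^r_k$: it must preserve symmetry and positive semi-definiteness, keep the divergence controlled as a measure, and be small in $L^1$ uniformly in $k$. The equi-integrability hypothesis on $\{\det^{1/(n-1)}(A_k)\}_k$ is precisely what prevents $\det^{1/(n-1)}(A_k)$ from concentrating on the boundary annulus being modified, so that the correction does not alter $\D$ in the limit. Without this hypothesis, as the explicit example in Proposition~\ref{counter} shows, concentration along Lebesgue-negligible sets can genuinely destroy upper semicontinuity in $X_p$ for $p < \frac{n}{n-1}$, which explains why the equi-integrability assumption is both natural and necessary for this refined version of \cite{USC}*{Cor.~7}.
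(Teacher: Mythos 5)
Your high-level plan (Dunford--Pettis to pass to a weak $L^1$ limit $g$, reduce to the pointwise bound $g(x_0)\le \det^{1/(n-1)}(A(x_0))$ at Lebesgue points, then blow up) is the right skeleton, and the paper follows it too. However, the central technical claim in your blow-up step is false, and it is precisely the obstruction that makes the corollary non-trivial.

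You assert that, after the rescaling $B^r_k(y)=A_k(x_0+ry)$ to the unit cube, the total variation of $\dv B^r_k$ is of order $r\,\|\dv A_k\|_{\mathcal M}$ and hence vanishes as $r\to 0$ uniformly in $k$. The pointwise identity $\dv B^r_k(y)=r\,(\dv A_k)(x_0+ry)$ is correct, but its total variation over the unit cube carries the Jacobian factor $r^{-n}$ from the change of variables:
\[
\|\dv B^r_k\|_{\mathcal M([0,1]^n)} \;=\; r\,\int_{[0,1]^n}\!d|\dv A_k|(x_0+r\,\cdot)\;=\;\frac{1}{r^{n-1}}\,|\dv A_k|\big(Q_r(x_0)\big).
\]
For $n\ge 2$ this does \emph{not} go to zero; the crude bound $|\dv A_k|(Q_r(x_0))\le \|\dv A_k\|_{\mathcal M(\T^n)}$ gives $r^{1-n}\,\|\dv A_k\|_{\mathcal M}$, which diverges as $r\to 0$. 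Controlling the quantity $r^{1-n}|\dv A_k|(Q_r(x_0))$ is in fact exactly the delicate step in the paper's Step 5: one must first send $k\to\infty$ to replace $|\dv A_k|$ by the weak-$*$ limit $\nu = g\,dx + \nu^s$, and then use that $x_0$ is a Lebesgue point for $g$ and a density-zero point for $\nu^s$; even so, the factor one obtains goes to zero only as $r^{-n+1}\cdot r^n = r$ in the iterated limit, and this holds only for a.e.\ $x_0$, not uniformly. Since this quantity cannot be made small before blowing up, your plan to reduce to a nearly divergence-free field and invoke the quasiconcavity inequality \eqref{impineq} or Theorem~\ref{t:usc_supercritical} does not get off the ground: you would need to run the full Monge--Amp\`ere argument \emph{with} a non-negligible divergence term, estimate the resulting error, and show it vanishes in the iterated limit --- which is what the paper's proof of Theorem~\ref{introleb} does through the cutoff interpolation $B_{k,R}=\varphi\,A_k(a+R\cdot)+(1-\varphi)A(a)$ and the careful bookkeeping in Steps 2--7.

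Two further, lesser points. First, your claim that equi-integrability of $\{\det^{1/(n-1)}(A_k)\}_k$ ``prevents concentration on the boundary annulus'' does not actually control the blown-up boundary term: the rescaled integral $\int_{\text{annulus}}\det^{1/(n-1)}(A_k(x_0+ry))\,dy = r^{-n}\int_{x_0+r\,\text{annulus}}\det^{1/(n-1)}(A_k)\,dz$ re-introduces the $r^{-n}$ factor, so equi-integrability alone does not suffice. What equi-integrability really buys, as the paper uses it, is that the weak-$*$ limit measure $\mu$ of $\det^{1/(n-1)}(A_k)\,dx$ has no singular part, so that the bound $h\le \det^{1/(n-1)}(A)$ on its density $h$ already gives the full conclusion. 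Second, the constant blow-up target $A(x_0)$ is in general only positive \emph{semi}-definite, and the Monge--Amp\`ere machinery needs strict positivity; the paper handles this by the $\varepsilon\Id_n$-shift reduction in Subsection~\ref{pos}, which your sketch omits. The approach is salvageable but requires exactly the extra machinery the paper introduces; as stated the scaling mistake breaks the argument.
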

Unfortunately, as we shall explain in Section \ref{error}, the proof we gave of that corollary is wrong. This paper starts by correcting it with the following result, which is indeed an improved version of Corollary \ref{tbc}.
\begin{Teo}[Characterization of the Lebesgue part]\label{introleb}
Let $\{A_k\}_{k\in \N}\subset X_1$ be a sequence such that $A_k\rightharpoonup A$ in $X_1$ and $\det ^{\frac{1}{n-1}} (A_k)$ weakly-$*$ converges to a Radon measure $\mu=h\,dx+\mu^s$ with $\mu^s \perp \mathcal{L}^n$, where $\mathcal{L}^n$ is the Lebesgue measure. Then
\begin{equation}\label{hest}
h(x) \leq \left(\det A(x)\right)^\frac{1}{n-1},\quad \text{for a.e. }x\in \T^n.
\end{equation}
In particular, if $\mu \ll \mathcal{L}^n$ we have
$$
\limsup_{k\rightarrow \infty} \mathbb{D} (A_k)\leq \mathbb{D} (A).
$$
\end{Teo}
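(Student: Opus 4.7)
The plan is to read off the pointwise bound~\eqref{hest} from the Young-measure/Fonseca--M\"uller framework, once Serre's inequality~\eqref{impineq} is recognised as the $\dv$-quasiconcavity of $\det^{1/(n-1)}$ on $\Sym^+(n)$.

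\emph{Setting up the Young measure.} Since $A_k\rightharpoonup A$ in $L^1(\T^n;\Sym^+(n))$, Dunford--Pettis gives equi-integrability, so, up to a subsequence, $\{A_k\}$ generates a classical Young measure $\{\nu_x\}_{x\in\T^n}$ of probability measures on $\Sym^+(n)$ with barycenter $\int M\,d\nu_x(M)=A(x)$ a.e. The hypothesis $\dv A_k\overset{*}{\rightharpoonup}\dv A$ in $\mathcal{M}(\T^n;\R^n)$, combined with the compact embedding $\mathcal{M}(\T^n)\hookrightarrow W^{-1,p}(\T^n)$ for any $p<n/(n-1)$ (dual to Morrey's $W^{1,p'}\hookrightarrow C^0$), upgrades this to $\dv A_k\to \dv A$ \emph{strongly} in $W^{-1,p}$. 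By Fonseca--M\"uller's theory of $\mathcal{A}$-free Young measures applied to the constant-rank operator $\dv$, the shifted sequence $\{A_k-A\}$ generates a $\dv$-free Young measure; equivalently, for every $\dv$-quasiconcave $f$ with appropriate growth one has the Jensen-type inequality $\int f(M)\,d\nu_x(M)\le f(A(x))$ a.e. Since~\eqref{impineq} is exactly the $\dv$-quasiconcavity of $\det^{1/(n-1)}$ on $\Sym^+(n)$ (the $n/(n-1)$-growth being handled by truncation $\det^{1/(n-1)}\wedge M$, still $\dv$-quasiconcave as an infimum of such, and monotone convergence as $M\to\infty$), we obtain, for a.e. $x\in\T^n$,
\begin{equation*}
\int_{\Sym^+(n)}\det^{1/(n-1)}(M)\,d\nu_x(M)\;\le\;\det^{1/(n-1)}\!\Bigl(\int M\,d\nu_x(M)\Bigr)=\det^{1/(n-1)}(A(x)).
\end{equation*}

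\emph{Identifying $h$ with the Young-measure average.} Chacon's biting lemma applied to the $L^1$-bounded sequence $\{\det^{1/(n-1)}(A_k)\}$ yields, up to a further subsequence, sets $B_j\subset\T^n$ with $|B_j|\to 0$ such that $\{\det^{1/(n-1)}(A_{k_j})\,\chi_{\T^n\setminus B_j}\}_j$ is equi-integrable. Since excising the vanishing sets $B_j$ leaves the Young measure unchanged a.e., this equi-integrable piece converges weakly in $L^1$ to $x\mapsto\int\det^{1/(n-1)}(M)\,d\nu_x(M)$, while its complement, supported on $B_j$, weak-$*$ converges as a measure to something concentrated on the $\mathcal L^n$-null set $\bigcap_N\bigcup_{j\ge N}B_j$. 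Uniqueness of the weak-$*$ limit in $\mathcal{M}(\T^n)$ then yields $h(x)=\int\det^{1/(n-1)}(M)\,d\nu_x(M)$ a.e., and combined with the previous display this proves~\eqref{hest}. The ``in particular'' conclusion is immediate: if $\mu\ll \mathcal L^n$ then $\mu^s=0$, whence $\limsup_k\D(A_k)=\mu(\T^n)=\int_{\T^n} h\,dx\le \D(A)$.

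\textbf{Main obstacle.} The crux is the passage from mere boundedness of $|\dv A_k|$ to the $\dv$-free character of $\{\nu_x\}$, which rests on the compact embedding of measures into $W^{-1,p}$ for $p<n/(n-1)$ and on the Fonseca--M\"uller characterisation of $\mathcal{A}$-free Young measures. This is precisely what allows Serre's \emph{global} inequality~\eqref{impineq} to be localised into a \emph{pointwise} Jensen inequality against $\nu_x$; absent such a localisation (e.g.\ if one only had divergences bounded in a weaker norm preventing compactness into $W^{-1,p}$), the generated Young measure need not be $\dv$-free and Jensen could fail, as one sees by considering sequences of the form $M_0\chi_{E_k}$ with $E_k$ oscillating wildly.
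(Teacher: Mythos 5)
The central gap is in the identification $h(x)=\int\det^{1/(n-1)}(M)\,d\nu_x(M)$ via the biting lemma. You claim that the excised part $\det^{1/(n-1)}(A_{k_j})\chi_{B_j}\,dx$, being supported on sets of vanishing Lebesgue measure, weak-$*$ converges to a measure concentrated on the $\mathcal L^n$-null set $\bigcap_N\bigcup_{j\ge N}B_j$. This is false: a nonnegative $L^1$-bounded sequence supported on sets $B_j$ with $|B_j|\to 0$ can weak-$*$ converge to an absolutely continuous measure. For example, with $g_j \doteq 2^j\sum_{i=1}^{2^j}\chi_{[i2^{-j},\,i2^{-j}+4^{-j}]}$ on $\T^1$, one has $|\spt g_j|=2^{-j}\to 0$ yet $g_j\,dx\overset{*}{\rightharpoonup}\mathcal L^1$. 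What the biting decomposition actually yields, splitting $\mu$ as $\bar f\,dx+\sigma$ with $\bar f(x)=\int\det^{1/(n-1)}d\nu_x$ and $\sigma\ge 0$ the weak-$*$ limit of the excised part, is only $h(x)\ge \bar f(x)$ for a.e.\ $x$ (since $\sigma$ may carry an absolutely continuous part). This is the wrong direction for \eqref{hest}: the excess $h-\bar f$ is exactly the ``diffuse concentration'' that the Young measure $\nu_x$ and its Jensen inequality cannot see, and controlling it is the whole content of the theorem. The paper's proof addresses this by blowing up directly the measure limit $\mu$ at Lebesgue points $a$ of $h$, keeping track of the singular parts of $\mu$ and of the divergence measures, and invoking Serre's inequality on the localized competitors $B_{k,R}=\varphi A_k(a+R\cdot)+(1-\varphi)A(a)$ after solving a Monge--Amp\`ere problem; that is what produces a bound on $h(a)$ itself, not merely on the biting limit.

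A secondary, but also real, issue is the Jensen step. Serre's estimate~\eqref{impineq} gives $\dv$-quasiconcavity of $\det^{1/(n-1)}$ only on the cone $\Sym^+(n)$, while the Fonseca--M\"uller characterisation requires test functions that are $\mathcal A$-quasiconvex (here, quasiconcave) on the entire state space with matching growth; one cannot test with a function defined and quasiconcave only on a cone, because quasiconcavity at a point $B\in\Sym^+(n)$ involves oscillations $B+z(x)$ that need not stay in $\Sym^+(n)$. Extending $\min(\det^{1/(n-1)},M)$ off $\Sym^+(n)$ while preserving $\dv$-quasiconcavity is not automatic and needs an argument. This is precisely why the paper's construction is engineered to keep $B_{k,R}$ inside $\Sym^+(n)$ using convexity of the cone; your Young-measure route would need a substitute for that device. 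The first gap, however, is the fatal one.
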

This theorem in particular shows that Corollary \ref{tbc} holds true since equi-integrable sequences weakly converge in $L^1$ and hence do not display singular parts in the limit. Its proof is based on an adaptation of the proof of \cite[Thm. 2]{USC}.  Section \ref{uscleb} will be entirely devoted to explain the mistake in the proof of \cite[Cor. 7]{USC} and the proof of this theorem.
\\
\\
We will now focus on the main/new results of the current paper.  In what follows we will denote by $|\mu|$ the  variation measure of a vector-valued measure $\mu\in \mathcal{M}(\T^n;\R^n)$. From the theory of divergence-measure fields, see \cite{DMCT} and references therein for an overview, and in particular from \cite[Thm. 3.2]{Sil05} it is known that if $A\in X_p$, $\frac{n}{n-1}\leq p \le \infty$,  then $|\dv A|$ is absolutely continuous with respect to the Hausdorff measure $\mathcal{H}^{n-p'}$, $p'$ being the H\"older conjugate of $p$, i.e. $\frac{1}{p}+\frac{1}{p'}=1$. Note that if $p>\frac{n}{n-1}$ then $p'<n$, and hence $|\dv A|$ is diffuse. On the other hand, the counterexample to the upper semicontinuity for $p = \frac{n}{n-1}$ we proposed in \cite[Prop. 8]{USC} displays a Dirac mass in the limit of $| \dv A_k|$.   This lets us raise the following:

\begin{QUE}\label{QUES} Can better properties of $\{|\dv A_k|\}_{k}$ compensate for the lack of integrability of $\{A_k\}_k$? More precisely, denote by $\nu= w^*\text{-}\lim_{k\rightarrow \infty}|\dv A_k|$. Can the upper semicontinuity of $\D$ persist if one replaces the hypothesis $p>\frac{n}{n-1}$ in Theorem \ref{t:usc_supercritical} by a qualitative assumption $\nu \ll \mathcal{H}^\delta$, or, if necessary, by a more quantitative one $\nu \leq C \mathcal{H}^\delta $, for some positive numbers $C,\delta>0$ ?
\end{QUE}
In Proposition \ref{counter}  we show that the answer to the above question is negative in the case $p<\frac{n}{n-1}$. In particular we construct a sequence $\{A_k\}_k$ such that $\dv A_k=0$ for all $k\geq 1$, but $\limsup_k \D(A_k)>\D(A)$. The situation is more interesting for $p = \frac{n}{n-1}$, which is the natural scaling exponent of $\D(A)$. In Section \ref{charac}, we will show the following result, which gives a positive answer to Question \ref{QUES} in the critical case.
\begin{Teo}[Characterization of the singular part]\label{intro:charact_singular_part}
Let $A_k \rightharpoonup A$ in $X_{1}$. First, let $u$ be the weak $L^1(\T^n)$ limit of $\{|A_k|\}_{k \in \N}$, up to a non relabelled subsequence. We assume 
\begin{equation}\label{morereglim}
u \in L^\frac{n}{n-1}(\T^n).
\end{equation}
Let $\mu$ and $\nu$ be, respectively, the weak-star limits of the measures $\mu_k = \det^\frac{1}{n-1}(A_k)dx$ and $\nu_k = |\dv A_k|$. Denote by $\mu^s$ and $\nu^s$ the singular parts of these measures with respect to the Lebesgue measure. There exists a dimensional constant $C= C(n) > 0$ such that the following holds. If $\{x_i\}_{i\in \N}$ is the countable set  of points in $\T^n$ such that $\nu^s(\{x_i\}) > 0$, then $\mu^s$ is a discrete measure concentrated on $\{x_i\}_{i\in \N}$ and moreover it holds
\begin{equation}\label{bound_singular_concentration}
\mu^s\leq C(n) \sum_{i=1}^\infty \nu^s(\{x_i\})^\frac{n}{n-1}\delta_{x_i}\quad \text{as measures}.
\end{equation}
\end{Teo}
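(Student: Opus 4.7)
The plan is to analyze the singular part of $\mu$ through a localized blow-up, reducing the statement to two pointwise facts at every $x_0 \in \T^n$: that $\mu^s(\{x_0\}) \le C(n)\, \nu^s(\{x_0\})^{n/(n-1)}$, and that $\mu^s$ carries no diffuse singular mass on the complement of the countable atom set $\{x_i\}$ of $\nu^s$. Together these directly give the asserted domination of $\mu^s$ by a countable sum of Diracs concentrated on $\{x_i\}$.

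The key new analytic ingredient is a local, divergence-corrected counterpart of Serre's inequality \eqref{impineq}. Concretely, I would aim to prove that for every ball $B=B_r(x_0)\subset\T^n$ and every $A\in L^1(B;\Sym^+(n))$ with $\dv A \in \mathcal{M}(B;\R^n)$,
\begin{equation*}
\int_B \det^{1/(n-1)}(A)\,dx \le C(n)\left[|B|\left(\fint_B |A|\,dx\right)^{n/(n-1)}+|\dv A|(B)^{n/(n-1)}\right].
\end{equation*}
The construction would proceed by extending $A|_B$ to a $\Sym^+(n)$-valued, divergence-free field $\tilde A$ on $\T^n$: on $\T^n\setminus B$ solve the divergence equation with Neumann data given by the Silhavy normal trace of $A$ on $\partial B$ (for instance via a Bogovskii-type operator), then add a multiple of the identity of size of order $r^{1-n}|\dv A|(B)$ to restore positivity, and finally apply the global quasiconcavity \eqref{impineq} to $\tilde A$. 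Both sides of the proposed local inequality scale as $r^n$ under rescaling of $B_r$, so scale-invariance provides a consistency check for this ansatz.

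With the local inequality in hand, the rest is a standard limiting procedure. Applying it to $A=A_k$ on balls $B_r(x_0)$ with $\mu(\partial B_r)=\nu(\partial B_r)=0$, letting $k\to\infty$ and using the weak-$L^1$ convergence $|A_k|\rightharpoonup u$ together with Jensen's inequality applied to the convex function $t\mapsto t^{n/(n-1)}$ yields
\begin{equation*}
\mu(B_r(x_0)) \le C(n)\left[\int_{B_r(x_0)} u^{n/(n-1)}\,dx + \nu(B_r(x_0))^{n/(n-1)}\right].
\end{equation*}
Since $u\in L^{n/(n-1)}(\T^n)$, the first term vanishes as $r\to 0$ by absolute continuity of the Lebesgue integral, while the second converges to $\nu^s(\{x_0\})^{n/(n-1)}$; this is exactly the required pointwise bound. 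The absence of a diffuse singular part of $\mu^s$ on the complement of $\{x_i\}$ is then obtained by applying the same inequality to any Vitali cover of such a set by small balls avoiding the atoms of $\nu^s$, using the estimate $\sum_i \nu(B_{r_i})^{n/(n-1)} \le (\max_i \nu(B_{r_i}))^{1/(n-1)}\cdot\nu(\bigcup_i B_{r_i})$, which tends to zero because balls avoiding atoms carry arbitrarily small $\nu$-mass as $r_i \to 0$.

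The main obstacle is the construction of the extension $\tilde A$ in the local Serre-type inequality. The non-linear positivity constraint $\tilde A \in \Sym^+(n)$ couples poorly with the linear divergence correction: the Bogovskii output generically fails to be positive semidefinite, and restoring positivity by adding a multiple of the identity can inflate the effective mass of $\tilde A$. Keeping this inflation controlled by $|\dv A|(B)$ rather than by the much larger quantity $\int_B|A|$ requires a careful interplay between the normal trace of $A$ on $\partial B$ (available through Silhavy's theory of divergence-measure fields) and the positivity correction, and this is where I expect the bulk of the technical effort to lie.
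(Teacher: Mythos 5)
Your overall strategy --- a local, divergence-corrected Serre inequality, then weak-$L^1$ convergence, Jensen, and a measure-theoretic blowup --- is the right shape, and the key local inequality you posit is exactly what the paper proves and uses (inequality \eqref{useful}). But your route to it is the gap, and you acknowledge this yourself: the extension of $A|_B$ to a divergence-free $\Sym^+(n)$-valued field on $\T^n$ via a Bogovskii-type corrector does not work as stated. The corrector is a matrix field with no sign or symmetry structure, and the size of the identity multiple needed to restore positivity is governed by the \emph{$L^\infty$} norm of the corrector, which is not controlled by $|\dv A|(B)$. This is not a technical nuisance to be pushed through later: it \emph{is} the hard part, and proving the local inequality via extension is most likely harder than the theorem itself.

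The paper sidesteps the extension entirely. After mollification, it applies Serre's local estimate \cite[Thm. 2.3]{SER} on the ball $B_r(x)$, which produces a boundary trace term $\int_{\partial B_r}|A_\eps\, n_{\partial B_r}|\,d\sigma$ rather than an interior average. The decisive move is then to integrate the resulting inequality in $r$ over $(0,2R)$: by Fubini this converts the boundary term into the interior integral $\int_{B_{2R}}|A_\eps|\,dy$, giving precisely your local inequality, and letting $\eps\to 0$ is handled by the weak-$*$ convergence of $|\dv(A\star\rho_\eps)|$ to $|\dv A|$. You should adopt this route in place of the extension. One smaller remark: your Vitali-cover argument for ruling out a diffuse singular part of $\mu^s$ needs a bounded-overlap cover (Besicovitch) for the inequality $\sum_i\nu(B_{r_i})\lesssim\nu(\bigcup_i B_{r_i})$ to be legitimate; the paper instead performs a Radon--Nikodym decomposition of $\mu^s$ against the auxiliary measure $v\,dy+\nu^s$ combined with an asymptotic doubling lemma. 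Either route is viable once the local inequality is in hand.
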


Note that since the measure $\nu^s$ is a finite measure, then the set of atoms $\{x_i\}_{i\in\N}$ can be at most countable.  Let us comment on our assumption \eqref{morereglim}. First, this is always satisfied if we assume $\{A_k\}_k$ is equibounded in $L^\frac{n}{n - 1}(\T^n)$. However, we preferred to give this more general statement since one can easily find examples of sequences of $\{A_k\}_k$ for which \eqref{morereglim} holds. For instance, if $\{A_k\}_k$ is an equibounded sequence in $X_1$ and weakly converges in $L^1$ to $0 \in \Sym^+(n)$, then \eqref{morereglim} holds. Indeed, since $A_k \ge 0$ for all $k$, weak convergence to $0$ implies strong convergence to $0$ in $L^1$, and hence $\lim_k|A_k| = u = 0$, which of course fulfills \eqref{morereglim}. 
\\
\\
In Section \ref{addit}, and in particular in Corollaries \ref{cor1}-\ref{cor2},  we will give a better bound than  \eqref{bound_singular_concentration}, but since it requires additional technical details we prefer not to state it here. However, an immediate by-product of Corollary \ref{cor2} is the following characterisation of compactness of the Sobolev embedding  $W^{1,p}(\T^n)\subset L^{p^*}(\T^n)$, which in turn generalizes a celebrated result of P.-L. Lions \cite{PLL3,PLL4} by relaxing the assumption on the full gradient to any directional derivative. 
\begin{Cor}\label{cor3}
Let $p \in [1,n)$ and $p^*=\frac{np}{n-p}$ be the corresponding Sobolev exponent. Consider a sequence $\{u_k\}_{k \in \N}$ which is equibounded in $W^{1,p}(\T^n)$. Assume $|u_k|^{p^*}dx \weak g\,dx + \mu^s$, with $\mu^s \perp \mathcal{L}^n$. Fix any direction $v \in \mathbb{S}^{n-1}$ and set $\gamma_k \doteq |(Du_{k},v)|^pdx$. Suppose that for some subsequence
\begin{equation}\label{dirder}
\gamma_{k_j} \weak \gamma,
\end{equation}
for some diffuse measure $\gamma$, i.e. $\gamma(\{x\}) = 0$ for all $x \in \T^n$. Then, $\mu^s \equiv 0$ and in particular, if $u_k \rightharpoonup u$ in $L^{p^*}(\T^n)$ and strongly in $L^1(\T^n)$, then the convergence is strong in $L^{p^*}(\T^n)$.
\end{Cor}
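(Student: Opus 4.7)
The strategy is to reduce the statement to an application of the refined bound of Corollary \ref{cor2} to a suitably chosen matrix field $\{A_k\}_k$ associated to $\{u_k\}_k$. After rotating coordinates we may assume $v = e_n$, and the natural candidate is the isotropic field
\[
A_k \doteq |u_k|^{\beta}\,\Id, \qquad \beta \doteq \tfrac{p(n-1)}{n-p} = \tfrac{p^*(n-1)}{n}.
\]
First I would verify that $\{A_k\}_k$ satisfies the hypotheses of Theorem \ref{intro:charact_singular_part}. Since $\{u_k\}_k$ is uniformly bounded in $W^{1,p}(\T^n)$, Sobolev embedding gives a uniform bound in $L^{p^*}$; a direct computation then shows $|A_k|^{n/(n-1)} = c|u_k|^{p^*}$, so $\{A_k\}_k$ is uniformly bounded in $L^{n/(n-1)}(\T^n;\Sym^+(n))$ and in particular the weak $L^1$ limit $u$ of $|A_k|$ belongs to $L^{n/(n-1)}$. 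A further direct computation gives $\det^{1/(n-1)}(A_k) = c_n|u_k|^{p^*}$ pointwise, whence the singular part of the weak-$*$ limit of $\det^{1/(n-1)}(A_k)\,dx$ coincides (up to a constant) with the singular part $\mu^s$ of the limit of $|u_k|^{p^*}dx$.

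Next I would control the atomic part of $\nu \doteq \lim|\dv A_k|$ in terms of $\gamma$. The chain rule gives $\dv A_k = \beta|u_k|^{\beta-1}D|u_k|$, hence the pointwise estimate $|\dv A_k| \le \beta|u_k|^{\beta-1}|Du_k|$. A localized Hölder inequality on $B_r(x_0)$ with the sharp pair of conjugate exponents (the algebra $(\beta-1)p/(p-1) = p^*$ being exactly the point of this choice of $\beta$) gives
\[
\int_{B_r(x_0)}|u_k|^{\beta-1}|(Du_k,v)|\,dx \le \Big(\int_{B_r(x_0)}|u_k|^{p^*}dx\Big)^{(p-1)/p}\Big(\int_{B_r(x_0)}|(Du_k,v)|^p\,dx\Big)^{1/p}.
\]
The refinement contained in Corollary \ref{cor2} exploits the tensorial structure of $\det^{1/(n-1)}$ to detect only the component of $\dv A_k$ along the direction $v$, so that after passing $k\to\infty$ and then $r\to 0^+$ one obtains the sharp Sobolev–concentration inequality
\[
\mu^s(\{x_0\})^{1/p^*} \le C\,\gamma(\{x_0\})^{1/p} \quad \text{for every } x_0\in\T^n.
\]
Combined with the assumption $\gamma(\{x\})=0$ for all $x\in\T^n$, this immediately forces $\mu^s(\{x_0\})=0$ for every $x_0$, and hence $\mu^s\equiv 0$, which is the first conclusion.

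For the second part: the hypotheses $u_k\rightharpoonup u$ in $L^{p^*}$ and $u_k\to u$ in $L^1$ together imply, along a subsequence, a.e.\ convergence $u_k\to u$ on $\T^n$. Combined with the uniform $L^{p^*}$ bound and the just-established $\mu^s\equiv 0$, the Brezis--Lieb lemma yields $\|u_k\|_{L^{p^*}}^{p^*}\to\|u\|_{L^{p^*}}^{p^*}$, and uniform convexity of $L^{p^*}$ promotes the weak convergence to strong convergence in $L^{p^*}(\T^n)$. The main obstacle in the whole argument is the passage from the naive isotropic estimate, which a priori controls the atoms of $\nu$ only by the full gradient $|Du_k|$, to the directional refinement involving only $|(Du_k,v)|$; this is precisely where the sharper statement of Corollary \ref{cor2} (as opposed to the unrefined Theorem \ref{intro:charact_singular_part}) is indispensable.
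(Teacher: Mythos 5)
Your construction $A_k=|u_k|^\beta\Id$, the pointwise identity $\det^{1/(n-1)}(A_k)=|u_k|^{p^*}$, and the H\"older estimate
\[
\int_{B_r}|(\dv A_k,v)|\,dy\le C\Big(\int_{B_r}|u_k|^{p^*}dy\Big)^{1/p'}\Big(\int_{B_r}|(Du_k,v)|^p\,dy\Big)^{1/p}
\]
leading to $\alpha(B_r)\le C\gamma^{1/p}(B_r)$ (so that $\gamma$ diffuse forces $\alpha$ diffuse, and Corollary~\ref{cor2} gives $\mu^s\equiv0$) coincide with the paper's proof of the first assertion. The intermediate ``sharp Sobolev--concentration inequality'' $\mu^s(\{x_0\})^{1/p^*}\le C\gamma(\{x_0\})^{1/p}$ is not literally what Corollary~\ref{cor2} provides (it is a qualitative diffuseness implication, not a quantitative atom estimate), but the logic you are using is sound.

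The gap is in the second assertion. You write that $\mu^s\equiv 0$ together with the Brezis--Lieb lemma yields $\|u_k\|_{L^{p^*}}^{p^*}\to\|u\|_{L^{p^*}}^{p^*}$. It does not. Brezis--Lieb (with the a.e.\ convergence you correctly extract from the $L^1$ and weak $L^{p^*}$ hypotheses) gives
\[
\lim_k\big(\|u_k\|_{L^{p^*}}^{p^*}-\|u_k-u\|_{L^{p^*}}^{p^*}\big)=\|u\|_{L^{p^*}}^{p^*},
\]
so concluding $\|u_k\|_{L^{p^*}}^{p^*}\to\|u\|_{L^{p^*}}^{p^*}$ is \emph{equivalent} to $\|u_k-u\|_{L^{p^*}}\to 0$, i.e.\ to what you want to prove: the argument is circular. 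What $\mu^s\equiv 0$ actually gives is $|u_k|^{p^*}dx\weak g\,dx$ with no atomic part, but this alone does not force $g=|u|^{p^*}$: the absence of a singular part does \emph{not} imply equi-integrability of $\{|u_k|^{p^*}\}_k$, so there could in principle be a diffuse defect $g-|u|^{p^*}\ge 0$. The paper closes exactly this gap by applying its own upper semicontinuity result (Theorem~\ref{introleb}) to the matrix fields $A_k\rightharpoonup A=|u|^\beta\Id$ in $X_1$, obtaining $g\le\det^{1/(n-1)}(A)=|u|^{p^*}$; combined with Fatou's $|u|^{p^*}\le g$, this pins down $g=|u|^{p^*}$, and only then does Brezis--Lieb give $\|u_k-u\|_{L^{p^*}}\to0$. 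Your proof must either invoke Theorem~\ref{introleb} at this point, or substitute an equivalent argument identifying the Lebesgue part $g$ with $|u|^{p^*}$.
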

Going back to our functional $\D=\D(\cdot)$, by combining Theorems \ref{introleb}-\ref{intro:charact_singular_part}, we obtain the following complete description of possible failure of upper-semicontinuity of $\D$ in $X_{\frac{n}{n-1}}$.
 \begin{Cor}\label{intro:full_USC}
Let $\{A_k\}_{k\in \N}\subset X_1$ be such that $A_k\rightharpoonup A$ in $X_1$. Assume further that $\{|A_k|\}_k$ converges weakly to $u \in L^\frac{n}{n-1}(\T^n)$ as in \eqref{morereglim}.  Let $\nu=\text{w*-}\lim_{k\rightarrow \infty} |\dv A_k |$ and denote by $\nu^s$ its singular part with respect to the Lebesgue measure. There exists a dimensional constant $C= C(n)> 0$ such that the following statement holds true. If $\{x_i\}_{i\in \N}\subset \T^n$ is the countable set in which $\nu^s(\{x_i\})>0$, letting $\text{w\\
*-}\lim_{k\rightarrow \infty}\det^\frac{1}{n-1} A_k=\mu =h\,dx+\mu^s$ with $\mu^s\perp \mathcal{L}^n$, we have
$$
h(x)\leq \left( \det A(x)\right)^\frac{1}{n-1},\quad \text{for a.e.  } x\in \T^n
$$
and $\mu^s$ is a discrete measure concentrated on $\{x_i\}_{i\in \N}$ with the inequality
\begin{equation}\label{mu_sing_bound_nu_sing}
\mu^s\leq C(n) \sum_{i=1}^\infty \nu^s(\{x_i\})^\frac{n}{n-1}\delta_{x_i}\quad \text{as measures}.
\end{equation}
In particular, the functional $\D=\D(\cdot)$ is weakly upper semicontinuous in a point $A\in X_\frac{n}{n-1}$ along the sequences $\{A_k\}_{k\in \N}\subset X_\frac{n}{n-1}$ such that $\nu^s$ is diffuse.
\end{Cor}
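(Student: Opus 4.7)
The plan is to deduce Corollary \ref{intro:full_USC} almost immediately from Theorems \ref{introleb} and \ref{intro:charact_singular_part}, plus a short subsequence argument for the closing upper-semicontinuity statement. First I would observe that the sequences of nonnegative Radon measures $\det^\frac{1}{n-1}(A_k)\,dx$ and $|\dv A_k|$ have uniformly bounded total variation on $\T^n$: the latter by the very definition of $X_1$-convergence, and the former because the quasiconcavity inequality \eqref{impineq} applied to each $A_k$ itself gives $\D(A_k)\le \det^\frac{1}{n-1}(\int_{\T^n}A_k\,dx)$, a quantity uniformly bounded because $\{A_k\}_k$ is equibounded in $L^1$ (since $|A_k|\rightharpoonup u$ weakly in $L^1$). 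By weak-$*$ compactness in $\mathcal{M}(\T^n)$, up to a non-relabelled subsequence I may then assume both weak-$*$ limits $\mu$ and $\nu$ exist, putting us in the exact setting of both main theorems.

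The two main inequalities in the statement are now essentially free of charge. The bound $h(x)\le(\det A(x))^\frac{1}{n-1}$ a.e.\ is precisely the conclusion of Theorem \ref{introleb}, whose only requirement is $X_1$-convergence. The structure of $\mu^s$ as a discrete measure supported on the atoms of $\nu^s$, together with the inequality \eqref{mu_sing_bound_nu_sing}, is Theorem \ref{intro:charact_singular_part}, and its extra hypothesis \eqref{morereglim} is exactly what we have postulated.

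For the final ``in particular'' part, suppose $\nu^s$ is diffuse, so that $\nu^s(\{x\})=0$ for every $x\in\T^n$ and the indexing set $\{x_i\}_i$ of atoms is empty. Then \eqref{mu_sing_bound_nu_sing} forces $\mu^s\equiv 0$, and the displayed bound on $h$ gives $\mu\le (\det A)^\frac{1}{n-1}\,dx$. To upgrade this into a genuine $\limsup$ inequality (rather than a statement only for the chosen subsequence), I would pick a subsequence $A_{k_j}$ realizing $\limsup_k\D(A_k)$, extract a further sub-subsequence so that $\det^\frac{1}{n-1}(A_{k_j})\,dx\weak \tilde\mu=\tilde h\,dx+\tilde\mu^s$; the weak-$*$ limit of $|\dv A_{k_j}|$ is necessarily $\nu$ again, hence its singular part is still diffuse, and Theorems \ref{introleb}--\ref{intro:charact_singular_part} applied to this sub-subsequence yield $\tilde\mu^s\equiv 0$ and $\tilde h\le(\det A)^\frac{1}{n-1}$ a.e. Integrating,
\[
\limsup_{k\to\infty}\D(A_k)=\lim_{j\to\infty}\D(A_{k_j})=\tilde\mu(\T^n)=\int_{\T^n}\tilde h\,dx\le\int_{\T^n}(\det A)^\frac{1}{n-1}\,dx=\D(A).
\]
Since all the analytic content has been placed in the two main theorems, there is no substantial obstacle here: the only point worth being careful about is the order of subsequence extractions in this last step, so that all relevant weak-$*$ limits exist simultaneously and the diffuseness of $\nu^s$ is inherited by the limit measure along the sub-subsequence.
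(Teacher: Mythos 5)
Your proposal is correct and follows exactly the route the paper intends (the paper simply says ``Combining this with Theorem~\ref{introleb}, we readily obtain Corollary~\ref{intro:full_USC}''); the subsequence bookkeeping you add for the final $\limsup$ claim is sound. One small inaccuracy worth flagging: you invoke the quasiconcavity inequality~\eqref{impineq} to bound $\D(A_k)$ uniformly, but that inequality is stated in the paper only for divergence-free matrix fields, whereas your $A_k$ need not be divergence-free. The uniform bound you need is still true, but should instead be read off from Serre's more general estimate \cite[Thm.~2.3]{SER}, i.e.\ from its consequence~\eqref{useful} (which carries an explicit $|\dv A_k|$ contribution on the right), or alternatively is simply not needed at all under the statement's implicit hypothesis that the whole sequence $\det^{\frac{1}{n-1}}(A_k)\,dx$ already converges weak-$*$ to $\mu$, in which case $\D(A_k)\to\mu(\T^n)$ directly and no extraction of sub-subsequences is required.
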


This result states that the failure of upper semicontinuity of $\D(A)$ along a sequence $\{A_k\}_k \subset X_{\frac{n}{n-1}}$ is completely controlled by $\nu^s$, if $\nu$ is the weak-star limit of $\{|\dv A_k|\}_k$, and it should be compared with the aforementioned classical works on concentration-compactness of Lions \cite{PLL1,PLL2,PLL3,PLL4}. As stated,  the previous corollary implies weak upper semicontinuity of $\D$ along sequences equibounded in $L^\frac{n}{n-1}$ for which \{$| \dv A_k|_{k \in \N}$\} does not generate atoms. Moreover,  as already discussed above, we complement this result with Proposition \ref{counter}  which shows that, in the subcritical case $p<\frac{n}{n-1}$,  such a quantitative characterization fails. In particular the divergence-free sequence $\{A_k\}_k$ constructed in Proposition \ref{counter} displays an atom in $\mu^s$, i.e. the singular part of the measure generated by  $\{\det^\frac{1}{n-1} A_k\}_k$,  disproving the validity of \eqref{mu_sing_bound_nu_sing} in the case $p<\frac{n}{n-1}$ and consequently indicating the sharpness of assumption \eqref{morereglim} used in Theorem \ref{intro:charact_singular_part} and in Corollary \ref{intro:full_USC}. A similar example has been considered earlier in \cite{Serre21}*{Sec. 4}. 
\\
\\
Our final main result again concerns the interplay between $|\dv A|$ and ${\det}^\frac{1}{n-1}(A)$. Indeed, in Section \ref{hardy}, we will show by adapting a procedure devised by S. M\"uller in \cite{MULDET,MULDETPOS} the following
\begin{Teo}\label{intro:det_hardy}
Let $A \in X_{\frac{n}{n-1}}$ and suppose that
\begin{equation}\label{gencond}
\tilde M \left(|\dv A|\right) (x)\doteq\sup_{0 <R <1}\frac{1}{R^{n-1}}|\dv A|(B_R(x)\cap \T^n) \in L^\frac{n}{n-1}(\T^n).
\end{equation}
Then,
\begin{equation}\label{llogl}
\int_{\T^n}{\det}^{\frac{1}{n -1}}(A(x))\ln\left(1 + {\det}^{\frac{1}{n-1}}(A)(x)\right) \,dx \leq c\left(\|A\|_{L^{\frac{n}{n-1}}(\T^n)}, \left\|\tilde M \left(|\dv A|\right)\right\|_{L^{\frac{n}{n-1}}(\T^n)}\right).
\end{equation}
\end{Teo}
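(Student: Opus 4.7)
The strategy follows M\"uller's approach from \cite{MULDET, MULDETPOS}: I would reduce \eqref{llogl} to a pointwise maximal-function bound and then invoke the classical characterization of $L\log L$ as the set of nonnegative functions whose Hardy-Littlewood maximal function lies in $L^1$. Writing $M$ for the uncentered Hardy-Littlewood maximal operator on $\T^n$, the heart of the proof is the pointwise estimate
\[
M\bigl(\det^{1/(n-1)}(A)\bigr)(x) \;\leq\; C\,\bigl(M|A|(x)\bigr)^{n/(n-1)} + C\,\bigl(\tilde M(|\dv A|)(x)\bigr)^{n/(n-1)}, \quad \text{for a.e. } x\in\T^n,
\]
for a dimensional constant $C$. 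Granting this, the proof concludes as follows. Since $A \in L^{n/(n-1)}(\T^n;\Sym^+(n))$ and $|A| \leq \tr A$ on $\Sym^+(n)$, the boundedness of $M$ on $L^{n/(n-1)}$ places the first summand in $L^1(\T^n)$; the second summand is in $L^1(\T^n)$ by hypothesis \eqref{gencond}. Stein's theorem then yields \eqref{llogl} with a constant depending quantitatively on $\|A\|_{L^{n/(n-1)}}$ and $\|\tilde M(|\dv A|)\|_{L^{n/(n-1)}}$.

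The pointwise maximal estimate would in turn follow from the following localized, error-corrected form of Serre's inequality \eqref{impineq}: for every ball $B_R(x_0) \subset \T^n$ with $R\in(0,1)$,
\[
\frac{1}{|B_R(x_0)|}\int_{B_R(x_0)} \det^{1/(n-1)}(A(y))\,dy \;\leq\; C\left(\frac{1}{|B_R(x_0)|}\int_{B_R(x_0)} |A(y)|\,dy\right)^{n/(n-1)} + C\left(\frac{|\dv A|(B_R(x_0))}{R^{n-1}}\right)^{n/(n-1)}.
\]
Taking the supremum over $R\in(0,1)$ on both sides yields the pointwise maximal bound above. To prove this local form, a natural approach is to decompose $A$ on $B_R(x_0)$, after rescaling to unit size and extending periodically by a suitable cutoff, as a positive semi-definite divergence-free matrix field $A_0$, to which Serre's global inequality \eqref{impineq} applies, plus a correction $A_1$ whose relevant norms are controlled by $|\dv A|(B_R(x_0))$. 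The contribution of $A_1$ is then absorbed using the pointwise bound $\det^{1/(n-1)}(A)\leq C_n|A|^{n/(n-1)}$ valid on $\Sym^+(n)$ (a consequence of AM-GM applied to the eigenvalues) together with the triangle-type concavity of $\det^{1/(n-1)}$ on $\Sym^+(n)$.

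The main obstacle is precisely the construction of this decomposition: one must solve $\dv X = \dv A$ on $B_R(x_0)$ with a symmetric matrix field $X$ while keeping $A - X \geq 0$, a constrained divergence problem on $\Sym^+(n)$ that does not directly reduce to standard Bogovskii-type arguments. A plausible way around it is to adapt Serre's original proof of \eqref{impineq} directly to the local setting, carefully tracking the boundary and divergence contributions that appear in the integrations by parts when one drops the hypotheses of periodicity and vanishing divergence. Once the local Serre inequality is in place with the correct scaling in $R$, the passage to the pointwise maximal estimate is immediate and the $L\log L$ conclusion \eqref{llogl} follows by a routine combination of the $L^{n/(n-1)}$ maximal inequality with the classical maximal characterization of $L\log L$.
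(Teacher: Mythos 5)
Your proposal follows the same skeleton as the paper's proof: localize to a ball, use M\"uller's/Stein's equivalence between $h\in L\log L$ and $Mh\in L^1$, and reduce everything to a pointwise maximal estimate of the form
\[
M\bigl({\det}^{1/(n-1)}(A)\bigr)(x)\;\lesssim\;\bigl(M|A|(x)\bigr)^{n/(n-1)} + \bigl(\tilde M(|\dv A|)(x)\bigr)^{n/(n-1)},
\]
which then gives \eqref{llogl} via the $L^{n/(n-1)}\to L^{n/(n-1)}$ boundedness of $M$ and hypothesis \eqref{gencond}. That much is exactly right and is exactly what the paper does.

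Where you diverge is in how to prove the localized quasiconcavity inequality that underlies the pointwise estimate. You propose to decompose $A$ on $B_R(x_0)$ as $A_0 + A_1$ with $A_0\in\Sym^+(n)$ divergence-free and $A_1$ controlled by $|\dv A|(B_R(x_0))$, so that \eqref{impineq} applies to $A_0$; you then (correctly) flag that solving $\dv X = \dv A$ subject to the one-sided constraint $A - X\ge 0$ is not a standard Bogovskii problem and is the ``main obstacle.'' This obstacle is genuine, and the paper does not face it because it does not decompose at all: it invokes Serre's \emph{local} quasiconcavity estimate \cite[Thm.~2.3]{SER}, which already reads
\[
\int_{\Omega}{\det}^{1/(n-1)}(A)\,dy \;\le\; C\left(\int_{\partial\Omega}|A\,n_{\partial\Omega}|\,d\sigma + \int_\Omega |\dv A|\,dy\right)^{\frac{n}{n-1}}
\]
for smooth $A$ on an open $\Omega$, with no periodicity or divergence-free hypothesis. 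One applies this on balls $B_r(x)$ for a mollified $A_\eps$ (needed because $\dv A$ is only a measure), integrates in $r\in(0,2R)$ to absorb the boundary term into a bulk term, and passes $\eps\to 0$ along radii not charged by $|\dv A|$; dividing the resulting \eqref{useful} by $R^n$ gives precisely your pointwise maximal bound. In other words, your fallback suggestion --- ``adapt Serre's original proof directly to the local setting, tracking boundary and divergence contributions'' --- is not a speculative workaround but an existing theorem of Serre, and once it is in hand the rest of your argument goes through verbatim. The only remaining technicalities you omit (mollification, choosing good radii, the $r$-integration) are routine.
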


Let us put this result into context. For a convex function $\varphi\in W^{2,n}(B_1)$, where $B_1 \subset \R^n$ is the unit ball centered at $0$, we have by \cite{MULDET,MULDETPOS} the remarkable property that\footnote{It is worth noticing that the results of \cite{MULDET,MULDETPOS} imply the same inequality for general maps $u: B_1 \to \R^n$, even if they are not the gradient of a convex function. For even more general results in this direction, see \cite{DET}.}
\begin{equation}\label{lloglfi}
\int_{B_\frac{1}{2}}\det(D^2\varphi)\ln(1 + \det(D^2\varphi))dx < + \infty.
\end{equation}
In other words, $\det(D^2\varphi) \in \mathcal{H}^1_{\loc}$, the local Hardy space. Every Hessian of a $W^{2,n}$ convex function gives rise to a divergence free matrix field $A \in X_{\frac{n}{n - 1}}$ simply by setting
\[
A(x) \doteq \cof D^2\varphi(x),
\]
where $\cof$ is the cofactor matrix. Thus our Theorem \ref{intro:det_hardy} extends property \eqref{lloglfi} to a larger class of matrix-fields, which include for instance all divergence-free matrix fields in $X_{\frac{n}{n-1}}$. Furthermore, Theorem \ref{intro:det_hardy} is optimal in the following sense. One cannot hope to upgrade \eqref{llogl} to a $L^{1+\eps}$ estimate for ${\det}^\frac{1}{n-1}(A)$. Indeed, in \cite{DRT}, the authors showed that, for $p \le \frac{n}{n-1}$, there exists $A \in X_p$ such that
\[
{\det}^\frac{1}{n - 1}(A) \notin \bigcup_{\eps > 0}L^{1 + \eps}_{\loc}(\T^n),
\]
hence one cannot expect a better gain of integrability than \eqref{llogl}. 
Moreover, both the assumptions $p=\frac{n}{n-1}$ and \eqref{gencond} are essentially sharp: one cannot take $A \in X_p$ for $p < \frac{n}{n-1}$ and still hope for \eqref{llogl}, even with additional requirements on $\dv A$, and  for $p=\frac{n}{n-1}$ assumption \eqref{gencond} cannot be avoided. We give more precise details in Remarks \ref{exp} and \ref{rem_hardy_crit} at the end of the paper.
\\
\\
Note that  applying Theorem \ref{intro:det_hardy} to a sequence $\{A_k\}_k\subset X_\frac{n}{n-1}$ yields in particular the upper semicontinuity of $\D$ if  
\begin{equation}\label{tilde_M_div_bounded}
\sup_k \left\|\tilde M (|\dv A_k|)\right\|_{L^\frac{n}{n - 1}(\T^n)} \le C.
\end{equation}
However, with respect to Corollary \ref{intro:full_USC} which guarantees upper semicontinuity by the only requirement that, in the limit, the sequence of measures $\{| \dv A_k|\}_k$ does not generate atoms, Theorem \ref{intro:det_hardy} yields, assuming \eqref{tilde_M_div_bounded}, the stronger conclusion that $\{ \det^\frac{1}{n-1} A_k\}_k$ is bounded in $\mathcal{H}^1(\T^n)$. In Appendix \ref{AppB} we will show some well-known conditions which imply \eqref{gencond}. See Remark \ref{vs} for a more detailed discussion.
\\
\\
The last result we will prove in Section \ref{hardy} will be the following corollary of Theorem \ref{intro:det_hardy}.
\begin{Cor}\label{intro:hardystrong}
Let $\lambda>0$. Define the cone 
$$
C_\lambda\doteq \left\{ A\in \Sym^+(n)\, : \, \det A \geq \lambda |A|^n \right\}.
$$
Let $\{A_k\}_{k\in \N}\subset X_\frac{n}{n-1}\cap C_\lambda$ be such that $A_k \rightharpoonup A$ in $X_\frac{n}{n-1}$, $A_k\rightarrow A$ almost everywhere on $\T^n$ and the variation $|\dv A_k|$ is such that
\[
\sup_k\|\tilde M \left(|\dv A_k|\right) \|_{L^\frac{n}{n - 1}(\T^n)} \le C.
\]
Then $A_k\rightarrow A$ in $ L^\frac{n}{n-1}(\T^n)$.
\end{Cor}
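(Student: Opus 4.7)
The plan is to reduce the statement to Vitali's convergence theorem by using the cone condition to transfer equi-integrability from $\{\det^{1/(n-1)}(A_k)\}_k$ (which will come from Theorem \ref{intro:det_hardy}) to $\{|A_k|^{n/(n-1)}\}_k$.

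First I would record the pointwise consequence of $A_k(x) \in C_\lambda$, namely
\[
|A_k(x)|^\frac{n}{n-1} \leq \lambda^{-\frac{1}{n-1}}\, {\det}^\frac{1}{n-1}(A_k(x)) \qquad \text{for a.e. } x \in \T^n.
\]
This is just the cone inequality raised to the power $1/(n-1)$, and it turns any integrability or equi-integrability property of $\det^{1/(n-1)}(A_k)$ into the corresponding property for $|A_k|^{n/(n-1)}$.

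Next I would invoke Theorem \ref{intro:det_hardy}. The hypotheses $A_k \in X_\frac{n}{n-1}$ with $\sup_k \|A_k\|_{L^{n/(n-1)}}$ bounded (which follows from the weak convergence in $X_\frac{n}{n-1}$) and $\sup_k \|\tilde M(|\dv A_k|)\|_{L^{n/(n-1)}} \leq C$ are exactly what Theorem \ref{intro:det_hardy} requires, so the $L\log L$ estimate \eqref{llogl} applies uniformly in $k$:
\[
\sup_k \int_{\T^n} {\det}^\frac{1}{n-1}(A_k)\, \ln\bigl(1 + {\det}^\frac{1}{n-1}(A_k)\bigr)\, dx < +\infty.
\]
By the de la Vall\'ee Poussin criterion, this bound is (more than) enough to conclude that the sequence $\{\det^{1/(n-1)}(A_k)\}_k$ is equi-integrable on $\T^n$. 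Combining this with the pointwise cone bound above, also $\{|A_k|^{n/(n-1)}\}_k$ is equi-integrable.

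Finally, since by assumption $A_k \to A$ almost everywhere in $\T^n$ and $\{|A_k|^{n/(n-1)}\}_k$ is equi-integrable, Vitali's convergence theorem yields $A_k \to A$ strongly in $L^\frac{n}{n-1}(\T^n)$, as claimed. I do not foresee a genuine obstacle: the cone condition is tailored precisely so that the Hardy-type improvement of Theorem \ref{intro:det_hardy} upgrades weak $L^{n/(n-1)}$ convergence to strong $L^{n/(n-1)}$ convergence, and the only care needed is to verify that the uniform $L\log L$ bound suffices for equi-integrability (which is standard).
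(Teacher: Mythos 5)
Your proof is correct and follows essentially the same route as the paper: apply Theorem \ref{intro:det_hardy} uniformly in $k$, transfer the resulting integrability gain to $|A_k|^{n/(n-1)}$ via the cone condition, and conclude with Vitali's theorem. The only cosmetic difference is that you spell out the de la Vall\'ee Poussin step, whereas the paper phrases it as $\{\det^{1/(n-1)}(A_k)\}_k$ being bounded in $\mathcal{H}^1(\T^n)$ and hence equi-integrable; these are the same fact.
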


Finally, in Appendix \ref{AppA}, we will recall some useful facts about Radon Measures and their Lebesgue decomposition.
\\
\\
\textbf{ Acknowledgements}. 
The second author is indebted to Jonas Hirsch for useful discussions concerning the proof of Theorem \ref{intro:charact_singular_part} and for pointing out reference \cite{DLDPHM}. We thank Denis Serre for proposing Question \ref{QUES} to us and for suggesting Corollary \ref{cor2}.  Furthermore, we thank Andr\'e Guerra and Bogdan Rai\c{t}{\u{a}} for fruitful conversations which led to Remark \ref{rem_hardy_crit} and improvements in the introduction.

\section{Notation and technical preliminaries}\label{Sec:notations}

We will denote with $\T^n$ the $n$-dimensional torus of $\R^n$, that is defined as $\R^n/\Z^n$. We identify $\T^n$ with $[0,1]^n$, so that $|\T^n| = 1$, where $|E|$ denotes the $n$-dimensional Lebesgue measure of the Borel set $E\subset \R^n$. Moreover, we see every function $f: \T^n \to \R^m$ as a $\Z^n$-periodic function defined on $\R^n$, i.e. $f(x+ z) = f(x),\forall x \in \R^n, z \in \Z^n$.
\\
\\
For a set $E \subset \R^n,\T^n$, we denote its boundary as $\partial E$ and its closure as $\overline{E}$. The standard scalar product in $\R^n$ and the Frobenius scalar product in $\R^{n\times n}$ are both denoted by $(\cdot,\cdot)$.

\subsection{Radon measures}

We denote by $\mathcal{M}(\T^n;\R^m)$ the space of bounded Radon measures with values in $\R^m$. When $m =1$, we denote this space by $\mathcal{M}(\T^n)$, and the space of positive Radon measures by $\mathcal{M}_+(\T^n)$. We recall that this is a normed space, where the norm is given by
 $$\|\mu\|_{\mathcal{M}(\T^n;\R^m)} \doteq \sup_{\Phi \in C^0(\T^n;\R^m),\|\Phi\|_{\infty}\le 1}\mu(\Phi),
 $$
with $\mu(\Phi)\doteq \int_{\T^n} \Phi\cdot d\mu $. Then the weak-star convergence of $\mu_k \in \mathcal{M}(\T^n;\R^m)$ to $\mu \in \mathcal{M}(\T^n;\R^m)$ is given by
\[
\mu_k \weak \mu  \Leftrightarrow \mu_k(\Phi) \to \mu(\Phi), \, \, \forall \Phi \in C^0(\T^n;\R^m).
\]
Since $\mathcal{M}(\T^n;\R^m)$ is the dual of $C^0(\T^n;\R^m)$ that is a separable space, we have sequential weak-$*$ compactness for equibounded sequences $\mu_k \in \mathcal{M}(\T^n;\R^m)$. See for instance \cite[Sec. 1.9]{EVG}.
\\
\\
Moreover, for a vector-valued measure $\mu$ we will denote by $|\mu|\in \mathcal{M}_+(\T^n)$ its variation, namely the positive (scalar-valued) measure defined as
$$
|\mu|(\varphi)=\int_{\T^n} \varphi \,d|\mu| \doteq \sup_{g \in C^0(\T^n;\R^m),|g|\le \varphi}\mu(g),\quad \forall \varphi\in C^0(\T^n), \, \varphi \geq 0.
$$
For every $\mu \in \mathcal{M}(\T^n;\R^m)$, we consider its Lebesgue decomposition $\mu = g\,dx + \mu^s$, where $g \in L^1(\T^n;\R^m)$ and $\mu^s \in \mathcal{M}(\T^n;\R^m)$ denotes a singular measure with respect to the Lebesgue measure. Recall that, if $\alpha,\beta \in \mathcal{M}(\T^n;\R^m)$, then $\alpha$ is said to be singular with respect to $\beta$, or simply $\alpha \perp \beta$, if there exists $A \subset \T^n$ with $|\beta|(A) = 0$ and
\[
|\alpha|(E) = |\alpha|(E\cap A), \quad  \text{for every Borel set } E \subset \T^n.
\]
We will recall more precise facts about decomposition of measures in Section \ref{AppA}. A Lebesgue point for a function $g \in L^1(\T^n;\R^m)$ is a point $x$ such that
$$
\lim_{r\rightarrow 0^+}\fint_{B_r(x)}\left|g(y) - g(x)\right|\,dy=0, \quad \text{ where } \quad \fint_{E} f(y)\,dy = \frac{1}{|E|}\int_E f(y)\,dy,$$ for every $f \in L^1(\R^n)$, $ E$ Borel subset of $\R^n$  with $|E|>0$. It is well known that the set of Lebesgue points of such a function $g$ are of full measure in $\R^n$ (see for instance \cite[Theorem 1.33]{EVG}). More generally, if $\mu \in \mathcal{M}_+(\T^n)$ or $\mathcal{M}_+(\R^n)$, we call its (upper) density the function
\[
D\mu(x) \doteq \limsup_{r \to 0^+}\frac{\mu(\overline{B_r(x))}}{\omega_nr^n},
\]
where $\omega_n \doteq |B_1(0)|$ is the $n$-dimensional Lebesgue measure of the unit ball. We will use the fact that, if $\mu$ is singular with respect to the Lebesgue measure, then $D\mu(x) = 0$ for a.e. point of $\T^n$, see \cite[Thm. 1.31]{EVG}.

\subsection{Weak compactness criterion in $L^1$} We recall that a sequence $\{f_k\}_{k \in \N}$ is equi-integrable if for every $\varepsilon > 0$ there exists $\delta > 0$ such that if $|E| \le \delta$, then
\[
\sup_{k \in \N}\int_E|f_k(x)|dx \le \varepsilon.
\]
The importance of equi-integrability stems from the fact that a bounded sequence of $L^1(\T^n)$ functions $\{f_k\}_{k \in \N}$ is weakly precompact in $L^1(\T^n)$ if and only if it is equi-integrable, see \cite[Thm. 4.30]{BRE}.

\subsection{Linear algebra facts} For symmetric matrices $A,B \in \Sym^+(n)$, we use the standard notation
$
A \ge B
$
to denote the partial order relation
\[
(Av,v) \ge (Bv,v),\quad  \forall v \in \R^n.
\]
Recall the basic monotonicity property of the determinant on $\Sym^+(n)$:
\[
A \ge B \ge 0 \Rightarrow \det(A) \ge \det(B).
\]
For a matrix $A$, we denote with $P_A(\lambda)$ its characteristic polynomial, i.e. $P_A(\lambda) \doteq \det(\lambda\Id - A)$. Let us define, for a matrix $A \in \Sym^+(n)$ with eigenvalues $\lambda_1,\dots,\lambda_n$, the sum of the $i\times i$ minors
\begin{equation}\label{M}
M_i(A) \doteq \sum_{1\le j_1\le\dots\le j_i \le n}\lambda_{j_1}\dots\lambda_{j_i}, \quad \forall i \in \{1,\dots, n\},\; M_0(A) \doteq 1.
\end{equation}
It is a basic Linear Algebra fact that, for  $0 \le i \le n$ the $i$-th coefficient of $P_A(\lambda)$ is given by $(-1)^{i + n}M_{n - i}(A)$. Notice in particular that $M_n(A) = \det(A)$.

\section{Upper semicontinuity of the Lebesgue part}\label{uscleb}

The main part of this section will be the proof of Theorem \ref{introleb}, but we start by explaining our mistake in the proof of Corollary \ref{tbc}.

\subsection{Mistake in the original proof}\label{error} The main mistake in the original proof of \cite[Cor. 7]{USC} stands in the claim that there exists a constant $C=C(n,\varepsilon)>0$ such that 
\begin{equation}\label{wrong_inequality}
|B|^n\leq C \det B,
\end{equation} 
for every $B\in \Sym^+(n)$ with $B\geq \varepsilon \Id_n$.  Indeed by taking in the $2$-dimensional case $n=2$ the sequence 
$$
B_N = 
\begin{pmatrix}
N & 0 \\
0 & 1
\end{pmatrix}
$$
one gets $\|B_N\|^2=N^2+1$ while $\det B_N=N$. By letting $N\rightarrow \infty$, this shows that \eqref{wrong_inequality} cannot hold with a constant $C>0$ which does not depend on the matrix $B$.

\subsection{A technical preliminary to Theorem \ref{introleb}}\label{pos}

 We remark that it is sufficient to prove the theorem in the case in which $A_k, A\geq \varepsilon \Id_n$ for some $\varepsilon>0$. Assume indeed that the statement holds true for any sequence $\{B_k\}_{k \in \N}$ with $B_k \ge \eps \Id_n$ for some $\eps> 0$. Given a sequence $\{A_k\}_{k \in \N}$ with $A_k \rightharpoonup A$ in $X_1$, we can set $A_{k}^\eps \doteq A_k + \eps\Id_n$ for $\eps > 0$ and all $k \in \N$, for which one proved the validity of 
 \begin{equation}\label{claim_holds_for positive}
h^\eps (x)\leq \left(\det A^\eps(x)\right)^\frac{1}{n-1},
  \end{equation}
 $h^\eps$ being the absolutely continuous part of the measure $\mu^\eps=w{^*}\text{-}\lim_{k\rightarrow \infty}\text{det}^{\frac{1}{n-1}} (A^\eps_k)$.
By monotonicity of the determinant on the cone of positive definite matrices, we have
$$
\left(\det A_k^\eps\right)^\frac{1}{n-1}\ge\left(\det A_k\right)^\frac{1}{n-1}, \quad \forall k\geq 1,
$$
which, in the limit $k\rightarrow \infty$, implies that $\mu^\eps \geq \mu$ as measures. This implies $h^\eps(x) \geq h(x)$ for almost every $x\in \T^n$, and consequently by \eqref{claim_holds_for positive}
$$
h(x)\leq h^\eps (x) \leq  \left(\det A^\eps(x)\right)^\frac{1}{n-1}.
$$
Thus the corollary in the general case follows by letting $\varepsilon\rightarrow 0$.

\subsection{Proof of Theorem \ref{introleb}}
The proof we propose is an adaptation of the main theorem of \cite{USC}.
\\
\\
By Subsection \ref{pos} we can suppose that $A_k, A\geq \varepsilon \Id_n$ for some $\varepsilon>0$ and for all $k \in \N$.  Moreover, denote by $h:\T^n\rightarrow \R$ the density of the measure $\mu$ with respect to $\mathcal{L}^n$ from the statement of the corollary, that is 
\begin{equation}\label{h_density}
\mu =h\,dx+\mu^s,
\end{equation}
for some $\mu^s\perp \mathcal{L}^n$.
\\
\\
\indent\fbox{\textbf{Step 1:} definition of the main objects}
\\
\\
Let $\nu_{k} \in \mathcal{M}_+(\T^n)$ be the finite Radon measures defined by $\nu_k (E)= |\dv(A_k)|(E)$ and call $\nu$ its weak-$*$ limit, that we can always suppose to exist up to further subsequences. Recall the definition of $M_{n-i}$ given in $\eqref{M}$ and note that:
\begin{equation}\label{minorsest}
|M_{n-i}(B)|\leq C(n)|B|^{n-i}, \quad \forall B \in \Sym(n).
\end{equation}
Since $A_k \rightharpoonup A$ weakly in $L^1$, the Dunford-Pettis weak compactness criterion in $L^1$ implies that $\{A_k\}_k$ is an equi-integrable sequence. This observation in conjunction with \eqref{minorsest} shows that the same holds for 
\[
\left\{M^\frac{1}{n-1}_{n-i}(A_k(x))\right\}_{k \in \N},
\]
 as soon as $i \neq 0$. Thus, we obtain that $\forall i\neq 0$ there exists $m_{n-i}\in L^1$ such that 
\begin{equation}\label{minors_weak_compact}
M_{n-i}^{\frac{1}{n-1}}(A_k)\rightharpoonup m_{n-i}\quad \text{in } L^1.
\end{equation}
Consider $T' \subset \T^n$ to be the set of points $a \in \T^n$ such that
\begin{itemize}
\item $a$ is a Lebesgue point for $x \mapsto A(x)$ and $|A(a)|<\infty$;
\item $a$ is a Lebesgue point for $x \mapsto h(x)$ in \eqref{h_density} and $h(a)<\infty$;
\item $a$ is a Lebesgue point for every function $x \mapsto m_{n-i}(x)$, $i\neq 0$ and $m_{n-i}(a)<\infty$;
\item $a$ is a density zero point for $\mu^s$.
\end{itemize}
Since these are  $L^1(\T^n)$ functions and $\mu^s\perp \mathcal{L}^n$, we get $\mathcal{L}^n(\T^n\setminus T') = 0$.  Let $\nu = g\,dx + \nu^s$ be the Lebesgue decomposition of the weak-$*$ limit of $\nu_k$, and define $T'' \subset \T^n$ to be the set of points that are both Lebesgue points for $g$ and density $0$ points for $\nu^s$. By \cite[Thm. 1.31]{EVG}, $\mathcal{L}^n(\T^n\setminus T'') =0$. Finally, define $T \doteq T'\cap T''\cap \T^n$. We want to prove
\begin{equation}\label{FM_ineq}
h(a) \le \det(A(a))^{\frac{1}{n - 1}}, \quad \forall a \in T.
\end{equation}
Therefore, from now on we fix $a \in T$. Consider a cut-off function  $\varphi \in C^\infty_c((0,1)^n)$, $0\leq \varphi\leq 1$. For $k \in \N$ and $R>0$, we define $B_{k,R}$ over $(0,1)^n$ by
\[
B_{k,R}(x)\doteq \varphi(x) A_{k}(a+Rx) + (1 - \varphi(x))A(a).
\]
Moreover, define, for $\rho_\eta$ the standard family of nonnegative, smooth mollifiers,
\[
B_{k,R,\eta}(x) \doteq  \varphi(x) (A_{k}\star \rho_\eta)(a+Rx) + (1 - \varphi(x))A(a),
\]
for $\eta>0$ sufficiently small in terms of  $\varphi$. Remark that $B_{k,R}, B_{k,R,\eta}\equiv A(a)$ near the boundary of $[0,1]^n$, therefore they can be extended by periodicity to $\R^n$. Notice moreover that $B_{k,R}$ and $B_{k,R,\eta}$ take values in $\Sym^+(n)$.
\\
\\
\indent\fbox{\textbf{Step 2:} Monge-Amp\`ere and the main inequality}
\\
\\
First, we need to apply \cite[Thm. 2.2]{YAN}. This asserts that, for every $S \in \Sym^+(n)$, and for every smooth positive function $f:\T^n \to \R^+$, there exists a solution $\phi \in C^\infty(\T^n)$ of the Monge-Amp\`ere-type equation
\[
\det(D^2\phi(x) + S) = f(x), \forall x \in \T^n,
\]
provided $f$ satisfies
\[
\int_{\T^n}f(x)\,dx = \det(S).
\]
Furthermore, $D^2\phi(x) + S \in \Sym^+(n), \forall x \in \T^n$ and $\phi$ is uniquely determined up to constants. Therefore, we set
\[
f_{k,R}\doteq \det(B_{k,R})^{\frac{1}{n - 1}} \text{ and } f_{k,R,\delta}\doteq \det(B_{k,R})^{\frac{1}{n - 1}}\star \rho_\delta=f_{k,R}\star \rho_\delta.
\]
We let $\phi_{k,R,\delta}: \T^n \to \R$ be the solution of 
\begin{equation}\label{eq_c}
\det(D^2\phi_{k,R,\delta} + S_{k,R,\delta}) = f_{k,R,\delta},
\end{equation}
where $D^2\phi_{k,R,\delta}(x) + S_{k,R,\delta} \in \Sym^+(n), \forall x \in \T^n$. The precise form of the matrix $S_{k,R,\delta}$ will be given later, but in order to apply the previous result we need to impose the constraint
\begin{equation}\label{con}
\det(S_{k,R,\delta}) = \int_{\T^n}f_{k,R,\delta}(x)\, \,dx = \int_{\T^n}f_{k,R}(x)\, \,dx.
\end{equation}
In the last computation, we used the fact that mollification is an average-preserving operation on $\T^n$ as a simple consequence of Fubini's Theorem. Note that \eqref{eq_c} is equivalent to 
\begin{equation}\label{eq_c_2}
\det(D^2\psi_{k,R,\delta} ) =  f_{k,R,\delta},
\end{equation}
where $D^2\psi_{k,R,\delta}(x)$ is positive definite $\forall x \in \T^n$ and $\psi_{k,R,\delta}(x)\doteq \frac{1}{2}x^T S_{k,R,\delta} x +\phi_{k,R,\delta}(x)$. We will assume that
\begin{equation}\label{zer}
\phi_{k,R,\delta}(a) = 0,\quad  \forall k,R, \delta
\end{equation}
since the solution of \eqref{eq_c_2} is uniquely determined up to constants. We have, for all $k,R,\delta, \eta$,
$$
g_{k,R,\delta,\eta}\doteq \left(f_{k,R,\delta}\det(B_{k,R,\eta})\right)^\frac{1}{n}=\left( \det(D^2\psi_{k,R,\delta} B_{k,R,\eta} ) \right)^\frac{1}{n}.
$$
Since, for every $x \in \T^n$, $k \in \N$, $R,\eta,\delta > 0$, $D^2\psi_{k,R,\delta}(x)B_{k,R,\eta}(x)$ is the product of two symmetric and positive definite matrices, their product is diagonalizable with positive eigenvalues, see \cite[Prop. 6.1]{SERBOOK}. Dropping the dependence of $k,R,x,\delta,\eta$, if we call these eigenvalues $\lambda_1,\dots, \lambda_n$ we can write
\[
g_{k,R,\delta,\eta} = \left( \det(D^2\psi_{k,R,\delta} B_{k,R,\eta} ) \right)^\frac{1}{n} = (\lambda_1\dots\lambda_n)^{\frac{1}{n}} \le \frac{\sum_{i = 1}^n\lambda_i}{n},
\]
where in the last inequality we used the arithmetic-geometric mean inequality. Hence,
\[
g_{k,R,\delta,\eta} \le \frac{\tr(D^2\psi_{k,R,\delta} B_{k,R,\eta})}{n}.
\]
We rewrite for every $x \in \T^n$:
\[
\tr(D^2\phi_{k,R,\delta}B_{k,R,\eta})  = \dv(B_{k,R,\eta}D \phi_{k,R,\delta}) - (\dv(B_{k,R,\eta}),D\phi_{k,R,\delta}),
\]
from which we finally get, using the definition of $\psi_{k,R,\delta}$,
\begin{equation}\label{tbin}
g_{k,R,\delta,\eta}\le\frac{1}{n} (\tr(B_{k,R,\eta} S_{k,R,\delta} )+\dv(B_{k,R,\eta}D \phi_{k,R,\delta}) - (\dv(B_{k,R,\eta}),D\phi_{k,R,\delta}).
\end{equation}
We consider $S_{k,R,\delta}$ of the form
\begin{equation}\label{SkR}
S_{k,R,\delta} = \lambda_{k,R,\delta}\cof\left(\int_{\T^n}B_{k,R}(x)\,dx\right),
\end{equation}
for some real number $\lambda_{k,R,\delta}$ to be determined in order to fulfill \eqref{con}. Actually \eqref{con}-\eqref{SkR} imply that $\lambda_{k,R,\delta}$ and hence $S_{k,R,\delta}$, do not depend on $\delta >0$, hence we will simply write $\lambda_{k,R}$ and $S_{k,R}$ from now on. In other words, we must have
\[
\int_{\T^n}\det(B_{k,R})^{\frac{1}{n - 1}}(x)\,dx \overset{\eqref{con}}{=} \det(S_{k,R}) \overset{\eqref{SkR}}{=}\det\left(\lambda_{k,R}\cof\left(\int_{\T^n}B_{k,R}(x)\,dx\right)\right)
\]
Using the fact that $\det(\cof(X)) = \det(X)^{n - 1}$ for every $X \in \R^{n\times n}$, we solve the previous equation for $\lambda_{k,R}$ and obtain
\begin{equation}\label{lam}
\displaystyle\lambda_{k,R}=\frac{\left(\int_{\T^n}\det(B_{k,R})^{\frac{1}{ n - 1}}(x)\,dx\right)^{\frac{1}{n}}}{\left(\det\left(\int_{\T^n}B_{k,R}(x)\,dx\right)\right)^{\frac{n - 1}{n}}}.
\end{equation}
Notice that we could divide by the term 
\[
\left(\det\left(\int_{\T^n}B_{k,R}(x)\,dx\right)\right)^{\frac{n - 1}{n}}
\]
since $B_{k,R} \ge \eps \Id_n$ a.e. in $\T^n$. Observing that $\int_{\T^n}\dv(B_{k,R,\eta}D \phi_{k,R,\delta})\,dx = 0$, we integrate \eqref{tbin} to get
\begin{equation}\label{equaz1}
\int_{\T^n}g_{k,R,\delta,\eta}(x)\,dx \le \frac{1}{n}\int_{\T^n}\tr(B_{k,R,\eta}S_{k,R})\,dx- \frac{1}{n}\int_{\T^n}(\dv(B_{k,R,\eta}),D\phi_{k,R,\delta})\,dx.
\end{equation}
We will still need to manipulate this equation by bounding terms on the right hand side, and we will then let $\delta \to 0^+$ and $\eta \to 0^+$. To this aim, we start by noticing that, since $B_{k,R,\eta}$ converges strongly in $L^1$ to $B_{k,R}$ for every fixed $k$ and $R$, we see that
\begin{equation}\label{limiteta}
\lim_{\eta \to 0^+}\frac{1}{n}\int_{\T^n}\tr(B_{k,R,\eta}S_{k,R})\,dx = \left(\det\left(\int_{\T^n}B_{k,R}(x)\,dx\right)\right)^{\frac{1}{n}}\left(\int_{\T^n}\det(B_{k,R})^{\frac{1}{ n - 1}}(x)\,dx\right)^{\frac{1}{n}}.
\end{equation}
Moreover by Fatou's Lemma
\begin{equation}\label{Fat1}
\int_{\T^n}f_{k,R}(x) \,dx \le \liminf_{\eta \to 0}\liminf_{\delta \to 0}\int_{\T^n}g_{k,R,\delta,\eta}(x)\,dx.
\end{equation}
Estimating the right hand side of \eqref{equaz1} is more complicated. We start by observing as in \cite[Sec. 5.2]{SER} that $\psi_{k,R,\delta}$ is convex  for every $k,R,\delta$, and moreover the estimate
\begin{equation}\label{comp}
\|D \phi_{k,R,\delta}\|_{L^\infty(\T^n)} \le \gamma|S_{k,R,\delta}| = \gamma |S_{k,R} |
\end{equation}
holds for every $k \in \N$, $R >0$ and $\delta > 0$ for some $\gamma = \gamma(n)$.  The constraint \eqref{zer} combined with \eqref{comp} actually shows
\begin{equation}\label{comp1}
\|\phi_{k,R,\delta}\|_{L^\infty(\T^n)} +\|D \phi_{k,R,\delta}\|_{L^\infty(\T^n)} \le \gamma |S_{k,R,\delta} | = \gamma |S_{k,R}|.
\end{equation}
for a possibly larger constant $\gamma = \gamma(n)$. We claim the following fact, which we will show at the end of the proof:
\begin{equation}\label{imp}
L(a) \doteq \limsup_{R\to 0^+}\limsup_{k \to +\infty} |S_{k,R} | < + \infty.
\end{equation}
Now we rewrite the second term in the right hand side of \eqref{equaz1} as follows. First note that everywhere on $\T^n$
\[
\dv(B_{k,R,\eta}) = \varphi(x)R\dv(A_k\star \rho_\eta)(a + Rx) + [(A_k\star \rho_\eta)(a + Rx) -A(a)]D\varphi(x).
\]
Therefore
\begin{align*}
\int_{\T^n}(\dv(B_{k,R,\eta}),D\phi_{k,R,\delta})\,dx &= R\int_{\T^n}\varphi(x)(\dv(A_k \star \rho_\eta)(a + Rx),D\phi_{k,R,\delta})\,dx \\
&+ \int_{\T^n}(((A_k\star \rho_\eta)(a + Rx) -A(a))D\varphi,D\phi_{k,R,\delta})\,dx.
\end{align*}
Using the divergence theorem, we can write the last term as:
\begin{align*}
\int_{\T^n}(((A_k\star \rho_\eta)(a + &Rx)  -A(a))D\varphi,D\phi_{k,R,\delta})\,dx \\
&=-R\int_{\T^n}(\dv (A_k\star \rho_\eta)(a + Rx), D\varphi)\phi_{k,R,\delta}\,dx \\
&\quad-\int_{\T^n}((A_k\star \rho_\eta)(a + Rx) -A(a),D^2\varphi) \phi_{k,R,\delta}\,dx.
\end{align*}
Summarizing, we have
\begin{align*}
\int_{\T^n}(\dv(B_{k,R,\eta}),D\phi_{k,R,\delta})\,dx &= R\int_{\T^n}\varphi(x)(\dv(A_k\star \rho_\eta)(a + Rx),D\phi_{k,R,\delta})\,dx   \\
&\quad-R\int_{\T^n}(\dv (A_k\star \rho_\eta)(a + Rx), D\varphi)\phi_{k,R,\delta}\,dx\\
&\quad-\int_{\T^n}((A_k\star \rho_\eta)(a + Rx)-A(a),D^2\varphi) \phi_{k,R,\delta}\,dx.
\end{align*}
Using \eqref{comp1}-\eqref{imp}, we find that, for some $C > 0$ depending on $L(a)$:
\begin{equation}\label{BkR3}
\begin{split}
\left|\int_{\T^n}(\dv(B_{k,R,\eta}),D\phi_{k,R,\delta})\,dx\right| &\le CR\int_{\T^n}\Big(|\varphi(x)| + |D\varphi(x)|\Big)|\dv(A_k\star \rho_\eta)|(a + Rx)\,dx   \\
&\quad+\left|\int_{\T^n}\Big((A_k\star \rho_\eta)(a + Rx) -A(a),D^2\varphi\Big) \phi_{k,R,\delta}\,dx\right|.
\end{split}
\end{equation}
In our computations, we will always denote constants by $C$, which may vary line by line, and which may depend on every fixed quantity in this proof, for instance $\varepsilon$ from Subsection \ref{pos} and $a \in T$, but never on $\eta,\delta,k,R$. Now \eqref{comp1} in conjunction with Ascoli-Arzel\`a compactness criterion allows us, for every fixed $k,R$, to pick a sequence $\{\delta_j\}_j$ with $\delta_j \to 0$ such that
\begin{equation}\label{C0conv}
\phi_{k,R,\delta_j} \to \phi_{k,R} \text{ in }C^0(\T^n).
\end{equation}
Notice that in the limit we still have
\begin{equation}\label{comp2}
\|\phi_{k,R}\|_{L^\infty(\T^n)} +\|D \phi_{k,R}\|_{L^\infty(\T^n)} \le \gamma|S_{k,R}|.
\end{equation}
Now, denoting $Q_R(a)=a+ [0,R]^n$:
\begin{align*}
\int_{\T^n}\Big(|\varphi(x)| &+ |D\varphi(x)|\Big)|\dv(A_k\star \rho_\eta)|(a + Rx)\,dx \\
&= \frac{1}{R^n}\int_{Q_R(a)}\left(\left|\varphi\left(\frac{x-a}{R}\right)\right| + \left|D\varphi\left(\frac{x-a}{R}\right)\right|\right)|\dv((A_k\star \rho_\eta))|(x)\,dx\\
& \le \frac{C|\dv A_k \star \rho_\eta|( Q_R(a))}{R^n}.
\end{align*}
By \cite[Thm.  4.36]{MAG}, we see that
\[
|\dv(A_k\star \rho_\eta)| = |\dv(A_k)\star \rho_\eta|\weak |\dv A_k| = \nu_k, \text{ as }\eta \to 0. 
\]
Hence, by weak-$*$ convergence of measures and since $Q_{R}(a)$ is a compact set we have, see \cite[Thm. 1.40]{EVG},
\begin{equation}\label{dveta}
\limsup_{\eta \to 0^+}\int_{\T^n}\Big(|\varphi(x)| + |D\varphi(x)|\Big)|\dv (A_k\star \rho_\eta)|(a + Rx)\,dx \le \frac{C|\dv A_k|( Q_R(a))}{R^n}.
\end{equation}
Moreover, due to the strong convergence of $A_k\star \rho_\eta$ towards $A_k$ and \eqref{C0conv}, we compute
\begin{equation}\label{easyterm}
\begin{split}
\lim_{\eta\to 0}&\lim_{j\to\infty}\left|\int_{\T^n}((A_k\star \rho_\eta)(a + Rx) -A(a),D^2\varphi) \phi_{k,R,\delta_j}\,dx\right| \\
&=\lim_{j\to \infty}\lim_{\eta\to 0} \left|\int_{\T^n}((A_k\star \rho_\eta)(a + Rx) -A(a),D^2\varphi) \phi_{k,R,\delta_j}\,dx\right| \\
&=\left|\int_{\T^n}(A_k(a + Rx) -A(a),D^2\varphi) \phi_{k,R}\,dx\right|
\end{split}
\end{equation}
Now combining \eqref{limiteta}-\eqref{Fat1}-\eqref{BkR3}-\eqref{dveta}-\eqref{easyterm}, \eqref{equaz1} yields for all $k \in \N$ and $R > 0$:
\begin{equation}\label{equaz2}
\begin{split}
\int_{\T^n}{\det}^\frac{1}{n-1}(B_{k,R})\,dx &\le \left(\det\left(\int_{\T^n}B_{k,R}\,dx\right)\right)^{\frac{1}{n}}\left(\int_{\T^n}\det(B_{k,R})^{\frac{1}{ n - 1}}\,dx\right)^{\frac{1}{n}}+\frac{C|\dv A_k|( Q_R(a))}{R^{n-1}}\\
& \quad +C\left|\int_{\T^n}(A_k(a + Rx) -A(a),D^2\varphi) \phi_{k,R}\,dx\right|.
\end{split}
\end{equation}
Define
\[
\gamma_{k,R} \doteq \left(\int_{\T^n}\det(B_{k,R})^{\frac{1}{ n - 1}}(x)\,dx\right)^{\frac{1}{n}}.
\]
By the monotonicity of the determinant and the fact that $A_k(x) \ge \varepsilon \Id_n,\forall x \in \T^n, \forall k \in \N$, and $A(a) \ge \varepsilon\Id_n$, we have $B_{k,R} \ge \varepsilon\Id_n, \forall k,R$, that implies
\begin{equation}\label{bound}
\gamma_{k,R} \ge  \varepsilon^{\frac{1}{n - 1}},\quad  \forall k,R.
\end{equation}
Dividing \eqref{equaz2} by $\gamma_{k,R}$ and using \eqref{bound}, we can estimate for a constant $C >0$:
\begin{equation*}
\begin{split}
\left(\int_{\T^n}{\det}^\frac{1}{n-1}(B_{k,R})\,dx\right)^\frac{n - 1}{n} &\le \left(\det\left(\int_{\T^n}B_{k,R}(x)\,dx\right)\right)^{\frac{1}{n}}+\frac{C|\dv A_k|( Q_R(a))}{R^{n-1}}\\
& \quad +C\left|\int_{\T^n}(A_k(a + Rx) -A(a),D^2\varphi) \phi_{k,R}\,dx\right|.
\end{split}
\end{equation*}
By monotonicity of the determinant, we can further bound from below:
\begin{equation}\label{Mon1}
\int_{\T^n} \varphi(x)^{\frac{n}{n-1}}\det(A_{k}(a+Rx))^\frac{1}{n - 1}\,dx \le \int_{\T^n}{\det}^\frac{1}{n-1}(B_{k,R})\,dx.
\end{equation}
Hence we can write \eqref{equaz2} in \textcolor{orange}{its} final form:
\begin{equation}\label{equazfin}
\begin{split}
\left(\int_{\T^n}\varphi(x)^{\frac{n}{n-1}}\det(A_{k}(a+Rx))^\frac{1}{n - 1}\,dx\right)^\frac{n - 1}{n} &\le \left(\det\left(\int_{\T^n}B_{k,R}(x)\,dx\right)\right)^{\frac{1}{n}} +\frac{C|\dv A_k|( Q_R(a))}{R^{n-1}}\\
& \quad +C\left|\int_{\T^n}(A_k(a + Rx) -A(a),D^2\varphi) \phi_{k,R}\,dx\right|.
\end{split}
\end{equation}
Now we can use \eqref{comp2} in conjunction with our claim \eqref{imp} to see, through a diagonal argument, that there exists a subsequence $k_j$ independent of $m$ such that  $\phi_{k_j,\frac{1}{m}}$ converges uniformly to a continuous function $\phi_{\frac{1}{m}}$ as $j \to \infty$ for every $m \in \N$ fixed. This is immediate once we observe that \eqref{comp2} and \eqref{imp} imply that, for every fixed $m\in \N$,  $\{\phi_{k,\frac{1}{m}}\}_{k \in \N}$ is a family of equi-Lipschitz functions on $\T^n$. Moreover we find a constant $\lambda > 0$ such that
\begin{equation}\label{unif}
\|\phi_{\frac{1}{m}}\|_{C^0(\T^n)} \le \lambda, \quad \forall m \in \N,
\end{equation}
which is again immediate from \eqref{comp2}-\eqref{imp} and uniform convergence. We let
\begin{align*}
I_{j,m} &\doteq \int_{\T^n}\varphi(x)^{\frac{n}{n-1}}\det\left(A_{k_j}\left(a+\frac{x}{m}\right)\right)^\frac{1}{n - 1}\,dx,\\
II_{j,m} &\doteq \det\left(\int_{\T^n}B_{k_j,\frac{1}{m}}(x)\,dx\right),\\
III_{j,m} &\doteq  m^{n-1}|\dv A_{k_j}|(Q_\frac{1}{m}(a)),\\
IV_{j,m} &\doteq \left|\int_{\T^n}\left(A_{k_j}\left(a + \frac{x}{m}\right) -A(a),D^2\varphi\right) \phi_{k_j,\frac{1}{m}}\,dx\right|.
\end{align*}
In this notation, \eqref{equazfin} reads as
\begin{equation}\label{IIII}
I_{j,m}^\frac{n-1}{n} \le II^\frac{1}{n}_{j,m} + C(III_{j,m} + IV_{j,m}).
\end{equation}
We wish to show
\begin{align}
\liminf_m\liminf_j  I_{j,m} &\ge h(a)\int_{\T^n}\varphi^\frac{n}{n - 1}(x)\,dx \label{I}\\
\lim_m\lim_j  II_{j,m} &=\det\left(A(a)\right)\label{II}\\
\lim_m\lim_j  III_{j,m} &=0\label{III}\\
\lim_m\lim_j  IV_{j,m} &=0.\label{IV}
\end{align}
If we do so, then exploiting again \eqref{IIII} and letting $\varphi$ approximate the function $1$, we find
\[
h(a) \le {\det}^\frac{1}{n - 1}(A(a))
\]
as wanted. We are thus only left to show \eqref{I}-\eqref{II}-\eqref{III}-\eqref{IV} and finally our claim \eqref{imp}.
\\
\\
\indent\fbox{\textbf{Step 3:} proof of \eqref{I}}
\\
\\
We have
$$
I_{j,m}=\int_{Q_{1/m}(a)} \varphi^{\frac{n}{n-1}}\left(m(y-a)\right)\det(A_{k_j}(y))^\frac{1}{n - 1}m^n\,dy.
$$
Since  $\det^{\frac{1}{n-1}}( A_{k_j})\rightharpoonup^* \mu$, by letting $j\to \infty$ and recalling that $\mu^s\geq 0$ in \eqref{h_density}, we get 
$$
\liminf_{j \to \infty} I_{j,m}\geq  \int_{Q_{1/m}(a)} \varphi^{\frac{n}{n-1}}\left(m(y-a)\right)h(y)m^n\,dy= \int_{\T^n} \varphi^{\frac{n}{n-1}}\left(x\right)h\left(a+\frac{x}{m}\right)\,dx.
$$
Finally, since  $a\in (0,1)^n$ was a Lebesgue point for the function $h$, letting $m \to \infty$ we achieve 
$$
\liminf_{m\to \infty} \liminf_{j\to \infty}I_{j,m} \geq  h(a) \int_{\T^n}  \varphi^{\frac{n}{n-1}}(x) \,dx.
$$
\\
\\
\indent\fbox{\textbf{Step 4:} proof of \eqref{II}}
\\
\\
This is immediate since 
\[
\lim_{j\to \infty} B_{k_j,1/m} = (1-\varphi(x))A\left(a+\frac{x}{m}\right) + \varphi(x)A(a)
\] weakly in $L^1$ and 
\[
\lim_{m\to \infty}(1-\varphi(x))A\left(a+\frac{x}{m}\right) + \varphi(x)A(a) = A(a),
\]
strongly in $L^1$, since $a$ is a Lebesgue point of $A$.
\\
\\
\indent\fbox{\textbf{Step 5:} proof of \eqref{III}}
\\
\\
Recall our notation $\nu_k = |\dv A_k|$ and its weak-$*$ limit $\nu$. Using again \cite[Thm. 1.40]{EVG}, we estimate
\begin{align*}
\limsup_{j \to \infty}m^{n-1}\nu_{k_j}(Q_{\frac1m}(a)) &\le \frac{1}{m}\frac{\nu(Q_{\frac1m}(a))}{(\frac1m)^n} \le \frac{C'}{m}\frac{\nu(\overline{B_{\sqrt{2}/m}(a)})}{|B_{\sqrt{2}/m}(a)|} = \frac{C'}{m}\fint_{B_{\sqrt{2}/m}(a))}g\,dx + \frac{C'}{m}\frac{\nu^s(\overline{B_{\sqrt{2}/m}(a)})}{|B_{\sqrt{2}/m}(a)|},
\end{align*}
for some positive constant $C'$. Since we chose $a \in T''$, we get that the previous expression converges to 0 as $m \to \infty$.
\\
\\
\indent\fbox{\textbf{Step 6:} proof of \eqref{IV}}
\\
\\
We have
\begin{align*}
IV_{j,m} =\int_{\T^n}\left(A_{k_j}\left(a + \frac{x}{m}\right) -A(a)),D^2\varphi\right) \phi_{k_j,\frac1m}\,dx &=\int_{\T^n}\left(A_{k_j}\left(a + \frac{x}{m}\right) -A(a),D^2\varphi\right) \left(\phi_{k_j,\frac1m}- \phi_{\frac1m}\right)\,dx\\
&+ \int_{\T^n}\left(A_{k_j}\left(a + \frac{x}{m}\right) -A(a),D^2\varphi\right) \phi_{\frac1m}\,dx.
\end{align*}
The first term can be estimated as
\begin{align*}
&\left| \int_{\T^n} \left(A_{k_j}\left(a + \frac{x}{m}\right) -A(a),D^2\varphi\right) \left(\phi_{k_j,\frac1m}- \phi_{\frac1m}\right)\,dx\right| \\
&\qquad\qquad\qquad\qquad\quad \leq \left\|\phi_{k_j,\frac1m}- \phi_{\frac1m}\right\|_{C^0(\T^n)}\|D^2\varphi\|_{C^0(\T^n)}\int_{\T^n}\left|A_{k_j}\left(a + \frac{x}{m}\right) - A(a)\right|\,dx \\
&\qquad\qquad\qquad\qquad\quad=\left\|\phi_{k_j,\frac1m}- \phi_{\frac1m}\right\|_{C^0(\T^n)}\|D^2\varphi\|_{C^0(\T^n)}m^n\int_{Q_{\frac1m}(a)}\left|A_{k_j}(x) - A(a)\right|\,dx.
\end{align*}
Since $x\mapsto \|A_{k_j}(x) - A(a)\|$ is bounded in $L^1(Q_{\frac1m}(a))$ and by the uniform convergence of $\phi_{k_j,\frac1m}$ to $\phi_{\frac1m}$, we infer that the last term converges to $0$ as $j \to \infty$. On the other hand, by weak $L^1$ convergence,
\[
\int_{\T^n}\left(A_{k_j}\left(a + \frac{x}{m}\right) -A(a),D^2\varphi\right) \phi_{\frac1m}\,dx \to \int_{\T^n}\left(A\left(a + \frac{x}{m}\right) -A(a),D^2\varphi\right) \phi_{\frac1m}\,dx
\]
as $j \to \infty$. Now, since $\varphi$ is smooth and by \eqref{unif}, we can estimate for some $C > 0$:
\[
\left|\int_{\T^n}\left(A\left(a + \frac{x}{m}\right) -A(a),D^2\varphi\right) \phi_{\frac1m}\,dx\right| \le C\int_{\T^n}\left|A\left(a + \frac{x}{m}\right) - A(a)\right|\,dx.
\]
Since $a$ is a Lebesgue point for $A$, the last term converges to $0$ as $m\to \infty$.
\\
\\
\indent\fbox{\textbf{Step 7:} proof of \eqref{imp}}
\\
\\
By definition, we have 
\[
S_{k,R} = \lambda_{k,R}\cof\left(\int_{\T^n}B_{k,R}(x)\,dx\right).
\]
Therefore it suffices to prove separately that
\begin{equation}\label{imp1}
\limsup_{R\to 0}\limsup_{k\to \infty}\left|\cof\left(\int_{\T^n}B_{k,R}(x)\,dx\right)\right| < +\infty
\end{equation}
and
\begin{equation}\label{imp2}
\limsup_{R\to 0}\limsup_{k\to \infty}\lambda_{k,R} < +\infty.
\end{equation}

We start with $\eqref{imp1}$. As in Step 4, the weak convergence of $A_k$ to $A$ in $L^1$ and the fact that $a$ is a Lebesgue point for $A$ imply that
\[
\lim_{R\to 0}\lim_{k \to \infty}\int_{\T^n}B_{k,R}(x)\,dx = A(a).
\]
Hence
\[
\limsup_{R\to 0}\limsup_{k\to \infty}\left|\cof\left(\int_{\T^n}B_{k,R}(x)\,dx\right)\right| = \lim_{R\to 0}\lim_{k\to \infty}\left|\cof\left(\int_{\T^n}B_{k,R}(x)\,dx\right)\right| =  |\cof(A(a))| < + \infty,
\]
where the last inequality is again justified by $a \in T'$. Finally, we compute \eqref{imp2}. By definition
\[
\displaystyle\lambda_{k,R}=\frac{\left(\int_{\T^n}\det(B_{k,R})^{\frac{1}{ n - 1}}(x)\,dx\right)^{\frac{1}{n}}}{\left(\det(\int_{\T^n}B_{k,R}(x)\,dx)\right)^{\frac{n - 1}{n}}}.
\]
Analogously to the estimate of $\gamma_{k,R}$ of \eqref{bound}, we have
\[
\left(\det\left(\int_{\T^n}B_{k,R}(x)\,dx\right)\right)^{\frac{n - 1}{n}} \ge \varepsilon^{n - 1}.
\]
Therefore, to conclude the proof, we just need to show that
\[
\limsup_{R \to 0} \limsup_{k \to \infty} \int_{\T^n}\det(B_{k,R})^{\frac{1}{ n - 1}}(x)\,dx < +\infty.
\]
First note that $A(a) \le |A(a)|\Id_n$, and consequently estimate
\[
\det(B_{k,R}) \le \det(\varphi(x)A_k(a+Rx) + (1 -\varphi(x))|A(a)|\Id_n) = P_{-\varphi(x) A_k(a + Rx)}((1 - \varphi(x))|A(a)|),
\]
where $P_{-\varphi(x) A_k(a + Rx)}$ is the characteristic polynomial of $-\varphi(x) A_k(a + Rx)$. Recall the functions $M_{n-i}$ introduced in \eqref{M}. By the structure of the characteristic polynomial and the subadditivity of the function $t \mapsto t^{\frac{1}{n - 1}}$, we can bound
\begin{align*}
\det(B_{k,R})^\frac{1}{n - 1}(x) &\le |P_{-\varphi(x) A_k(a + Rx)}((1 - \varphi(x))|A(a)|)|^{\frac{1}{n - 1}} \\
&\le \sum_{i = 0}^n\left[(1 - \varphi(x))^i|A(a)|^iM_{n - i}(\varphi(x) A_k(a + Rx))\right]^{\frac{1}{n - 1}}.
\end{align*}
Since $M_{n - i}$ is $n - i$ homogeneous, $M_{n - i}(\varphi(x) A_k(a + Rx)) = \varphi^{n - i}(x)M_{n - i}(A_k(a + Rx))$. Hence
\[
\det(B_{k,R})^\frac{1}{n - 1}(x) \le \sum_{i = 0}^n\left[(1 - \varphi(x))^i|A(a)|^i\varphi^{n - i}(x)M_{n - i}(A_k(a + Rx))\right]^{\frac{1}{ n -1}}.
\]
Thus by \eqref{minors_weak_compact} and by recalling also that $M_n^{\frac{1}{n-1}}(A_k(x))=\det^{\frac{1}{n-1}}(A_k(x))\weak \mu(x)=h(x)\,dx+\mu^s(x)$, by letting $k\rightarrow \infty$ we find that 
\[
\int_{\T^n}\left[(1 - \varphi)^i\varphi^{n - i}M_{n - i}(A_k(a + Rx))\right]^{\frac{1}{ n -1}}\,dx \to \int_{\T^n}\left[(1 - \varphi)^i\varphi^{n - i}\right]^{\frac{1}{n - 1}}m_{n-i}(a+Rx)\,dx,\quad \forall i\neq 0,
\]
and
\begin{align*}
\int_{\T^n}\left[\varphi^{n }M_{n }(A_k(a + Rx))\right]^{\frac{1}{ n -1}}\,dx &= \frac{1}{R^n}\int_{\T^n}\left[\varphi^{n }\left(\frac{x-a}{R}\right)M_{n}(A_k(x))\right]^{\frac{1}{ n -1}}\,dx\\
& \to \int_{\T^n}\varphi^{\frac{n}{n - 1}}(x)h(a+Rx)\,dx+\frac{1}{R^n}\int_{\T^n} \varphi^\frac{n}{n-1}\left(\frac{x-a}{R}\right)\,d\mu^s(x)\\
& = \int_{\T^n}\varphi^{\frac{n}{n - 1}}(x)h(a+Rx)\,dx+\frac{1}{R^n}\int_{Q_R(a)} \varphi^\frac{n}{n-1}\left(\frac{x-a}{R}\right)\,d\mu^s(x).
\end{align*}
Recall that, since $a \in T'$, $a$ is a Lebesgue point for $x \mapsto m_{n-i}(x), \forall i,$ and for $x \mapsto h(x)$. Furthermore, again since $a \in T'$, it is also a zero-density point for $\mu^s$. Hence, letting $R \to 0^+$, we deduce
\begin{align*}
\limsup_{R\to 0^+}\limsup_{k \to \infty}\int_{\T^n}\det(B_{k,R})^{\frac{1}{n - 1}}(x)\,dx &\leq \sum_{i = 1}^n m_{n-i}(a)\int_{\T^n}\left[(1 - \varphi(x))^i|A(a)|^i\varphi^{n - i}(x)\right]^{\frac{1}{ n -1}}\,dx\\
&\quad+h(a)\int_{\T^n} \varphi^{\frac{n}{n-1}} (x)\,dx\\
& \le \sum_{i = 1}^n m_{n-i}(a)|A(a)|^{\frac{i}{n-1}} + h(a),
\end{align*}
the last inequality being true since $0 \le\varphi(x) \le 1, \forall x \in \T^n$. The last term is finite once again by our choice $a\in T'$. The proof of this claim is concluded, and hence we have shown Theorem \ref{introleb}.

\section{A complete upper semicontinuity result in the critical case}\label{charac}

Aim of this section is to show Theorem \ref{intro:charact_singular_part}. Combining this with Theorem \ref{introleb}, we readily obtain Corollary \ref{intro:full_USC}.

\subsection{Proof of Theorem \ref{intro:charact_singular_part}}
We will actually show a more general statement, in that we will not assume $\{A_k\}$ to be defined on $\T^n$, but on $T \doteq P\T^n$, for some $P \in \text{SL}(n)$, i.e. $P \in \R^{n\times n}$ and $\det(P) = 1$. In other words, if $\T^n$ is the quotient $\R^n/\mathbb{Z}^n$, then $T$ is the quotient $\R^n/(P\mathbb{Z}^n)$. In particular, we will be interested in showing that the constant appearing in the last inequality is independent of $P$. We will exploit this in Section \ref{addit} in a crucial way. The proof is divided into steps.
\\
\\
\fbox{\textbf{Step 1:} a useful inequality.}
\\
\\
Let $A' \in L^1(T)$ with $\dv(A') \in \mathcal{M}(T;\R^n)$. Fix $x \in T$ and $r \in (0,1)$. Consider $A'_\eps = A'\star \rho_\eps$, the mollification of $A'$ at scale $\eps>0$. Denote $f_\eps(x) \doteq {\det}^\frac{1}{n-1}(A'_\eps)(x).$ We can directly employ  \cite[Thm. 2.3]{SER} for $A'_\eps$ on $\Omega = B_r(x)$ to find
\[
\int_{B_r(x)}f_\eps \,dy \le C\left(\int_{\partial B_r(x)} |A'_\eps n_{\partial B_r}|\,d\sigma + \int_{B_r(x)}|\dv(A'_\eps)|\,dy\right)^\frac{n}{n -1}.
\]
Here $C> 0$ is a dimensional constant that may vary from line to line and $n_{\partial B_r}$ is the unit normal to $\partial B_r(x)$. Rewrite this as
\[
\left(\int_{B_r(x)}f_\eps(x)\,dy\right)^{\frac{n - 1}{n}} \le C\left( \int_{\partial B_r(x)} |A'_\eps|\,d\sigma + \int_{B_r(x)}|\dv(A'_\eps)|\,dy\right).
\]
Integrating between $0$ and $2R$ with $2R \le 1$, we obtain
\begin{equation}\label{still}
R\left(\int_{B_R(x)}f_\eps(x)\,dy\right)^{\frac{n - 1}{n}} \le C\left(\int_{B_{2R}(x)} |A'_\eps|\,dy + R\int_{B_{2R}(x)}|\dv(A'_\eps)|\,dy\right)
\end{equation}
Let $E$ be the set of $R \in (0,1/2)$ such that $|\dv(A')|(\partial B_{2R}(x)) = 0$. Since by assumption $|\dv(A')|$ is a finite measure, $E^c$ is at most countable, and in particular $E$ is dense in $(0,1/2)$. By \cite[Thm.  4.36]{MAG} we have $|\dv A'_{\eps}| \weak |\dv A'|$. We can now exploit \cite[Thm. 1.40]{EVG} to compute, for each $R \in E$, the limit of \eqref{still} as $\eps \to 0$:
\begin{equation}\label{useful}
\int_{B_R(x)}{\det}^\frac{1}{n-1}(A'(x))\,dy \le C\left(\frac{1}{R}\int_{B_{2R}(x)}|A'|\,dy\right)^\frac{n}{n - 1} + C|\dv(A')|^\frac{n}{n-1}(B_{2R}(x)).
\end{equation}
We can write the latter for every element of the sequence $\{A_k\}_{k \in \N}$ getting
\begin{equation}\label{usefulk}
\int_{B_R(x)}{\det}^\frac{1}{n-1}(A_k(x))\,dy \le C\left(\frac{1}{R}\int_{B_{2R}(x)}|A_k|\,dy\right)^\frac{n}{n - 1} + C|\dv(A_k)|^\frac{n}{n-1}(B_{2R}(x)),
\end{equation}
Estimate \eqref{usefulk} is valid for all $R$ such that, for all $k$,
\begin{equation}\label{Ak2R}
|\dv A_k|(\partial B_{2R}(x)) = 0.
\end{equation}
We let $I \subset (0,1/2)$ be the set of all those $R \in (0,1/2)$ such that \eqref{Ak2R} is valid for all $k \in \N$ and for which
\begin{equation}\label{R2R}
\mu(\partial{B_R(x)}) = 0,\; \mu(\partial{B_{2R}(x)}) = 0,\; \nu(\partial B_R(x)) = 0,\; \nu(\partial B_{2R}(x)) = 0.
\end{equation}
Of course, $I$ depends on $x$. Furthermore, since all the measures involved are finite, we notice that $I^c$ is at most countable, and hence $I$ is dense in $(0,1/2)$. By assumption \eqref{morereglim}, $|A_k|$ weakly converges in $L^1$ to a function $u \in L^\frac{n}{n-1}(T)$. Thus for all $R \in I$ we obtain
\begin{equation}\label{initial}
\mu(\overline{B_{R}(x)}) \le C\left(\frac{1}{R}\int_{B_{2R}(x)}u \,dy\right)^\frac{n}{n - 1} + C\nu(\overline{B_{2R}(x)})^\frac{n}{n-1}.
\end{equation}
By Jensen inequality, we estimate
\[
\left(\frac{1}{R}\int_{B_{2R}(x)}u \,dy\right)^\frac{n}{n - 1} \le C\int_{B_{2R}(x)}u^\frac{n}{n-1}\,dy.
\]
Here and in what follows, we will adopt the following convention: $C_T$ is a constant that depends on $T$, while $C$ does not. Both constants may vary from line to line. Decomposing
$\nu = g\,dy + \nu^s$, we can estimate
\begin{align*}
\nu(\overline{B_{2R}(x)})^\frac{n}{n-1} &\le C\left(\int_{B_{2R}(x)}g\,dy\right)^\frac{n}{n-1}+ C\nu^s(\overline{B_{2R}(x)})^\frac{n}{n-1}\le C_T\int_{B_{2R}(x)}g\,dy + C\nu^s(\overline{B_{2R}(x)})^\frac{n}{n-1}.
\end{align*}
Combining these inequalities, we obtain
\[
\mu^s(\overline{B_R(x)}) \le \mu(\overline{B_R(x)}) \le C_T\int_{B_{2R}(x)}\left(u^\frac{n}{n-1} +g\right)  \,dy + C\nu^s(\overline{B_{2R}(x)})^\frac{n}{n-1}.
\]
Calling $v \doteq C_T(u^\frac{n}{n-1} +g) \in L^1(T)$, we obtain for all $R \in I$ our fundamental inequality
\begin{equation}\label{MI}
\mu^s(\overline{B_R(x)}) \le  \int_{B_{2R}(x)}v(y) \,dy+ C\nu^s(\overline{B_{2R}(x)})^\frac{n}{n-1}.
\end{equation}
Observe that we also have the weaker inequality for all $R \in I$:
\begin{equation}\label{MIweak}
\mu^s(\overline{B_R(x)}) \le  \int_{B_{2R}(x)}v(y) \,dy+ C\nu^s(\overline{B_{2R}(x)})^\frac{1}{n-1}\nu^s(\overline{B_{2R}(x)} )\le \int_{B_{2R}(x)}v(y) \,dy+ C_T\nu^s(\overline{B_{2R}(x)}).
\end{equation}
Without loss of generality, we will assume $v > 0$ a.e. in $\T^n$.
\\
\\
\fbox{\textbf{Step 2:} $\mu^s$ is absolutely continuous with respect to $\nu^s$.}\\

We set $\theta$ to be the auxiliary measure defined by $\theta = v\,dy + \nu^s.$ We can employ Theorem \ref{RNdec} to write $\mu^s = f\,d\theta + \beta$. We first show that $\beta \equiv 0$. Suppose this is not the case. First of all, \eqref{MIweak} immediately implies that if $x \notin \spt(\theta)$, then $x \notin \spt(\beta)$. Thus, from Theorem \ref{RNdec} we find that $\beta$ is concentrated on the set $E$ where
\begin{equation}\label{inf1}
\liminf_{R \to 0^+}\frac{\beta(\overline{B_R(x)})}{\theta(\overline{B_R(x)})} = + \infty.
\end{equation}
By Proposition \ref{asdoub}, we may pick a point $x_0\in E \subset \spt(\beta)$ for which
\[
\limsup_{R \to 0^+}\frac{\beta(B_R(x_0))}{\beta(B_{2R}(x_0))} \ge 2^{-n}.
\]
In particular, we can find a sequence $\{R_j\}_{j \in \N}$ with $R_j,2R_j \in I$ for all $j \in \N$ such that
\begin{equation}\label{noninf1}
\liminf_j \frac{\beta(B_{R_j}(x_0))}{\beta(B_{2R_j}(x_0))} = \liminf_j \frac{\beta(\overline{B_{R_j}(x_0)})}{\beta(\overline{B_{2R_j}(x_0)})} \ge 2^{-n}.
\end{equation}
For such a point $x_0 \in E$, we may use \eqref{MIweak} to bound
\[
\frac{\beta(\overline{B_{R_j}(x_0)})}{\beta(\overline{B_{2R_j}(x_0)})}\frac{\beta(\overline{B_{2R_j}(x_0)})}{\theta(\overline{B_{2R_j}(x_0)})} \leq \frac{\mu^s(\overline{B_{R_j}(x_0)})}{\beta(\overline{B_{2R_j}(x_0)})}\frac{\beta(\overline{B_{2R_j}(x_0)})}{\theta(\overline{B_{2R_j}(x_0)})}=\frac{\mu^s(\overline{B_{R_j}(x_0)})}{\theta(\overline{B_{2R_j}(x_0)})}  \le C_T.
\]
Combining \eqref{inf1} and \eqref{noninf1} we obtain a contradiction. This shows $\beta \equiv 0$, i.e. that $\mu^s = f\,d\theta$.
Since $\theta = v\,dy + \nu^s$, we then have $\mu^s = fv dy + f\nu^s$. As $\mu^s$ and $\nu^s$ are singular with respect to $\mathcal{L}^n$, we find that $fv = 0$ for $\mathcal{L}^n$ a.e. point. It follows that
$\mu^s = f\,d\nu^s$.
\\
\\
\fbox{\textbf{Step 3: }$\mu^s$ is supported on $\{x_i\}_{i \in \N}$ where $\nu^s(\{x_i\})>0$.}\\

We know that $\mu^s = f\,d\nu^s$. Define
\[
V \doteq \big\{x \in T: \nu^s(\{x\}) = 0\big\}.
\]
Our aim is to show that $f(x) = 0$ for $\nu^s$-a.e. $x \in V$. We consider the set $A_{\nu^s}$ of Proposition \ref{asdoub}. Set $\alpha \doteq vdy$. Let $F$ be the set of all $x$ such that
\[
\limsup_{R \to 0^+}\frac{\alpha(B_R(x))}{\nu^s(\overline{B_R(x)})} = 0.
\]
Since $v>0$ a.e. in $T$, then obviously $\spt (\alpha)=T$. Then by Theorem \ref{RNdec} and since $\nu^s \perp \alpha$, we know that $\nu^s$ has to be concentrated on the set 
$$
\left\{x \in T \,:\, \liminf_{r \to 0^+}\frac{\nu^s(\overline{B_r(x)})}{\alpha(B_r(x))} = + \infty\right\},
$$
which in turn proves that the set $F$ is of full $\nu^s-$measure. Finally, let $D$ to be the set of the points $x \in T$ such that
\[
\frac{1}{\nu^s(\overline{B_R(x)})}\int_{\overline{B_R(x)}}f(y)\,d\nu^s(y) \to f(x).
\]
By \cite[Thm. 1.32]{EVG}, this is once again a set of full $\nu^s$ measure in $T$. To conclude this step, take any $x \in A_{\nu^s}\cap F\cap D \cap V$. Let $R_j \in I$ be a sequence such that
\begin{equation}\label{liminf_lowerbound}
\liminf_{j\rightarrow \infty} \frac{{\nu^s}(\overline{B_{R_j}(x)})}{{\nu^s}(\overline{B_{2R_j}(x)})} \ge 2^{-n}.
\end{equation}
 For any such $R_j$, use \eqref{MI} to find
$$
\int_{\overline{B_{R_j}(x)}} f\,d\nu^s=\mu^s(\overline{B_{R_j}(x)})\leq C_T \alpha(\overline{B_{2R_j}(x)})+C\nu^s(\overline{B_{2R_j}(x)})^\frac{n}{n-1},
$$
and consequently, dividing by $\nu^s(\overline{B_{R_j}(x)})$ and using \eqref{liminf_lowerbound}, we obtain that for all $j$ large enough
\begin{align}
\frac{1}{\nu^s(B_{R_j}(x))}\int_{\overline{B_{R_j}(x)}} f\,d\nu^s&\leq C_T\frac{\alpha(\overline{B_{2R_j}(x)})}{\nu^s(\overline{B_{2R_j}(x)})} \frac{\nu^s(\overline{B_{2R_j}(x)})}{\nu^s(\overline{B_{R_j}(x)})}+ C \frac{\nu^s(\overline{B_{2R_j}(x)})}{\nu^s(\overline{B_{R_j}(x)})}   \nu^s(\overline{B_{2R_j}(x)})^\frac{1}{n-1} \nonumber \\
&\leq 2^{n+1}  \left(C_T\frac{\alpha(\overline{B_{2R_j}(x)})}{\nu^s(\overline{B_{2R_j}(x)})} + C  \nu^s(\overline{B_{2R_j}(x)})^\frac{1}{n-1}  \right)\label{almost_done}, 
\end{align}
from which, by letting $j\rightarrow \infty$ and since $x \in  F\cap D \cap V$,  we conclude
$$
f(x)=0 \quad \text{on } A_{\nu^s}\cap F\cap D \cap V.
$$
Since the set $A_{\nu^s}\cap F\cap D$ is of $\nu^s$ full measure, denoting by $\{x_i\}_{i\in \N}$ the set of points such that $\nu^s(\{x_i\})>0$ we get that $f$, and thus $\mu^s$, is concentrated on $\{x_i\}_{i\in \N}$, or in other words
$$
\mu^s=\sum_{i=1}^\infty \mu_i \delta_{x_i},
$$
for some coefficients $\mu_i\geq 0$. By choosing $x = x_i$ in \eqref{MI} and sending $R \to 0^+$, we immediately find
$\mu_i\leq C\nu^s(\{x_i\})^\frac{n}{n-1}$, 
for some purely dimensional constant $C=C(n)>0$.

\qed

\subsection{Improved conclusion of Theorem \ref{intro:charact_singular_part}}\label{addit}
To show how to strengthen Theorem \ref{intro:charact_singular_part}, we first need a technical tool, which is essentially taken from \cite[Prop. 4.1]{AB}. Consider a sequence of vector-valued measures on $\T^n$, $V_k \doteq X_kd\nu_k$, where $X_k \in L^1(\T^n,\R^n; \nu_k)$ and $|X_k(x)| \neq 0$ for $\nu_k$-a.e. $x \in \T^n$. Assume that 
\[
\sup_k\|V_k\|_{\mathcal{M}(\T^n;\R^n)}= \sup_k\int_{\T^n}|X_k|\,d\nu_k < + \infty.
\]
For $\varphi \in C^0(\T^n\times\mathbb{S}^{n-1})$, we set
\begin{equation}\label{complete}
L_k(\varphi) = \int_{\T^n}\varphi\left(x,\frac{X_k(x)}{|X_k(x)|}\right)|X_k(x)|d\nu_k(x).
\end{equation}
As $|L_k(\varphi)| \le C\|\varphi\|_{C^0}$, for every $ k \in \N$, the Riesz Representation Theorem \cite[Thm. 1.38]{EVG}, provides us with a finite measure $\alpha_k$ on $\T^n\times \mathbb{S}^{n-1}$ with $\|\alpha_k\|_{\mathcal{M}(\T^n\times \mathbb{S}^{n-1})} \le C$ such that 
\[
L_k(\varphi) = \int_{\T^n\times \mathbb{S}^{n-1}}\varphi(x,z)d\alpha_k(x,z).
\]
By the weak-$*$ compactness of Radon Measures, see \cite[Thm. 1.41]{EVG}, we can find a subsequence $\{\alpha_{k_j}\}$ and a Radon measure $\alpha$ on $\T^n \times \mathbb{S}^{n-1}$ such that
$\alpha_{k_j}\weak \alpha$.
Finally, we consider the disintegration of $\alpha$ 
\[
\alpha(\varphi) = \int_{\T^n}\left(\int_{\mathbb{S}^{n-1}}\varphi(x,z)d\nu_x(z)\right)d\theta(x),
\]
for a family of probability measures $\nu_x$ on $\mathbb{S}^{n-1}$ and a Radon measure $\theta$ on $\mathbb T^n$. We write $\alpha = (\{\nu_x\}_{x \in \T^n},\theta)$. Observe that, if we have that $\nu_k \weak \nu$, i.e. if the variations $|V_k|$ converge to $\nu$, then $\theta = \nu$, as can be seen easily by noticing that, if $\varphi(x,z) = h(x)$,
\[
L_{k_j}(\varphi) = \int_{\T^n} h(x)|X_{k_j}(x)|d\nu_{k_j}(x) = |V_{k_j}|(h).
\]
We then give the following
\begin{Def}\label{cc}
Let $V_k = X_kd\nu_k$ be a sequence of vector-valued measures defined on $\T^n$ with values in $\R^n$ and with $|X_k| \neq 0$ for $\nu_k$-a.e. $x \in \T^n$, $\sup_k \int_{\T^n} |X_k|\,d\nu_k < + \infty$. Then, we say that $V_k$ converges completely to the couple $(\{\nu_x\}_{x \in T},\nu)$ as above if
\begin{enumerate}
\item $\nu_k \weak \nu$;
\item the functionals $L_k$ defined in \eqref{complete} are such that 
\[
L_k(\varphi) \to \int_{\T^n}\left(\int_{\mathbb{S}^{n-1}}\varphi(x,z)d\nu_x(z)\right)d\nu(x),\quad \forall \varphi \in C^0(\T^n\times \mathbb S^{n-1}).
\]
\end{enumerate}
\end{Def}
Let us now go back to our problem. Consider a sequence $\{A_k\}_k$ fulfilling the assumptions of Theorem \ref{intro:charact_singular_part}. We represent $\dv A_k(x) = X_kd\nu_k$ as above and consider $\nu$ to be the weak-$*$ limit of $\{\nu_k\}_k$. Up to a (non-relabeled) subsequence, we assume that $\{\dv A_k\}_k$ converges completely to $(\{\nu_x\}_{x \in \T^n},\nu)$. Now we consider any matrix $P \in \text{SL}(n) = \{X \in \R^{n\times n}: \det(X) = 1\}$ and set, as in \cite[Lemma 1.1]{SER},
\[
B_k(y) \doteq PA_k(P^{-1}y)P^T, \quad \forall y \in T \doteq P\T^n.
\]
In particular, $B_k \in \Sym(n)^+$ for a.e. $y \in T$ and $\dv B_k$ is a vector-valued measure represented by
\[
\dv B_k = Y_kd\beta_k = P\dv A_k(P^{-1}y) =PX_k(P^{-1}\cdot) (f_P)_{\#}(\nu_k),
\]
where $(f_P)_{\#}(\nu_k)$ is the measure defined as the pushforward through $P$ of $\nu_k$, i.e.
\[
(f_P)_{\#}(\nu_k)(h) = \int_{\T^n}h(Px)d\nu_k(x). 
\]
Using the definition of complete convergence, we can write, for any $h \in C^0(T)$
\begin{align*}
\beta_k(h) = |\dv B_k| (h) &= \int_{T}h(y)\left|PX_k(P^{-1}y)\right| \,d(f_P)_{\#}(\nu_k) \\
&= \int_Th(Px)|PX_k(x)|d\nu_k \to \int_{T}h(Px) \left(\int_{\mathbb{S}^{n-1}}|Pz|d\nu_x\right)d\nu.
\end{align*}
Hence
\begin{equation}\label{carbeta}
\beta_k \weak \beta = g(\cdot)(f_P)_{\#}(\nu), \quad \text{where }g(y) = \int_{\mathbb{S}^{n-1}}|Pz|d\nu_{P^{-1}y}, \text{ for $(f_P)_{\#}(\nu)$-a.e. } y \in T. 
\end{equation}
Furthermore, if $\mu_k = \det^{\frac{1}{n - 1}}(A_k)$ converges weakly-$*$ to $\mu$, then $\mu_k^B \doteq  \det^{\frac{1}{n - 1}}(B_k) =  (f_P)_{\#}(\mu_k)$ converges weakly-$*$ to $\mu^B = (f_P)_{\#}(\mu)$. Now we employ Theorem \ref{intro:charact_singular_part} on $\{B_k\}_k$. Notice that in our previous section we showed the result for matrix fields defined on $P\T^n$, with a constant $C$ independent of $P$. Hence we find
\[
(\mu^B)^s \le C\sum_{i}\beta(\{y_i\})^\frac{n}{n - 1}\delta_{y_i}.
\]
Moreover, we observe that $\nu^s$ contains $\delta_{x_i}$ with weight $\nu^s(\{x_i\})$ if and only if $\beta^s$ contains $\delta_{Px_i}$ with weight $\left(\int_{\mathbb{S}^{n-1}}|Pz|d\nu_{x_i}\right)\nu^s(\{x_i\})$. Combining the latter with the fact that $\mu^B = (f_P)_{\#}(\mu)$, we find the estimate
\[
\mu^s \le C\sum_{i}\left(\int_{\mathbb{S}^{n-1}}|Pz|d\nu_{x_i}\right)^\frac{n}{n - 1}\nu^s(\{x_i\})^\frac{n}{n - 1}\delta_{x_i}.
\]
If $P = \Id$, we get back to the estimate of Theorem \ref{intro:charact_singular_part}. We infer two corollaries from this proof.

\begin{Cor}\label{cor1}
Let $\{A_k\}_k$ fulfill the assumptions of Theorem \ref{intro:charact_singular_part}. Suppose in addition that $\{\dv A_k\}$ converge completely to $(\{\nu_x\}_{x \in \T^n},\nu)$ in the sense of Definition \ref{cc}. Then, there exists a dimensional constant $C= C(n) > 0$ such that the following holds. If $\{x_i\}_{i\in \N}$ is the countable set  of points in $\T^n$ such that $\nu^s(\{x_i\}) > 0$, then
\[
\mu^s\leq C(n) \inf_{P \in SL(n)}\sum_{i=1}^\infty \left(\int_{\mathbb{S}^{n-1}}|Pz|d\nu_{x_i}\right)^\frac{n}{n - 1}\nu^s(\{x_i\})^\frac{n}{n-1}\delta_{x_i}\quad \text{as measures}.
\]
\end{Cor}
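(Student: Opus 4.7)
The plan is essentially to execute the change-of-variables argument that is already sketched in Section~\ref{addit}, with the observation that the constant $C(n)$ in Theorem~\ref{intro:charact_singular_part} was deliberately shown to be independent of the lattice $P\Z^n$. So the first step will be to fix an arbitrary $P\in \mathrm{SL}(n)$ and define $B_k(y)\doteq PA_k(P^{-1}y)P^T$ on the torus $T\doteq P\T^n$. Since $\det P=1$, the sequence $\{B_k\}_k$ lies in the analogous $X_1$-space on $T$, satisfies $B_k\rightharpoonup PAP^T$ in $L^1$, and its moduli have weak $L^1$ limit bounded above in $L^{n/(n-1)}(T)$ by $|P|^2 u(P^{-1}\cdot)$, so the hypothesis \eqref{morereglim} is preserved.

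Next I would track the transformation of the divergence. By the chain rule, $\dv B_k = P(\dv A_k)(P^{-1}\cdot)$, so as vector-valued measures we have $\dv B_k = Y_k\,d\beta_k$ with $\beta_k=|\dv B_k|$. Using the complete convergence hypothesis for $\{\dv A_k\}_k$ together with the identification \eqref{complete}, the variations $\beta_k$ converge weakly-$\ast$ to a measure $\beta$ which, as computed in \eqref{carbeta}, equals $g(y)\,d(f_P)_\#\nu$ with $g(y)=\int_{\mathbb S^{n-1}}|Pz|\,d\nu_{P^{-1}y}(z)$. In particular, the singular part $\beta^s$ is a discrete measure supported at the points $\{Px_i\}_i$, with weight $\big(\int_{\mathbb S^{n-1}}|Pz|\,d\nu_{x_i}(z)\big)\nu^s(\{x_i\})$ at $Px_i$: this is because $(f_P)_\#$ preserves the Lebesgue decomposition (since $P$ sends the Lebesgue null set carrying $\nu^s$ to a Lebesgue null set, $\det P=1$), and the density $g$ is continuous at the atoms via the disintegration.

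At this point I apply Theorem~\ref{intro:charact_singular_part} to $\{B_k\}_k$ on $T=P\T^n$. Its conclusion reads
\begin{equation*}
(\mu^B)^s \leq C(n) \sum_i \beta^s(\{Px_i\})^{\frac{n}{n-1}}\delta_{Px_i} = C(n)\sum_i \bigg(\int_{\mathbb S^{n-1}}|Pz|\,d\nu_{x_i}(z)\bigg)^{\frac{n}{n-1}}\nu^s(\{x_i\})^{\frac{n}{n-1}}\delta_{Px_i},
\end{equation*}
where the dimensional constant $C(n)$ is precisely the one from Theorem~\ref{intro:charact_singular_part}, independent of $P$ by construction of the proof in Section~\ref{charac}. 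Then I push forward by $P^{-1}$: since $\mu^B=(f_P)_\#\mu$ we have $(\mu^B)^s=(f_P)_\#\mu^s$, so applying $(f_{P^{-1}})_\#$ on both sides yields
\begin{equation*}
\mu^s \leq C(n)\sum_i \bigg(\int_{\mathbb S^{n-1}}|Pz|\,d\nu_{x_i}(z)\bigg)^{\frac{n}{n-1}}\nu^s(\{x_i\})^{\frac{n}{n-1}}\delta_{x_i}.
\end{equation*}
Since $P\in \mathrm{SL}(n)$ was arbitrary, taking the infimum over such $P$ gives the claim.

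The main obstacle I expect is the bookkeeping of the Lebesgue decomposition under pushforward and, relatedly, the justification that the atomic part of $\beta$ is computed by evaluating the continuous disintegration integrand $g$ at $Px_i$ times $\nu^s(\{x_i\})$. One has to argue that $(f_P)_\#$ maps the Lebesgue-singular set of $\nu$ to a Lebesgue-singular set for $\beta$ (immediate since $|\det P|=1$), and that the density $g$ is well-defined $\beta$-almost everywhere—this is standard disintegration theory, but deserves a careful line since $\nu_x$ is only defined up to $\nu$-null sets. Once this is in place, the rest is algebra plus invocation of the quantitative Theorem~\ref{intro:charact_singular_part}.
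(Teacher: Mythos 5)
Your argument is correct and follows essentially the same route as the paper: fix $P\in\mathrm{SL}(n)$, conjugate $A_k$ to $B_k(y)=PA_k(P^{-1}y)P^T$ on $T=P\T^n$, use the complete-convergence hypothesis to identify the weak-$*$ limit $\beta$ of $|\dv B_k|$ and its atomic weights $\bigl(\int_{\mathbb S^{n-1}}|Pz|\,d\nu_{x_i}\bigr)\nu^s(\{x_i\})$, apply Theorem~\ref{intro:charact_singular_part} on $T$ (whose proof was deliberately carried out on $P\T^n$ with a $P$-independent constant), push back by $P^{-1}$, and optimize over $P$. You are slightly more explicit than the paper on two bookkeeping points — the stability of hypothesis~\eqref{morereglim} under the substitution $A_k\mapsto B_k$, and the preservation of the Lebesgue decomposition under $(f_P)_\#$ (valid since $\det P=1$) — but these are elaborations of the same proof, not a different one.
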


Notice that the additional requirement that $\{\dv A_k\}_k$ converges completely is not adding anything to the assumptions of Theorem \ref{intro:charact_singular_part}, since we can always achieve it after taking a subsequence. Let us move to the second corollary. First, for all $v \in \mathbb{S}^{n-1}$, we consider the measure $(\dv A_k,v)$ and its variation measure $|(\dv A_k,v)|$. Theorem \ref{intro:charact_singular_part} tells us that, in many cases, concentration in the limit of $\D(A_k)$ is only due to Dirac Delta's contained in the limit of $|\dv A_k|$. The reasoning above actually yields that, if a Dirac Delta is contained in $\mu^s$, then it must be contained in any weak-$*$ limit of $|(\dv A_k,v)|$ for any direction $v \in \mathbb S^{n-1}$. Let us show why.

\begin{Cor}\label{cor2}
Let $\{A_k\}_k$ fulfill the assumptions of Theorem \ref{intro:charact_singular_part}. Suppose that for some $v \in \mathbb{S}^{n-1}$ and for some subsequence
\[
|(\dv A_{k_j},v)| \weak \alpha
\]
and $\alpha$ is diffuse, i.e. $\alpha(\{x\}) = 0$ for all $x \in \T^n$. Then, $\mu^s \equiv 0$.
\end{Cor}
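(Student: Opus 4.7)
The plan is to leverage the disintegration introduced right before Corollary \ref{cor1} in order to show that, at every atom of $\nu^s$, the measure $\nu_{x_i}$ is supported on the great subsphere $v^\perp\cap \mathbb{S}^{n-1}$, and then to use the freedom in the choice of $P\in \mathrm{SL}(n)$ in Corollary \ref{cor1} to crush the right hand side to zero.

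First I would, starting from the subsequence $\{A_{k_j}\}$ given by the hypothesis, extract a further (non-relabelled) subsequence along which $\{\dv A_{k_j}\}$ converges completely to some couple $(\{\nu_x\}_{x \in \T^n},\nu)$ in the sense of Definition \ref{cc}, and along which the weak-$*$ limits $\mu$ and $\nu$ of $\det^{1/(n-1)}(A_{k_j})$ and $|\dv A_{k_j}|$ exist (this is WLOG since $\mu^s$ is determined by the original sequence). Writing $\dv A_{k_j} = X_{k_j}\,d\nu_{k_j}$ with $|X_{k_j}|=1$ for $\nu_{k_j}$-a.e.\ $x$, and testing the functional $L_{k_j}$ from \eqref{complete} against $\varphi(x,z)=h(x)|(z,v)|$ with $h\in C^0(\T^n)$, I obtain
\[
|(\dv A_{k_j},v)|(h) \;=\; L_{k_j}(\varphi) \;\longrightarrow\; \int_{\T^n} h(x)\,g_v(x)\,d\nu(x),\qquad g_v(x)\doteq\int_{\mathbb{S}^{n-1}}|(z,v)|\,d\nu_x(z).
\]
Hence the weak-$*$ limit of $|(\dv A_{k_j},v)|$ must be $\alpha = g_v\,d\nu$.

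Next, the diffuseness assumption $\alpha(\{x\})=0$ for all $x\in\T^n$ forces $g_v(x_i)\,\nu(\{x_i\})=0$ at every atom $x_i$ of $\nu$. Since the atoms $\{x_i\}$ of $\nu^s$ given by Theorem \ref{intro:charact_singular_part} are in particular atoms of $\nu$ with $\nu(\{x_i\})\ge\nu^s(\{x_i\})>0$, I get $g_v(x_i)=0$, i.e.\ the probability measure $\nu_{x_i}$ is supported on $v^\perp\cap\mathbb{S}^{n-1}$ for every such $i$.

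To conclude I would use the family of matrices
\[
P_t \doteq t^{n-1}\,vv^T + t^{-1}\bigl(\Id - vv^T\bigr),\qquad t>0,
\]
which lies in $\mathrm{SL}(n)$ since $\det P_t = t^{n-1}\cdot t^{-(n-1)}=1$. Any $z\in v^\perp\cap\mathbb{S}^{n-1}$ satisfies $|P_tz|=t^{-1}$, so $\int_{\mathbb{S}^{n-1}}|P_tz|\,d\nu_{x_i}(z)=t^{-1}$ at each atom. Plugging this into Corollary \ref{cor1} yields
\[
\mu^s \;\le\; C(n)\,t^{-\frac{n}{n-1}}\sum_{i=1}^\infty \nu^s(\{x_i\})^{\frac{n}{n-1}}\delta_{x_i},
\]
and the series on the right is finite because $\nu^s(\{x_i\})^{n/(n-1)}\le \nu^s(\T^n)^{1/(n-1)}\,\nu^s(\{x_i\})$ and $\nu^s$ is a finite measure. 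Letting $t\to\infty$ gives $\mu^s\equiv 0$.

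The only delicate point I foresee is the identification $\alpha = g_v\,d\nu$ in the second step, which requires the freedom to feed $\varphi$ depending on the direction of $\dv A_k$ into the complete convergence framework. Once this is in place, the argument is essentially a one-parameter anisotropic rescaling combined with Corollary \ref{cor1}.
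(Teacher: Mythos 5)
Your proposal is correct and follows essentially the same route as the paper's proof: identify $\alpha = g_v\,d\nu$ via complete convergence, deduce $\spt(\nu_{x_i})\subset v^\perp$ at each atom from diffuseness of $\alpha$, and crush the bound in Corollary \ref{cor1} with a one-parameter anisotropic family in $\mathrm{SL}(n)$. Your $P_t$ is precisely the paper's $P_a$ under the reparametrization $t = a^{-1}$, and the small addition you make (checking $\sum_i \nu^s(\{x_i\})^{n/(n-1)}<\infty$) is a minor clarification rather than a change of method.
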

\begin{proof}
We will not relabel subsequences. Extract a further subsequence and assume that $\{\dv A_k\}_k$ converges completely in the sense of Definition \ref{cc} to $(\{\nu_x\}_{x \in \T^n},\nu)$. Notice that if $\dv A_k = X_kd\nu_k$, then
\[
(\dv A_k, v) = (X_k,v)d\nu_k
\]
and hence $|(\dv A_k, v)| = |(X_k,v)|d\nu_k.$ As $\{ \dv A_k\}_{k}$ converges completely to $(\{\nu_x\}_{x \in \T^n},\nu)$, we see that
\[
|(\dv A_k, v)| \weak  f\nu, \quad \text{where } f(x) = \int_{\mathbb{S}^{n-1}}|(z,v)|d\nu_x.
\]
By uniqueness of the limit, $f \nu = \alpha$. The assumption that $\alpha$ is diffuse tells us that 
\[
\nu(\{x_i\}) > 0 \Rightarrow \int_{\mathbb{S}^{n-1}}|(z,v)|d\nu_{x_i}(z) = 0.
\]
Therefore, $\spt(\nu_{x_i}) \subset v^\perp$, for all $\text{$x_i$ s.t. } \nu(\{x_i\}) > 0.$ Complete $v$ to an orthonormal basis of $\R^n$,  say $\{v_1,\dots, v_n\}$,  with $v_1 = v$. For any $a>0$, set $P_a \in \text{SL}(n)$ to be
\[
P_a = a^{-(n-1)} v_1\otimes v_1 + a\sum_{i = 2}^n v_i\otimes v_i = a^{-(n-1)}v\otimes v +  a\pi_{v^\perp},
\]
where $\pi_{v^\perp}$ is the orthogonal projection on $v^\perp$. Let $x_i$ be such that $\nu(\{x_i\}) > 0$. Then
\begin{equation}\label{estimatz}
\int_{\mathbb{S}^{n-1}}|P_az|\,d\nu_{x_i} \le a^{-(n-1)}\int_{\mathbb{S}^{n-1}}|(z,v)|\,d\nu_{x_i} + a\int_{\mathbb{S}^{n-1}}|\pi_{v^\perp}z|\,d\nu_{x_i} = a\int_{\mathbb{S}^{n-1}}|\pi_{v^\perp}z|\,d\nu_{x_i} \le a,
\end{equation}
since $\nu_{x_i}$ is a probability measure for all $i$. Now use Corollary \ref{cor2} to find, for all $a > 0$, in the sense of measures,
\[
\mu^s\leq C(n) \sum_{i=1}^\infty \left(\int_{\mathbb{S}^{n-1}}|P_az|d\nu_{x_i}\right)^\frac{n}{n - 1}\nu^s(\{x_i\})^\frac{n}{n-1}\delta_{x_i} \overset{\eqref{estimatz}}{\le}  a^\frac{n}{n - 1}C(n) \sum_{i=1}^\infty \nu^s(\{x_i\})^\frac{n}{n-1}\delta_{x_i}.
\]
This finally shows $\mu^s \equiv 0$ by letting $a\rightarrow 0$.
\end{proof}

We now  prove the last of the corollaries, i.e. Corollary \ref{cor3}. As said, this method was introduced by Lions in \cite{PLL1,PLL2,PLL3,PLL4}. A beautiful application of the method is provided by its use in the study of possible lack of strong compactness in $L^{p^*}(\T^n)$ for equibounded sequences in $W^{1,p}(\T^n)$. We refer the reader to \cite[Thm. 1.4.2]{EVAWEAK} or \cite{PLL3} for an illustration of this phenomenon. Here, $p^* = \frac{pn}{n  - p}$ for $1\le p < n$ is the Sobolev exponent. First, let us remark that, given an equibounded sequence $\{u_k\}\subset W^{1,p}(\T^n)$, we may consider the matrix field $A_k \in X_{\frac{n}{n - 1}}$ given by
\begin{equation}\label{AK}
A_k = |u_k|^{\frac{n - 1}{n - p}p}\Id.
\end{equation}
Set $\eta = \frac{n - 1}{n - p}p$. Then we have
$\dv A_k \in L^1(\T^n;\R^n)$ with
$$
\dv A_k = \eta |u_k|^{\eta - 2}u_kDu_k \quad \text{ and } \quad
 |\dv A_k| = \eta |u_k|^{\eta - 1}|Du_k| dx.
$$
Employing Corollary \ref{intro:full_USC}, it is not hard to obtain again the known concentration-compactness result for measures generated by the sequence $\{|u_k|^{p^*}\}_{k \in \N}$. We omit the details. However, a direct consequence of Corollary \ref{cor2} is Corollary \ref{cor3}, which we believe is new. Roughly speaking,  Corollary \ref{cor3} tells us that if the $L^p$ norm of any directional derivative of the sequence does not create Dirac Deltas in the limit, then $\{|u_k|^{p^*}\}_{k \in \N}$ does not concentrate, and thus it converges strongly.
\begin{proof}[Proof of Corollary \ref{cor3}]
As usual, we will not relabel subsequences. Consider $A_k =  |u_k|^{\eta}\Id_n$ as in \eqref{AK}. Then,
\[
|(\dv A_k,v)| = \eta |u_k|^{\eta - 1}|(Du_k,v)|.
\]
Passing to further subsequences, we can assume that $|(\dv A_k,v)| \weak \alpha$. Thus, for any $x \in \T^n$ and $r\in (0,1)$, by H\"older inequality for $\frac{1}{p}+\frac{1}{p'}=1$, we get 
\[
\int_{B_r(x)}|(\dv A_k,v)|dy = \eta \int_{B_r(x)} |u_k|^{\eta - 1}|(Du_k,v)|dy \le \eta\left(\int_{B_r(x)} |u_k|^{p^*}dy\right)^\frac{1}{p'}\left(\int_{B_r(x)} |(Du_k,v)|^{p}dy\right)^\frac{1}{p}.
\]
Thus, for almost all $r \in (0,1)$,
\[
\alpha(B_r(x)) \le C \gamma^{\frac{1}{p}}(B_r(x)).
\]
Now assumption \eqref{dirder} and Corollary \ref{cor2} show us that $\mu^s \equiv 0$, since $\det^{\frac{1}{n-1}} A_k=|u_k|^{p^*} \weak gdx + \mu^s $ by our assumptions. Thus, 
$$
\mu \doteq \text{w*-}\lim_{k\rightarrow \infty}\text{det}^{\frac{1}{n-1}} A_k= g\,dx.
$$
Then, lower semicontinuity of $L^q$ norms shows that $|u|^{p^*} \le g$ a.e. in $\T^n$, while the upper semicontinuity of Theorem \ref{introleb} shows $g \le |u|^{p^*}$. Thus, $g = |u|^{p^*}$ and now Br\'{e}zis-Lieb Lemma \cite{BRL} concludes the proof.
\end{proof}

\section{Conditional Hardy Regularity of $\det^\frac{1}{n-1}(A)$}\label{hardy}

For every function (or more generally a measure) $h$ we will denote by $Mh$ its maximal function, i.e.
$$
Mh(x) = \sup_{0 <R <1}\frac{1}{R^n}\int_{B_R(x)}|h|(y)\,dy.
$$
In this section we will give the proofs of Theorem \ref{intro:det_hardy}, Proposition \ref{counter} concerning the sharpness of Theorem \ref{intro:det_hardy}, or equivalently the failure of any possible conditional upper semicontinuity of $\D$ in the case $p<\frac{n}{n-1}$ with additional hypothesis on $\dv A_k$, and finally the improved strong convergence of Corollary \ref{intro:hardystrong}.

\subsection{Proof of Theorem \ref{intro:det_hardy}}

We can assume that $A$ is compactly supported in a ball $B \subset \R^n$. The estimate on $\T^n$ follows by applying this result on $\varphi A$, where $\varphi$ is a non-negative smooth cut-off of a ball $B$ with $[0,1]^n \subset B$. Set $f(x) \doteq {\det}^{\frac{1}{n-1}}(A)(x)$. According to \cite[Lemma 3]{MULDETPOS}, which is based on \cite{STE}, if $h$ is supported in $B$, then
\begin{equation}\label{iff}
\int_{B}h(x)\log(1 + h(x))\, dx < + \infty\quad  \Leftrightarrow \quad Mh \in L^1(B),
\end{equation}
with the estimate
\begin{equation*}
\int_{B}h(x)\log(1 + h(x)) \,dx \le c(B,\|Mh\|_{L^1(B)}).
\end{equation*}
Thus, it only suffices to show that
$Mf \in L^1(B)$.
We can start from \eqref{useful} for $A' = A$, and divide both sides by $R^n$ to find
\begin{equation}\label{usefulresc}
\fint_{B_R(x)}f\,dy \le C\left(\fint_{B_{2R}(x)}|A|\,dy\right)^\frac{n}{n -1} + C\left(\frac{|\dv(A)|(B_{2R}(x))}{(2R)^{n-1}}\right)^\frac{n}{n - 1},
\end{equation}
which implies, recalling the definition of $\tilde M$ from \eqref{gencond},
\[
\fint_{B_R(x)}f\,dy \le C M(|A|)^\frac{n}{n - 1}(x) + C \tilde M (|\dv A|)^\frac{n}{n - 1}(x), \quad \forall R \in E,
\]
where, we recall, $E \subset (0,1)$ is the set of $R$ such that
$|\dv A|(\partial B_{2R}(x)) = 0$.
Since $E$ is dense, we conclude
\[
Mf(x) \le C M(|A|)^\frac{n}{n - 1}(x) + C \tilde M (|\dv A|)^\frac{n}{n - 1}(x).
\]
Thus, by integrating over all $B$ we find
\[
\|Mf\|_{L^1(B)} \le C\left(\|M(|A|)\|^\frac{n}{n-1}_{L^\frac{n}{n - 1}(B)} + \| \tilde M (| \dv A|) \|^\frac{n}{n-1}_{L^\frac{n}{n - 1}(B)}\right).
\]
It is a well know result that $\forall p>1$ the maximal operator $M:L^p\rightarrow L^p$ is bounded, i.e. $\| M h\|_{L^p}\leq C \|h\|_{L^p}$,
and hence we get the desired estimate in the case of compactly supported matrix-fields. This concludes the proof.

\qed

\begin{rem}\label{vs}
Let us compare the result of Corollary \ref{intro:full_USC} with the one of Theorem \ref{intro:det_hardy}. Given a sequence $\{A_k\}_k \subset X_\frac{n}{n - 1}$ such that $A_k \rightharpoonup A$ whose divergence satisfies \eqref{gencond} uniformly in $k \in \N$, i.e.
\begin{equation}\label{MAk}
\sup_k \left\|\tilde M (|\dv A_k|)\right\|_{L^\frac{n}{n - 1}(\T^n)} \le C,
\end{equation}
we know by Theorem \ref{intro:det_hardy} that \eqref{llogl} holds uniformly in $k$. In particular, the sequence $\{{\det}^\frac{1}{n-1}(A_k)\}_k$ is bounded in $\mathcal{H}^1(\T^n)$ and thus it is equi-integrable. Now Theorem \ref{introleb} allows us to conclude weak upper semicontinuity of $\D(\cdot)$ along $\{A_k\}_k$. On the other hand, by Corollary \ref{intro:full_USC} it is sufficient to require that $\nu^s$ is diffuse in order for the weak upper semicontinuity to hold. Recall that $\nu^s$ is the singular part of the weak-$*$ limit of $\{|\dv A_k|\}_{k}$. The requirement that $\nu^s$ is diffuse seems to be weaker than \eqref{MAk} (see for instance the sufficient condition given in Proposition \ref{p:tildeMdivA}), and hence the two results of Corollary \ref{intro:full_USC} and Theorem \ref{intro:det_hardy} are expressing related but different properties of weakly convergent sequences in $X_{\frac{n}{n - 1}}$: on the one hand Corollary \ref{intro:full_USC} implies the upper semicontinuity if $\nu^s$ has no atoms, while the stronger assumptions in Theorem \ref{intro:det_hardy} yield the stronger conclusion that the  sequence $\{{\det}^\frac{1}{n-1}(A_k)\}_k$ is bounded in $\mathcal{H}^1(\T^n)$, which in particular implies the upper semicontinuity of $\D$.
 \end{rem}

\subsection{Failure of USC and Hardy regularity if $p<\frac{n}{n-1}$}

Here we prove that in the subcritical case $p<\frac{n}{n-1}$ no additional hypothesis on the divergence of $\{A_k\}_k$ can provide the weak upper semicontinuity of the functional $\D$, and hence if $p < \frac{n}{n - 1}$ the pointwise estimate \eqref{hest} is indeed optimal, independently on how regular $\{\dv A_k\}_k$ is. We will then comment on how this also yields that for $p < \frac{n}{n - 1}$ estimate \eqref{llogl} is in general false, see Remark \ref{exp}.
\begin{prop}\label{counter}
Let $p<\frac{n}{n-1}$. There exists a sequence $\{A_k\}_{k\in \N}\subset X_p$ and $A\in X_p$ with the following properties:
\begin{itemize}
\item[(i)]  $\dv A_k =0$ for every $k\in\N$;
\item[(ii)] $A_k\rightarrow A$ in $L^p(\T^n)$;
\item[(iii)] $\limsup_{k\rightarrow \infty} \mathbb{D} (A_k)=\lim_{k\rightarrow \infty} \mathbb{D} (A_k)> \mathbb{D} (A)$.
\end{itemize}
\end{prop}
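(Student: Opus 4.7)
The plan is to exhibit an explicit concentrating sequence built from smoothings of $\cof D^2|x-x_0|$ placed at a single point $x_0\in\T^n$ and patched to a constant matrix outside a fixed ball, so as to evade the standard obstruction (obtained by testing $\dv A=0$ against the vector field $u(x)=x$) to a non-trivial, compactly supported, divergence-free PSD field on $\R^n$. The Monge--Amp\`ere mass of $|x|$ concentrated at the origin will supply the gap $\D(A_k)>\D(A)$, while strong $L^p$ convergence is made possible precisely by the subcritical scaling $p<\tfrac{n}{n-1}$.

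Fix $x_0\in\T^n$ and $r>0$ with $\overline{B_r(x_0)}\subset\T^n$, and let $\varphi_\varepsilon\in C^\infty(\R^n)$ be a smooth convex regularization with $\varphi_\varepsilon(y)=|y|$ for $|y|\ge\varepsilon$. Define
\[
A_\varepsilon(x)\doteq\begin{cases}\cof D^2\varphi_\varepsilon(x-x_0),&x\in B_r(x_0),\\ r^{-(n-1)}\Id_n,&x\in\T^n\setminus B_r(x_0),\end{cases}
\]
set $A_k\doteq A_{\varepsilon_k}$ for any $\varepsilon_k\downarrow 0$, and let $A$ be the matrix field obtained from the same formula with $\varepsilon=0$ (explicitly, $A(x)=|x-x_0|^{-(n-1)}\,\widehat{(x-x_0)}\otimes\widehat{(x-x_0)}$ on $B_r(x_0)\setminus\{x_0\}$ and $r^{-(n-1)}\Id_n$ outside). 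Positivity is immediate on each piece; divergence-freeness on each piece follows from the classical null-Lagrangian identity $\dv\cof D^2\varphi\equiv 0$ (inner) and from constancy (outer). The essential check is the interface $\partial B_r(x_0)$: since $\varepsilon<r$ gives $\varphi_\varepsilon(y)=|y|$ near $|y|=r$, and a direct computation shows $\cof D^2|y|=|y|^{-(n-1)}\hat y\otimes\hat y$, both inner and outer normal traces on $\partial B_r(x_0)$ equal $r^{-(n-1)}\nu$ (with $\nu$ the outward unit normal of $B_r(x_0)$), so no singular mass is produced and $\dv A_k\equiv 0$ on $\T^n$. An analogous normal-trace computation on shrinking spheres $\partial B_\delta(x_0)$, combined with $A\in L^1(\T^n)$, gives $\dv A\equiv 0$ on $\T^n$ as well.

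For integrability and strong $L^p$ convergence, one uses $|A_\varepsilon|\le C|x-x_0|^{-(n-1)}$ on $B_r(x_0)\setminus B_\varepsilon(x_0)$ and $|A_\varepsilon|\le C\varepsilon^{-(n-1)}$ on $B_\varepsilon(x_0)$. Direct integration gives
\[
\|A_\varepsilon\|_{L^p(\T^n)}^p\le C\Big(1+\int_\varepsilon^r\rho^{\,n-1-(n-1)p}\,d\rho+\varepsilon^{\,n-(n-1)p}\Big),
\]
uniformly bounded precisely because $p<\tfrac{n}{n-1}$; and since $A_\varepsilon=A$ outside $B_\varepsilon(x_0)$ while both $\|A_\varepsilon\|_{L^p(B_\varepsilon)}$ and $\|A\|_{L^p(B_\varepsilon)}$ are $\le C\varepsilon^{(n-(n-1)p)/p}\to 0$, strong convergence $A_k\to A$ in $L^p(\T^n)$ follows. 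Finally, using the elementary identity $\det^{1/(n-1)}\cof M=\det M$ on $\Sym^+(n)$, the pointwise relation $n\det D^2\varphi_\varepsilon=\dv\bigl(\cof(D^2\varphi_\varepsilon)\nabla\varphi_\varepsilon\bigr)$ (itself a consequence of $\dv\cof D^2\varphi_\varepsilon\equiv 0$), and the boundary values $\nabla\varphi_\varepsilon=\hat y$, $\cof D^2\varphi_\varepsilon=\varepsilon^{-(n-1)}\hat y\otimes\hat y$ on $\partial B_\varepsilon(0)$, integration by parts yields
\[
\int_{B_r(0)}\det D^2\varphi_\varepsilon(y)\,dy=\omega_n\qquad\forall\,\varepsilon<r.
\]
Since $\det D^2|x-x_0|=0$ a.e.\ on $B_r(x_0)\setminus\{x_0\}$ and the outer contributions to $\D(A_\varepsilon)$ and $\D(A)$ are identical, one obtains $\D(A_k)-\D(A)=\omega_n>0$ for every $k$, giving (iii) with $\limsup=\lim$.

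The main subtlety is the interface-matching computation, which forces the prefactor $r^{-(n-1)}$ in the outer field and is what allows the construction to evade the compactly-supported obstruction; everything else is careful bookkeeping tightly calibrated to the critical scaling exponent $\tfrac{n}{n-1}$.
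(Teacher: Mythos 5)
Your proof is correct, and while it is philosophically aligned with the paper's (concentrate the Monge--Amp\`ere Dirac mass of a conical convex potential at a point), the construction is genuinely different. The paper takes $A_k=\cof D^2 f_k$ for the one--parameter family of globally convex $W^{2,p}_{\loc}$ potentials $f_k(x)=|x|^{1+1/k}+\frac{1-k}{2k}$ for $|x|\le 1$ spliced $C^1$ to a quadratic for $|x|>1$; the key output is the completely explicit scalar $\D(A_k)=\big(\tfrac{k+1}{k}\big)^n|\T^n|$ versus $\D(A)=|\T^n\setminus B_1|$. You instead keep the singular profile $|x|$ frozen and regularize it only in a shrinking ball $B_\varepsilon$ with a smooth convex $\varphi_\varepsilon$ that agrees with $|\cdot|$ exactly for $|y|\ge\varepsilon$, and patch $\cof D^2\varphi_\varepsilon$ across $\partial B_r(x_0)$ to the constant field $r^{-(n-1)}\Id_n$ --- the matching works because $\cof D^2|y|=|y|^{-(n-1)}\hat y\otimes\hat y$ has normal trace $r^{-(n-1)}\nu$ on $\partial B_r$, coinciding with that of $r^{-(n-1)}\Id_n$, so no interfacial singular mass is created. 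The fact that $A_\varepsilon\equiv A$ outside $B_\varepsilon(x_0)$ then makes the strong $L^p$ convergence immediate (under the subcritical scaling $p<\tfrac{n}{n-1}$), and the gap $\D(A_\varepsilon)-\D(A)=\omega_n$ drops out of a boundary-flux identity rather than from an explicit integral of $\det D^2 f_k$. Two small points of bookkeeping: you integrate by parts using boundary data on $\partial B_\varepsilon$ but state the conclusion for $\int_{B_r}\det D^2\varphi_\varepsilon$ --- this is fine, since $\det D^2|y|\equiv 0$ on $B_r\setminus B_\varepsilon$, but it reads as a mismatch; and the existence of a smooth convex $\varphi_\varepsilon$ that agrees with $|y|$ \emph{exactly} on $\{|y|\ge\varepsilon\}$ (as opposed to a plain mollification, which does not have this property) deserves a sentence, e.g.\ taking $\varphi_\varepsilon(y)=\varepsilon\,h(|y|/\varepsilon)$ with $h$ smooth, even-flat at $0$, convex increasing, $h(t)=t$ for $t\ge 1$. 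Both approaches are correct; yours isolates the ``singular mass equals $\omega_n$'' phenomenon more transparently, while the paper's gives a global convex potential and cleaner explicit constants.
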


\begin{proof}
We will construct the sequence $\{A_k\}_{k\in \N}$ by adapting the functions $f_\alpha$ constructed in \cite[Lemma 3]{DRT} with $\alpha=\frac{1}{k}\rightarrow 0$. Identify $\T^n$ with the cube $[-2,2]^n\subset \R^n$. Define the convex function $f_k:\R^n\rightarrow \R$ by

\[
f_k(x)\doteq
\begin{cases}
|x|^{1 + \frac{1}{k}} + \frac{1-k}{2k}, &\text{ if } |x| \le 1,\\
\frac{1+k}{2k}|x|^2, &\text{ if } |x| > 1.
\end{cases}
\]
From \cite[Lemma 4]{DRT} we have that $f_k\in W^{2,p}_{\loc}(\R^n)$ for all $k\in \N$ if $p<n$ and moreover its distributional Hessian is given by 
$$
D^2f_k(x)\doteq
\begin{cases}
\frac{1+k}{k}\left(|x|^{\frac{1}{k} - 1}\Id_n + (\frac{1}{k} - 1)|x|^{\frac{1}{k} -3}x\otimes x\right), &\text{ if } |x| < 1,\\
 \frac{1+k}{k}\Id_n, &\text{ if }|x| > 1.
\end{cases}
$$
Since the value $D^2f_k$ does not depend on $x$ for $|x| > 1$, we have that the matrix $A_k\doteq\cof D^2f_k$ defines a sequence of periodic, divergence-free and positive definite matrices in $X_p$. This in particular proves $(i)$. To show the strong convergence of $A_k$ in $L^p$ we note that 
$$
D^2f_k(x)\rightarrow
\begin{cases}
\frac{\Id_n }{|x|}-\frac{x\otimes x}{|x|^3}, &\text{ if } |x| < 1,\\
 \Id_n, &\text{ if } |x| > 1.
\end{cases}
$$
for almost every $x\in \T^n$. Moreover we can bound, uniformly in $k$, $|A_k(x)|\lesssim |D^2f_k|^{n-1} \leq |x|^{-(n-1)}\in L^p(\T^n)$, from which, by the dominated convergence theorem we conclude the validity of $(ii)$ where 
$$
A(x)=
\begin{cases}
\cof \left( \frac{\Id_n }{|x|}-\frac{x\otimes x}{|x|^3}\right), &\text{ if } |x|< 1,\\
 \Id_n, &\text{ if } |x| > 1.
\end{cases}
$$
We are now only left to show $(iii)$. We start by recalling that 
\begin{equation}\label{matrix_determinant_lemma}
\det (B+C)=\det B +\langle C, \cof^T B\rangle,
\end{equation}
for every $B,C\in \R^{n\times n}$, $\rank C=1$. This gives that 
\begin{align*}
\text{det}^{\frac{1}{n-1}} \cof  \left( \frac{\Id_n }{|x|}-\frac{x\otimes x}{|x|^3}\right)=\det \left( \frac{\Id_n }{|x|}-\frac{x\otimes x}{|x|^3}\right)=\frac{1}{|x|^n}-\left( \frac{x\otimes x}{|x|^3}, \frac{\Id_n}{|x|^{n-1}}\right)=0,\quad \text{for a.e.  } x,
\end{align*}
from which we can compute 
$$
\D(A)=\int_{\T^n} \text{det}^{\frac{1}{n-1}} A(x)\,dx=\int_{\T^n\setminus B_1}\det \Id_n\,dx=|\T^n\setminus B_1|.
$$
On the other hand, by using again \eqref{matrix_determinant_lemma},
\begin{align*}
\D(A_k)&=\int_{B_1}\det D^2f_k(x)\,dx +\int_{\T^n\setminus B_1} \det \left(  \frac{1+k}{k}\Id_n \right) \,dx\\
&= \left( \frac{1+k}{k} \right)^n \left[ \int_{B_1} \left( \det\left( \frac{\Id_n}{|x|^{1-\frac{1}{k}}}\right)+\left(\frac{1}{k}-1 \right)\frac{1}{|x|^{n\left(1-\frac{1}{k}\right)}} \right) \,dx+|\T^n\setminus B_1| \right]\\
&=\left( \frac{1+k}{k} \right)^n\left[\frac{1}{k}\int_{B_1}\frac{1}{|x|^{n\left(1-\frac{1}{k}\right)}}\,dx  +|\T^n\setminus B_1|\right]\\
&=\left( \frac{1+k}{k} \right)^n\Big(|B_1|+|\T^n\setminus B_1| \Big) =\left( \frac{1+k}{k} \right)^n|\T^n| .
\end{align*}
By letting $k\rightarrow\infty$ we conclude that 
$$
|\T^n|=\lim_{k\rightarrow \infty}\D(A_k)=\limsup_{k\rightarrow\infty}\D(A_k) >\D(A)=|\T^n\setminus B_1|.
$$
\end{proof}

\begin{rem}\label{exp}
The counterexample of Proposition \ref{counter} shows that one cannot hope to have $\{{\det}^\frac{1}{n-1}(A_k)\}_k$  bounded in $\mathcal{H}^1(\T^n)$  if $p < \frac{n}{n- 1}$, even if the divergence of the matrix field $A \in X_p$ is zero. Indeed, if such an estimate were true, then considering again the sequence $\{A_k\}_k$ constructed in Proposition \ref{counter}, we would find that the sequence $\{{\det}^\frac{1}{n-1}(A_k)\}_k$ is equi-integrable. Since $A_k$ converges to $A$ pointiwse a.e., by Vitali's convergence theorem we would then find $\lim_k\D(A_k) = \D(A)$, which cannot be true in view of  Proposition \ref{counter}-$(iii)$. 
\end{rem}

\begin{rem}\label{rem_hardy_crit}
The counterexample to the upper semicontinuity in the critical case $p=\frac{n}{n-1}$ we gave in \cite{USC}*{Prop. 8} shows that the Hardy regularity of ${\det}^\frac{1}{n-1}(A)$ from Theorem \ref{intro:det_hardy} cannot hold without the additional assumption \eqref{gencond} on the divergence.  Indeed, letting $\{A_k\}_k$ the $L^\frac{n}{n-1}(\T^n)$ bounded sequence of \cite{USC}*{Prop. 8}, and by reasoning exactly as in the above remark, we deduce that it is not possible to expect a uniform estimate of $\{{\det}^\frac{1}{n-1}(A_k)\}_k$ in $\mathcal{H}^1(\T^n)$, since it would imply continuity of $\D$ by Vitali's convergence theorem.  Note that the sequence $\{A_k\}_k$ of \cite{USC}*{Prop. 8} displays a Dirac Delta in the weak limit of $\{|\dv A_k|\}_k$,  which in particular shows that the assumption \eqref{gencond} is essentially sharp, since measures $\nu$ with an isolated atom fail to satisfy  $\tilde M \nu\in L^\frac{n}{n-1}(\T^n)$. See also the sufficient condition \eqref{mus_ball} given below.
\end{rem}

\subsection{Proof of Corollary \ref{intro:hardystrong}}

By Theorem \ref{intro:det_hardy} we get that the sequence $\left\{\det^{\frac{1}{n-1}}(A_k)\right\}_{k\in \N}$ is bounded in $\mathcal{H}^1(\T^n)$. Since $\{A_k\}_{k\in \N}\subset  C_\lambda$ we deduce that $f_k(x)\doteq |A_k(x)|^\frac{n}{n-1}$ is  bounded in $\mathcal{H}^1(\T^n)$ too. It follows that $\{f_k\}_{k}$ is a sequence of equi-integrable functions. This information, combined with the pointwise convergence yields by Vitali's convergence theorem the strong convergence of $A_k$ to $A$ in $L^\frac{n}{n-1}(\T^n)$. \qed

\appendix

\section{Radon Measures}\label{AppA}

The following is taken from \cite[Thm. 5.8]{MAG} and (the proof of) \cite[Thm. 2.17]{PMA}.

\begin{Teo}\label{RNdec}
Given two Radon measures $\alpha$, $\beta$ on $\R^n$, there exists a decomposition $\alpha = fd\beta + \alpha^s$, where $\alpha^s \perp \beta$ and 
\begin{itemize}
\item $f \in L^1(\R^n;\beta)$, and for $\beta$ a.e. $x \in \R^n$,
\[
f(x) = \lim_{r \to 0^+}\frac{\alpha(\overline{B_r(x)})}{\beta(\overline{B_r(x)})};
\]
\item $\alpha^s$ is concentrated on the set 
\[
E = \{x \in \R^n: x \notin \spt(\beta)\}\cup \left\{x \in \spt(\beta): \liminf_{r \to 0^+}\frac{\alpha(\overline{B_r(x)})}{\beta(\overline{B_r(x)})} = + \infty\right\}.
\]
\end{itemize}
\end{Teo}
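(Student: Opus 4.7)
The plan is to combine the classical Radon--Nikodym theorem with the Besicovitch differentiation theorem for general Radon measures. The proof naturally splits into producing the decomposition, identifying the density via a symmetric derivative, and characterizing the concentration set of the singular part.

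First, I would invoke the standard Lebesgue decomposition (a direct consequence of the Radon--Nikodym theorem) to write $\alpha = \alpha^{ac} + \alpha^s$, where $\alpha^{ac} \ll \beta$ and $\alpha^s \perp \beta$. Radon--Nikodym then yields a function $f \in L^1(\R^n;\beta)$ with $\alpha^{ac} = f\,d\beta$. This already produces the decomposition $\alpha = f\,d\beta + \alpha^s$ asserted in the statement; the nontrivial content now lies in identifying $f$ through the symmetric derivative and in pinning down the carrier of $\alpha^s$.

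Next, I would identify the density $f$ using the Besicovitch differentiation theorem. Applied to $\alpha^{ac}$ with respect to $\beta$, it yields that for $\beta$-a.e. $x$
\[
f(x) = \lim_{r \to 0^+}\frac{\alpha^{ac}(\overline{B_r(x)})}{\beta(\overline{B_r(x)})}.
\]
Applied to $\alpha^s$ with respect to $\beta$, the same theorem yields that the symmetric derivative of $\alpha^s$ with respect to $\beta$ vanishes $\beta$-a.e.\ , i.e.
\[
\lim_{r\to 0^+}\frac{\alpha^s(\overline{B_r(x)})}{\beta(\overline{B_r(x)})} = 0 \quad \text{for $\beta$-a.e.\ } x.
\]
Adding the two ratios recovers exactly the limiting formula of the first bullet. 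The technical engine behind these differentiation statements is a weak-$(1,1)$ bound for the centered maximal function associated to $\beta$; since $\beta$ is not assumed to be doubling, this step forces the use of Besicovitch's covering theorem in place of the Vitali covering lemma, and this is the main technical obstacle in the proof.

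Finally, I would identify the set on which $\alpha^s$ concentrates. By singularity there exists a Borel set $N$ with $\beta(N) = 0$ and $\alpha^s(\R^n\setminus N) = 0$. If $x \notin \spt(\beta)$ then $\beta(\overline{B_r(x)}) = 0$ for all sufficiently small $r$, so such points already lie in $E$; it remains to control $\alpha^s$-a.e.\ point of $N \cap \spt(\beta)$. For this I would reverse the roles of $\alpha^s$ and $\beta$: since $\beta \perp \alpha^s$, a further application of Besicovitch differentiation, this time of $\beta$ with respect to $\alpha^s$, shows that for $\alpha^s$-a.e.\ $x$
\[
\lim_{r\to 0^+}\frac{\beta(\overline{B_r(x)})}{\alpha^s(\overline{B_r(x)})} = 0,
\]
which is equivalent to $\liminf_{r\to 0^+}\alpha^s(\overline{B_r(x)})/\beta(\overline{B_r(x)}) = +\infty$ and hence, by the monotonicity $\alpha \ge \alpha^s$, forces $\liminf_{r\to 0^+}\alpha(\overline{B_r(x)})/\beta(\overline{B_r(x)}) = +\infty$ as well. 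This proves $\alpha^s(\R^n \setminus E) = 0$ and concludes the proof.
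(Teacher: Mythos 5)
Your proposal is correct and follows essentially the same route as the argument behind the sources the paper cites for this statement (\cite{MAG}*{Thm. 5.8} and the proof of \cite{PMA}*{Thm. 2.17}): Lebesgue/Radon--Nikodym decomposition combined with Besicovitch differentiation, including the reversed differentiation of $\beta$ with respect to $\alpha^s$ (together with the separate treatment of points outside $\spt(\beta)$) to identify the carrier of the singular part. The paper offers no independent proof, so there is nothing further to compare.
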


We will also need the following result, that we took from \cite[Lemma 4.11]{DLDPHM} which states that every Radon measure $\mu$ is asymptotically doubling in a weak sense $\mu$-a.e..

\begin{prop}\label{asdoub}
Let $\mu$ be a Radon measure on $\R^n$ and set
\[
A_{\mu} \doteq \left\{x \in \spt(\mu): \limsup_{r \to 0^+}\frac{\mu(B_r(x))}{\mu(B_{2r}(x))} \ge 2^{-n}\right\}.
\]
Then $\mu(\R^n \setminus A_\mu) = 0$.
\end{prop}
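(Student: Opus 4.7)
The plan is to argue by contradiction using an iteration-and-covering scheme. Rough strategy: if $\mu(\R^n\setminus A_\mu)>0$, I localize to a subset of the bad set on which the failure of the doubling bound is quantitative and scale-uniform, iterate this failure to force the upper $n$-density $\Theta^{*n}(\mu,\cdot)$ to vanish pointwise on that subset, and then contradict positive $\mu$-measure via Besicovitch's covering theorem.

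\emph{Setup.} Since $\mu(\R^n\setminus\spt(\mu))=0$, I need only show $\mu(\spt(\mu)\setminus A_\mu)=0$. Unpacking the condition $\limsup_{r\to 0^+}\mu(B_r(x))/\mu(B_{2r}(x))<2^{-n}$, I would decompose
\[
\spt(\mu)\setminus A_\mu \;=\;\bigcup_{j,k\in\N} E_{j,k},\qquad E_{j,k}\doteq\Bigl\{x\in\spt(\mu):\mu(B_r(x))\le (2^{-n}-\tfrac1k)\mu(B_{2r}(x))\ \forall\, r\in(0,1/j)\Bigr\}.
\]
If the left-hand side has positive measure, countable additivity produces $j,k$ with $\mu(E_{j,k})>0$; intersecting with a bounded ball (using that $\mu$ is locally finite) gives a bounded set $E\subset E_{j,k}$ with $\mu(E)>0$. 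I then fix $\lambda\doteq 2^{-n}-1/k$, so in particular $2^n\lambda<1$.

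\emph{Iteration.} For every $x\in E$ and $r\in(0,1/j)$, applying the defining inequality of $E_{j,k}$ successively to $r,r/2,r/4,\dots$ (all of which remain in $(0,1/j)$) yields
\[
\mu(B_{r/2^\ell}(x))\le\lambda^\ell\,\mu(B_r(x)),\qquad\forall\,\ell\ge 0.
\]
Interpolating to arbitrary small radii: for $s\in(0,1/j)$, pick $\ell=\ell(s)$ with $1/(j\,2^{\ell+1})\le s<1/(j\,2^\ell)$, so that
\[
\frac{\mu(B_s(x))}{s^n}\;\le\;2^n j^n\,(2^n\lambda)^{\ell(s)}\,\mu(B_{1/j}(x)).
\]
Because $\mu$ is Radon, $\mu(B_{1/j}(x))<\infty$, and because $2^n\lambda<1$ and $\ell(s)\to\infty$ as $s\to 0^+$, this forces $\Theta^{*n}(\mu,x)\doteq\limsup_{s\to 0^+}\mu(B_s(x))/s^n=0$ for every $x\in E$.

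\emph{Covering contradiction.} To contradict $\mu(E)>0$, I fix $\varepsilon>0$ and choose, for each $x\in E$, a radius $r_x>0$ so small that $\mu(B_r(x))\le\varepsilon r^n$ for $0<r\le r_x$ and $B_{r_x}(x)$ lies in a fixed bounded open set $U\supset E$. Besicovitch's covering theorem applied to $\{B_{r_x}(x)\}_{x\in E}$ produces $N=N(n)$ countable families $\mathcal{G}_1,\dots,\mathcal{G}_N$ of pairwise disjoint balls whose union covers $E$, whence
\[
\mu(E)\;\le\;\sum_{i=1}^N\sum_{B\in\mathcal{G}_i}\mu(B)\;\le\;\varepsilon\sum_{i=1}^N\sum_{B\in\mathcal{G}_i}r_B^n\;\le\;\frac{N\,\varepsilon\,|U|}{\omega_n},
\]
where the last inequality uses disjointness inside each $\mathcal{G}_i$ together with $B\subset U$. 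Letting $\varepsilon\to 0^+$ gives $\mu(E)=0$, the desired contradiction. The main obstacle is the dyadic-to-continuous interpolation in the iteration step that upgrades vanishing density along the sequence $r/2^\ell$ to the full $\Theta^{*n}(\mu,x)=0$; once this is secured, everything else is bookkeeping plus Besicovitch, which I plan to invoke as a black box.
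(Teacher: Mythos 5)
The paper does not prove this proposition; it simply cites it as Lemma~4.11 of the reference \cite{DLDPHM}, so there is no in-paper argument to compare against. Your proof is correct and is in fact the standard route to such ``asymptotic doubling'' statements: you first quantify the failure of the doubling bound via the decomposition into the sets $E_{j,k}$ (noting, as a routine but necessary measurability point that you leave implicit, that left-continuity of $r\mapsto\mu(B_r(x))$ lets one restrict the quantifier to rational radii, making each $E_{j,k}$ Borel); the dyadic iteration then yields $\mu(B_{r/2^\ell}(x))\le\lambda^\ell\mu(B_r(x))$ with $\lambda=2^{-n}-\tfrac1k$, and the dyadic-to-continuous interpolation step is handled correctly, giving $\mu(B_s(x))/s^n\le 2^nj^n(2^n\lambda)^{\ell(s)}\mu(B_{1/j}(x))\to 0$ precisely because $2^n\lambda<1$ --- this is where the sharpness of the constant $2^{-n}$ in the statement enters. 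The Besicovitch covering argument then converts $\Theta^{*n}(\mu,x)=0$ on a bounded set of positive $\mu$-measure into a contradiction; just be careful to adjust the radii so that the density bound applies to the closed balls appearing in Besicovitch's theorem, which is immediate from $\mu(\overline{B_r(x)})\le\mu(B_{2r}(x))$. In short: a sound, self-contained proof supplied where the paper only offers a citation.
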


\section{A sufficient condition for the validity of \eqref{gencond} in Theorem \ref{intro:det_hardy}}\label{AppB}

Recall from \eqref{gencond} that, for any positive Radon measure $\nu$ on $\T^n$, we write
\[
\tilde M \nu (x)\doteq\sup_{0 <R <1}\frac{1}{R^{n-1}}\nu(B_R(x)\cap \T^n).
\]

\begin{prop}\label{p:tildeMdivA}
Let $\nu\in \mathcal{M}^+(\T^n)$ be a positive Radon measure. Suppose that $\nu=g\,dx+\nu^s$, where $g\in \mathcal{H}^1(\T^n)$, and $\nu^s\in \mathcal{M}^+(\T^n)$  satisfies 
\begin{equation}\label{mus_ball}
\nu^s(B_r(x))\leq N r^\delta,
\end{equation}
for some $\delta>0$, $\forall r\in (0,1)$ and every $x\in \T^n$, for some universal constant $N>0$. Then
\begin{equation}\label{est:div_in_critical}
\|\tilde M\nu\|_{L^\frac{n}{n - 1}(\T^n)}\leq C_n\|g\|_{\mathcal{H}^1(\T^n)}+ C_{n,\delta}\left(\|\nu^s\|_{\mathcal{M}^+(\T^n)}+N \right).
\end{equation}
\end{prop}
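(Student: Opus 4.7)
Write $\nu = g\,dx + \nu^s$ and exploit subadditivity $\tilde M \nu \leq \tilde M(g\,dx) + \tilde M \nu^s$; it then suffices to estimate each summand in $L^{n/(n-1)}(\T^n)$.

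\textit{Absolutely continuous part.} For every $R \in (0,1)$ one has the two competing bounds
\[ \frac{1}{R^{n-1}}\int_{B_R(x)}|g|\,dy \leq \omega_n R\, Mg(x) \quad \text{and}\quad \frac{1}{R^{n-1}}\int_{B_R(x)}|g|\,dy \leq \frac{\|g\|_{L^1}}{R^{n-1}}, \]
with $M$ the centered Hardy--Littlewood maximal operator. Taking the minimum and optimizing the resulting envelope in $R$ (the critical point being $R^{\ast} = (\|g\|_{L^1}/(\omega_n Mg(x)))^{1/n}$; the edge case $R^{\ast} > 1$ is handled directly, since it forces $Mg(x) \leq \|g\|_{L^1}$ and hence trivially $Mg(x) \leq Mg(x)^{(n-1)/n}\|g\|_{L^1}^{1/n}$) yields the pointwise inequality
\[ \tilde M(g\,dx)(x) \leq C_n\,Mg(x)^{(n-1)/n}\|g\|_{L^1}^{1/n}. \]
Raising to the power $n/(n-1)$ and integrating,
\[ \|\tilde M(g\,dx)\|_{L^{n/(n-1)}}^{n/(n-1)} \leq C_n\,\|g\|_{L^1}^{1/(n-1)}\,\|Mg\|_{L^1}, \]
and the periodic analogue of the Hardy space characterization $\|Mg\|_{L^1(\T^n)} \leq C_n\|g\|_{\mathcal H^1(\T^n)}$, together with $\|g\|_{L^1} \leq \|g\|_{\mathcal H^1}$, closes this piece with $\|\tilde M(g\,dx)\|_{L^{n/(n-1)}} \leq C_n\|g\|_{\mathcal H^1(\T^n)}$.

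\textit{Singular part.} The key observation is that if $\tilde M \nu^s(x) > t$, there exists $R_x \in (0,1)$ with $\nu^s(B_{R_x}(x)) > tR_x^{n-1}$, which combined with \eqref{mus_ball} forces $R_x^{n-1-\delta} < N/t$. If $\delta \geq n-1$, this already yields the pointwise bound $\tilde M \nu^s \leq N$ (the set $\{\tilde M \nu^s > t\}$ is empty for $t > N$), and since $|\T^n| = 1$ one gets $\|\tilde M \nu^s\|_{L^{n/(n-1)}} \leq N$. If instead $\delta < n-1$, the constraint reads $R_x < (N/t)^{1/(n-1-\delta)}$, and a Vitali extraction yields disjoint balls $B_{R_i}(x_i)$ whose $5$-times enlargements cover $\{\tilde M \nu^s > t\}$, so
\[ |\{\tilde M \nu^s > t\}| \leq C_n\sum_i R_i^n \leq C_n \left(\frac{N}{t}\right)^{\frac{1}{n-1-\delta}} \frac{1}{t} \sum_i \nu^s(B_{R_i}(x_i)) \leq C_n\,\|\nu^s\|\,N^{\frac{1}{n-1-\delta}}\,t^{-\frac{n-\delta}{n-1-\delta}}. \]
Plugging this together with the trivial bound $|\{\tilde M\nu^s > t\}| \leq 1$ into the layer-cake formula
\[ \|\tilde M \nu^s\|_{L^{n/(n-1)}}^{n/(n-1)} = \tfrac{n}{n-1}\int_0^\infty t^{1/(n-1)} |\{\tilde M \nu^s > t\}|\,dt \]
and splitting at the balance point leads to $\|\tilde M\nu^s\|_{L^{n/(n-1)}} \leq C_{n,\delta}\, N^{1/(n-\delta)}\,\|\nu^s\|^{(n-1-\delta)/(n-\delta)}$. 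Since the two exponents sum to $1$, Young's inequality absorbs this into $C_{n,\delta}(N + \|\nu^s\|)$, producing \eqref{est:div_in_critical}.

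\textit{Main obstacle.} The delicate regime is $\delta < n-1$, where $\tilde M \nu^s$ generically diverges on $\spt(\nu^s)$ (a Lebesgue null set), so no pointwise control on $\tilde M\nu^s$ is available and integrability must be recovered purely at the distributional level through the Vitali covering plus layer cake; the choice of splitting scale has to be made precisely in order to achieve the linear dependence in $N$ and $\|\nu^s\|$ stated in \eqref{est:div_in_critical} rather than a sub-optimal mixed-power bound.
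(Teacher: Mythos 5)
Your argument is correct in outline and is genuinely different from the paper's: for the absolutely continuous part the paper bounds $\tilde M g\leq I_1 g$ and invokes the endpoint Hardy--Littlewood--Sobolev estimate $\|I_1 g\|_{L^{n/(n-1)}}\leq C\|\mathcal R g\|_{L^1}$, while you interpolate $\tilde M(g\,dx)$ between the Hardy--Littlewood maximal function $Mg$ and $\|g\|_{L^1}$; for the singular part the paper delegates the estimate to a reference in the literature, whereas you give a self-contained Vitali covering plus layer-cake argument. Both parts of your singular-part computation check out (the exponent bookkeeping is right and the bound $|\{\tilde M\nu^s>t\}|\leq C_n N^{1/(n-1-\delta)}\|\nu^s\|\,t^{-(n-\delta)/(n-1-\delta)}$ does yield, after optimizing the splitting scale and applying Young, the linear bound $C_{n,\delta}(N+\|\nu^s\|)$).

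The one place where your write-up quotes a false statement is in the absolutely continuous part: the inequality $\|Mg\|_{L^1(\T^n)}\leq C_n\|g\|_{\mathcal H^1(\T^n)}$ is \emph{not} the Hardy space characterization and is not true for general $g\in\mathcal H^1(\T^n)$. The maximal-function characterization of $\mathcal H^1$ uses the smooth (grand) maximal function $\sup_t|\phi_t\ast g|$, and that quantity is dominated by $Mg$ but does not dominate it. Indeed, take a small mean-zero atom $a_r=r^{-n}(\chi_{B_r^+}-\chi_{B_r^-})$ supported in $B_r$; then $\|a_r\|_{\mathcal H^1}\leq C$, yet for $2r<|x|<1/2$ one has $Ma_r(x)\gtrsim |x|^{-n}$, so that $\|Ma_r\|_{L^1(\T^n)}\gtrsim\log(1/r)$. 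Hence the centered Hardy--Littlewood maximal operator is \emph{not} bounded $\mathcal H^1(\T^n)\to L^1(\T^n)$. Your step is rescued only by noting that $g\geq 0$ almost everywhere (which holds here because $g$ is the Radon--Nikodym density of the absolutely continuous part of the positive measure $\nu$): for nonnegative $g$, choosing a smooth bump $\phi\geq c\chi_{B_1}$ with $\int\phi=1$ gives the pointwise bound $Mg(x)\leq C\sup_{0<t<1}(\phi_t\ast g)(x)$, whence $\|Mg\|_{L^1(\T^n)}\leq C\|g\|_{\mathcal H^1(\T^n)}$. You should state the nonnegativity of $g$ explicitly and justify this step through it, since the general-looking claim you invoked is false. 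With that fix the proof is complete.
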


\begin{proof}
We will prove separately that 
\begin{align}
\| \tilde Mg\|_{L^\frac{n}{n - 1}(\T^n)}&\leq C_n\|g\|_{\mathcal{H}^1(\T^n)}\label{first}\\
\|\tilde M\nu^s\|_{L^\frac{n}{n - 1}(\T^n)}&\leq C_{n,\delta}\left(\|\nu^s\|_{\mathcal{M}^+(\T^n)}+N \right)\label{second}.
\end{align}
We will denote by $I_1$ and $\mathcal{R}$ the Riesz potential and the vector-valued Riesz transform respectively.  More precisely, for every function $h\in L^1(\T^n)$, we set 
\begin{align*}
I_1 h(x)=\int_{\T^n} \frac{h(y)}{|x-y|^{n-1}}\, dy \quad \text{ and } \quad \mathcal{R} h=p.v. \int_{\T^n} \frac{x-y}{|x-y|^{n+1}} h(y)\, dy,
\end{align*}
with the usual straightforward modifications when $h$ is substituted by a measure $\nu$. Note that
$$
\tilde M g(x)=\sup_{R>0}\frac{1}{R^{n-1}}\int_{B_R(x)} g(y)\,dy\leq \sup_{R>0}\int_{B_R(x)} \frac{g(y)}{|x-y|^{n-1}}\,dy=\int_{\T^n} \frac{g(y)}{|x-y|^{n-1}}\,dy=I_1 g(x),
$$
and similarly $\tilde M \nu^s\leq I_1\nu^s$. Thus to prove \eqref{first} and \eqref{second} it will be enough to estimate $I_1 g$ and $I_1\nu^s$. 
By the Hardy--Littlewood--Sobolev inequality we have 
\begin{equation}
\label{Rieszinhardy}
\|I_1 g\|_{L^\frac{n}{n - 1}(\T^n)}\leq C\|\mathcal{R}g\|_{L^1(\T^n)}\leq C \|g\|_{\mathcal{H}^1(\T^n)},
\end{equation}
where in the second inequality we used that $\|g\|_{\mathcal{H}^1}\doteq \|g\|_{L^1}+\|\mathcal{R} g\|_{L^1}$ defines the norm in the Hardy space, see \cite{FEF}.  This directly gives \eqref{first}. Estimate \eqref{second} is well-known, and can be found, for instance, in the proof of \cite[Prop. 6.1]{Sil05}.
\end{proof}

\bibliographystyle{plain}
\bibliography{USC}

\end{document}